\definecolor{duckgreen}{rgb}{0.0,0.309,0.153}
\definecolor{burntorange}{rgb}{0.801,0.332,0.0}
\theoremstyle{plain}
\newtheorem{corollary}{Corollary}
\newtheorem{lemma}{Lemma}
\newtheorem{theorem}{Theorem}
\newtheorem*{thmA}{Theorem A}
\newtheorem*{thmB}{Theorem B}
\newtheorem*{main}{Main Theorem}
\newtheorem{definition}{Definition}
\newtheorem{remark}{Remark}
\theoremstyle{definition}
\newtheorem*{ack}{Acknowledgment}
\numberwithin{equation}{section}
\newcommand{\bP}{\mathbb P}
\newcommand{\bR}{\mathbb R}
\newcommand{\bS}{\mathbb S}
\newcommand{\cM}{\mathcal{M}}
\newcommand{\cN}{\mathcal{N}}
\newcommand{\cO}{\mathcal{O}}
\newcommand{\dt}{\partial_\tau}
\newcommand{\Dt}{\frac{d}{d\tau}}
\newcommand{\dx}{\partial_x}
\newcommand{\dy}{\partial_y}
\newcommand{\dz}{\partial_\theta}
\newcommand{\Dy}{\mathrm{d}y}
\newcommand{\Dz}{\mathrm{d}\theta}
\newcommand{\lp}{\langle}
\newcommand{\ls}{\lesssim}
\newcommand{\lan}{\langle}
\newcommand{\mcf}{\textsc{mcf}}
\newcommand{\mH}{\mathcal H_a}
\newcommand{\mL}{\mathcal L_a}
\newcommand{\opt}{\mathrm{opt}}
\newcommand{\pop}{\phi_{\opt}}
\newcommand{\ran}{\rangle}
\newcommand{\rd}{\mathrm{d}}
\newcommand{\rp}{\rangle_\mu}
\newcommand{\rs}{\rangle_{\bS^1}}
\newcommand{\spec}{\mathrm{spec}}
\newcommand{\ve}{\varepsilon}
\newcommand{\vp}{\varphi}
\newcounter{mnotecount}[section]
\let\oldmarginpar\marginpar
\renewcommand\marginpar[1]{\-\oldmarginpar[\raggedleft\footnotesize #1]%
{\raggedright\footnotesize #1}}
\newcommand{\V}[2]
    {\ifnum0=#1  
        \ifnum1=#2
            (v_{0,1,2})   
        \else
            (v_{0,#2,#2}) 
        \fi
    \else   
        \ifnum0=#2  
            \ifnum1=#1
                (v_{1,0,1})   
            \else
                (v_{#1,0,0})  
            \fi
        \else   
            (v_{#1,#2,#2})
        \fi
    \fi}
\begin{document}
\title[Universality in MCF neckpinches]
{Universality in mean curvature flow neckpinches}

\author{Zhou Gang}
\address[Zhou Gang]{California Institute of Technology}
\email{gzhou@caltech.edu}

\author{Dan Knopf} \address[Dan Knopf]{University of Texas at Austin}
\email{danknopf@math.utexas.edu}
\urladdr{http://www.ma.utexas.edu/users/danknopf/}

\thanks{
	ZG thanks NSF for support in DMS-1308985.
	DK thanks NSF for support in DMS-1205270.}
 
\begin{abstract}
We study noncompact surfaces evolving by mean curvature flow. Without
any symmetry assumptions, we prove that any solution that is $C^3$-close at some time
to a standard neck will develop a neckpinch singularity in finite time, will become
asymptotically rotationally symmetric in a space-time neighborhood of its singular set,
and will have a unique tangent flow.
\end{abstract}

\maketitle

\setcounter{tocdepth}{1}
\tableofcontents

\section{Introduction}

In this paper, we prove, without imposing any symmetry assumptions, that any complete
noncompact two-dimensional solution of mean curvature flow (\mcf) that is close to a standard
round neck at some time will \textsc{(i)} encounter a finite-time singularity, \textsc{(ii)} become
asymptotically rotationally symmetric  in a space-time neighborhood of the developing singularity,
\textsc{(iii)} satisfy an exact asymptotic profile in that neighborhood, and \textsc{(iv)} will as a
consequence have a unique tangent flow. All of these statements are made precise below. This
result extends our previous work on \mcf\ singularities: in that work \cite{GKS11}, we removed
the hypothesis of rotational symmetry but retained certain discrete symmetry hypotheses that
served to fix the limiting cylinder. The results in this paper, combined with significant work of
others, makes it reasonable to expect that singularities of generic \mcf\ solutions may be
constrained to a small selection of ``universal'' asymptotic profiles. Before giving the details,
of our results, we sketch the broad outlines of the emerging picture motivating this expectation.

One says a smooth one-parameter family $\cM^m_t\subset\bR^{m+1}$ of hypersurfaces moves by
\mcf\ if at every point $x\in\cM^m_t$, one has $\partial_t x = -H\nu$, where $H$ is the mean curvature
scalar, and $\nu$ is the unit normal at $x$. Denote the backward heat kernel at $(x_0,t_0)$ by
$\rho_{(x_0,t_0)}(x,t)=\{4\pi (t_0-t)\}^{-m/2}\exp\{-|x-x_0|^2/4(t_0-t)\}$.
For $t<t_0$, Huisken's \mcf\ monotonicity formula \cite{Huisken90} states that
\[
    \frac{d}{dt}\int_{\cM^m_t}\rho_{(x_0,t_0)}(x,t)\,\rd\mu
    =-\int_{\cM^m_t}\left|\frac{(x-x_0)^\perp}{2(t_0-t)}-H\nu\right|^2\rho_{(x_0,t_0)}(x,t)\,\rd\mu.
\]
A consequence of this is the following characterization of tangent flows (singularity models)
for Type-I \mcf\ singularities:

\begin{thmA}[Huisken]
Given any sequence of parabolic dilations at a Type-I \mcf\ singularity, there exists a subsequence
that converges smoothly to a nonempty immersed self-similarly shrinking solution.
\end{thmA}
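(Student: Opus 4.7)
The plan is to combine parabolic rescaling and Type-I curvature bounds to extract a smooth subsequential limit, then use the rigidity inherent in Huisken's monotonicity formula above to identify that limit as self-shrinking. Given the Type-I assumption $|A|^2(T-t)\le C$ and a sequence $\lambda_j\to\infty$ of parabolic dilations at the singular point $(x_0,T)$, I would form the rescaled flows $\tilde\cM^j_\tau:=\lambda_j(\cM_{T+\lambda_j^{-2}\tau}-x_0)$ for $\tau<0$. Each $\tilde\cM^j$ is itself \mcf, and the Type-I bound rescales to the uniform estimate $|\tilde A_j|^2\le C/|\tau|$, independent of $j$. Interior Ecker--Huisken estimates then promote this to uniform bounds on all derivatives of $\tilde A_j$ on compact subsets of space-time, and the standard compactness theorem for \mcf\ extracts a subsequence converging smoothly on compact sets to a limit flow $\tilde\cM^\infty_\tau$. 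Non-emptiness follows from Huisken's monotonicity, which yields a limiting Gaussian density $\Theta_\infty\ge 1$ (strictly greater than one at a singular point by White's regularity), preventing the limit from escaping to infinity.

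The heart of the argument is identifying $\tilde\cM^\infty_\tau$ as self-similarly shrinking. Since the monotone quantity $\int_{\cM^m_t}\rho_{(x_0,T)}\,\rd\mu$ has a finite limit as $t\to T$, integrating Huisken's formula yields
\[
    \int_{-\infty}^{T}\!\!\int_{\cM^m_t}\left|\tfrac{(x-x_0)^\perp}{2(T-t)}-H\nu\right|^2\rho_{(x_0,T)}\,\rd\mu\,\rd t<\infty.
\]
Rewriting this integral in the rescaled variables of $\tilde\cM^j_\tau$ over a fixed window $\tau\in[\tau_1,\tau_2]\subset(-\infty,0)$ exhibits it as an ever-shrinking tail near $T$ on the original flow, so it tends to zero as $j\to\infty$. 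Passing this through the smooth convergence forces the integrand to vanish pointwise on $\tilde\cM^\infty_\tau$, yielding the self-shrinker identity $\tilde H\tilde\nu=\tilde x^\perp/(2|\tau|)$.

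The main obstacle is justifying the passage to the limit in this weighted $L^2$ defect: smooth convergence holds only on compact subsets of space-time, while the Gaussian-weighted integral extends over all of $\tilde\cM^j_\tau$. One needs uniform area-growth estimates on Euclidean balls of the rescaled surfaces in order for the Gaussian factor $\tilde\rho$ to dominate the tail contribution. These area bounds follow from the monotonicity formula itself, applied at varying centers and scales; the bookkeeping is routine but must be done carefully to pass from vanishing $L^2$-defect on compact windows to the pointwise self-similar equation on the limit.
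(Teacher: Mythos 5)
Theorem~A is quoted in the paper as background, attributed to Huisken \cite{Huisken90}, and no proof is given there; your sketch correctly reconstructs Huisken's standard argument (Type-I rescaling giving $|\tilde A_j|^2\le C/|\tau|$, interior estimates and compactness for the subsequential limit, and vanishing of the monotonicity defect on fixed rescaled time windows to force the self-shrinker equation). This is the right proof, so there is nothing in the paper to compare it against beyond the citation.
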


In important subsequent work, Huisken and Sinestrari prove that any mean convex solution becomes
asymptotically convex at its first singular time \cite{HS99}, and they develop a complete surgery
program for solutions originating from two-convex initial data $\cM^m_0\subset\bR^{m+1}$
that are immersed images of compact manifolds of dimensions $m\geq3$ \cite{HS09}. In
the latter work, they show in particular that all singularities of such solutions are either
spherical $(\bS^m)$ or neckpinch $(\bR\times\bS^{m-1})$ singularities. Subsequently, Brendle
and Huisken \cite{BH13}  extend this result to \mcf\ with surgery of mean-convex initial surfaces
$\cM^2_0\subset\bR^3$. Very recently, Haslhofer and Kleiner \cite{HH14} obtain similar results for
\mcf\ with surgery, without dimension restrictions, using shorter proofs that rely on blow-up arguments.

Also recently, given $t_0>0$ and a hypersurface $\cM^m\subset\bR^{m+1}$, Colding and Minicozzi
consider a functional $F_{(x_0,t_0)}(\cM^m)=\int_{\cM^m}\rho_{(x_0,t_0)}(\cdot,0)\,\rd\mu$, and an
entropy $\lambda(\cM^m)=\sup_{(x_0,t_0)}F_{(x_0,t_0)}(\cM^m)$. The entropy $\lambda$ is invariant
under dilations and Euclidean motions, and $\lambda(\cM^m_t)$ is nonincreasing under \mcf.
Among other results, they prove a stability property for tangent flows  \cite{CM09}.
\begin{thmB}[Colding--Minicozzi] Let $\cM^m\subset\bR^{m+1}$ be a smooth\footnote{Recall that
by Ilmanen's $\bR^3$ blowup theorem \cite{Ilmanen95b}, a compact surface evolving by
\mcf\ has a smooth singularity model  at its first singular time. In dimensions $3\leq m\leq 6$,
Theorem~B also holds if $\cM^m$ is merely smooth away from a singular set of locally finite
$(m-2)$-dimensional Hausdorff measure.}
complete embedded self-shrinker with polynomial volume growth and without boundary.

If $\cM^m$ is not equal to any $\bS^k\times\bR^{m-k}$, $0\leq k\leq m$, then for any fixed
$r$, there is a graph $\cN^m$ over $\cM^m$ of a function with arbitrarily small $C^r$ norm
such that $\lambda(\cN^m)<\lambda(\cM^m)$.
\end{thmB}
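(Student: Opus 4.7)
The plan is to view $\cM^m$ as a critical point of the Gaussian functional $F_{(0,1)}$, exhibit a normal perturbation along which its second variation is strictly negative after subtracting the contributions from translations and rescalings, and thereby conclude that the entropy drops. Since $\lambda$ is invariant under dilations and translations and $\cM^m$ is a self-shrinker, we may normalize so that $\lambda(\cM^m)=F_{(0,1)}(\cM^m)$. The Euler--Lagrange equation for $F_{(0,1)}$ is $H=\tfrac12\langle x,\nu\rangle$, and the second variation along a normal graph $x\mapsto x+sf\nu$ is governed by the self-adjoint drift operator
\[
L f \;=\; \Delta f - \tfrac12\langle x,\nabla f\rangle + \bigl(|A|^2+\tfrac12\bigr)f
\]
acting on $L^2\bigl(\cM^m, e^{-|x|^2/4}\,\rd\mu\bigr)$. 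Two distinguished eigenspaces always arise from the symmetries of $F$: translations give $L\langle v,\nu\rangle=\tfrac12\langle v,\nu\rangle$ for every constant $v\in\bR^{m+1}$, and rescalings give $LH=H$. These are precisely the directions along which any apparent decrease of $F_{(0,1)}$ is compensated by a shift of the basepoint, and so are inadmissible for lowering the entropy.

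The technical heart of the proof is a spectral classification: if $\cM^m$ is a smooth, complete, embedded self-shrinker of polynomial volume growth that is not any $\bS^k\times\bR^{m-k}$, then there is a compactly supported function $\phi$ orthogonal (in the weighted $L^2$) to $H$ and to every $\langle v,\nu\rangle$ satisfying $\langle\phi,L\phi\rangle>\langle\phi,\phi\rangle$. Equivalently, the top eigenvalue of $L$ on the weighted-orthogonal complement of $\mathrm{span}\{H,\langle v,\nu\rangle:v\in\bR^{m+1}\}$ exceeds $1$. The expected proof combines the identities above with Simons-type equations for $L|A|^2$ and $L|x|^2$, together with a rigidity analysis of the extremal case: a shrinker that saturates eigenvalue $1$ on this complement must have $|A|^2$ satisfying a splitting equation that forces $\cM^m$ to decompose isometrically as a round sphere times a Euclidean factor.

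Given such a $\phi$, polynomial volume growth and the Gaussian weight ensure that a smooth compactly supported cutoff $\phi_R=\chi_R\phi$ preserves the strict spectral estimate for $R$ large. The normal graph $\cN^m_s$ defined by $x\mapsto x+s\phi_R(x)\nu(x)$ satisfies, for small $s>0$,
\[
F_{(0,1)}(\cN^m_s) \;=\; F_{(0,1)}(\cM^m) - \tfrac{s^2}{2}\bigl(\langle\phi_R,L\phi_R\rangle - \|\phi_R\|^2\bigr) + O(s^3),
\]
a strict decrease. A Morse-lemma analysis near the maximizing basepoint, combined with the orthogonality of $\phi_R$ to translation and dilation modes, shows that $\sup_{(x_0,t_0)}F_{(x_0,t_0)}(\cN^m_s)$ is attained within an $O(s)$-neighborhood of $(0,1)$ and exceeds $F_{(0,1)}(\cN^m_s)$ only by terms of order $o(s^2)$. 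Hence $\lambda(\cN^m_s)<\lambda(\cM^m)$, and choosing $s$ sufficiently small and $R$ sufficiently large produces a graph $\cN^m=\cN^m_s$ of arbitrarily small $C^r$ norm.

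The principal obstacle is the spectral classification step: characterizing the equality case in the eigenvalue gap and showing that only the generalized cylinders attain it. A secondary, more technical obstacle is the basepoint supremum analysis, where one must localize the maximum of $F_{(x_0,t_0)}(\cN^m_s)$ near $(0,1)$ and verify that the orthogonality of $\phi$ to the translation and dilation eigenspaces genuinely prevents an $O(s^2)$ cancellation of the entropy decrease.
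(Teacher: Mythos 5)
First, a point of order: the paper does not prove Theorem~B. It is quoted verbatim as background from Colding--Minicozzi \cite{CM09} (their Theorem~0.12), so there is no in-paper proof to compare your attempt against; the relevant benchmark is the argument in \cite{CM09} itself.

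Measured against that, your proposal is a faithful roadmap of the Colding--Minicozzi strategy --- normalizing so that $\lambda(\cM^m)=F_{(0,1)}(\cM^m)$ (which does hold for self-shrinkers, by Huisken monotonicity), identifying the stability operator $L$ with its eigenfunctions $H$ (eigenvalue $1$) and $\langle v,\nu\rangle$ (eigenvalue $\tfrac12$) arising from dilations and translations, and reducing entropy-instability to $F$-instability modulo these modes. But the two items you defer as ``obstacles'' are not technical loose ends; they are essentially the entire content of the theorem. The spectral classification is Colding--Minicozzi's Theorem~0.10, and their proof is not a rigidity analysis of an extremal eigenvalue: it is a dichotomy. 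If $H$ changes sign, then $H$ is a sign-changing eigenfunction of eigenvalue $1$, so the ground state of $L$ has eigenvalue strictly greater than $1$, and one shows directly that the cross terms with the translation/dilation variations cannot restore stability. If $H\geq0$, one invokes their classification of mean-convex shrinkers (via the Simons-type equation for $L|A|$ and the constancy of $|A|/H$) to conclude $\cM^m$ is a generalized cylinder. Neither branch is supplied or even correctly anticipated by your sketch. The basepoint-supremum step is likewise a genuine theorem (their Theorem~0.15), requiring in particular that the entropy of the perturbed graph still be nearly achieved at a controllable basepoint. Finally, there is a normalization error: the second variation of $F_{(0,1)}$ at a shrinker along $f\nu$ with fixed basepoint is $-\int f\,Lf\,e^{-|x|^2/4}$, so the instability threshold is $\langle\phi,L\phi\rangle>0$, not $\langle\phi,L\phi\rangle>\|\phi\|^2$; demanding top eigenvalue exceeding $1$ on the orthogonal complement is a strictly stronger (and unnecessary) hypothesis that your classification step would then be obliged to deliver. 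As it stands, the proposal is a correct outline with the load-bearing arguments missing.
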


A consequence of Theorem~B is that $\cM^m$ cannot occur as a tangent flow of a \mcf\ solution whose
initial data are any perturbations $\cN^m$ of $\cM^m$. It follows that spheres and cylinders are
the only generic self-similarly shrinking tangent flows.

\medskip

Left open by these important results is the question of whether singularity models are independent
of subsequence. This has been answered affirmatively by Schulze \cite{Schulze11} (using the
Simon--{\L}ojasiewicz inequality) if one tangent flow is a closed, multiplicity-one, smoothly embedded
self-similar shrinker, but the general case remains open. Progress toward resolving this
question was recently made by Colding, Ilmanen, and Minicozzi \cite{CIM13}, who prove that if one
tangent flow at a singularity is a multiplicity-one generalized cylinder, then every subsequential limit
is some generalized cylinder. Since this paper was written, its authors learned of important work
of Colding and Minicozzi \cite{CM14}, who prove that tangent cones are
unique for generic singularities of \mcf, and for all singularities of mean-convex \mcf.
Other recent progress towards classifying singularity models comes
from work of Wang \cite{Wang11}, who proves that  there is at most one smooth complete properly
embedded self-shrinker asymptotic at spatial infinity to any regular cone.

In this paper, we show that the tangent cylinders of certain \mcf\ neckpinches are independent of
subsequence by proving that these singularities have unique asymptotic behavior. With few
exceptions (i.e.~\cite{DS10} for logarithmic fast diffusion) asymptotic analysis results of this nature
(e.g.~\cite{King93} and \cite{DdP07} for logarithmic fast diffusion; \cite{AK07}, \cite{AIK11}, and
\cite{AIK12} for Ricci flow; and \cite{AV97} and \cite{GS09} for \mcf) require a restrictive
hypothesis of rotational symmetry.  In \cite{GKS11}, with I.M.~Sigal, we made partial progress
towards showing uniqueness by removing the
hypothesis of rotational symmetry for \mcf\ neckpinches, retaining only weaker discrete symmetries,
and then proving that neckpinches with these discrete symmetries asymptotically become rotationally
symmetric. (See \cite{GKS11} for precise statements of these assumptions and results.) Here,
we extend that work by removing symmetry assumptions altogether. 
We show that neckpinches originating from initial data sufficiently close to a formal
solution develop unique asymptotic profiles, modulo dilations and Euclidean motions.

At least if Ilmanen's multiplicity conjecture \cite{Ilmanen95a} is true, then combining our work here
with the results cited above makes it reasonable to conjecture that  \mcf\ solutions originating from
generic initial data $\cM^2_0\subset\bR^3$ are constrained to only two universal asymptotic profiles
if they become singular.

\medskip

Passing to higher dimensions, 
we strongly conjecture that our main results in this paper generalize to hypersurface neckpinches
$\cM^m_t\subset\bR^{m+1}$ in all dimensions. Indeed, the strategy of the proof,
as outlined in Section~\ref{Strategy}, rests on the fact that although the linearization of \mcf\ at
a cylinder $\bR\times\bS^1$ is formally unstable, the unstable eigenmodes are simply
``coordinate instabilities.'' By properly choosing seven coordinate parameters, a solution is
decomposed into a dominant component, and a rapidly-decaying component on which the
linearization is strictly stable. The only complications in extending these ideas to dimensions
$m>2$ are that one must deal with $m+5$ coordinate parameters, and with curvature terms
that arise from commuting covariant derivatives on $\bS^{m-1}$. The latter, however, are
harmless, because the coordinate parametrization controls the size of the cylinder.

\medskip

This paper is structured as follows.
As in \cite{GKS11}, we prove our results by means of two bootstrap machines. In Section~\ref{Preliminaries},
we establish notation, state our main assumptions, and outline the main ideas introduced here to generalize
our earlier work. In Section~\ref{Derived}, we derive the evolution equations for the quantities under analysis.
We construct the first bootstrap machine in Sections~\ref{FirstBootstrap}--\ref{InnerProof}
and the second in Sections~\ref{SecondBootstrap}--\ref{SlowlyDecompose}. We complete the proof of the
Main Theorem in Section~\ref{BigFinish}. The proofs of several supporting technical results appear in the appendices.

We now summarize the main results of this paper, using the following terminology.
We say that $(x,r,\theta)$ is a \emph{cylindrical coordinate system} if there exist orthonormal coordinates
$(x_0,x_1,x_2)$ for $\bR^3$ such that $x=x_0$ is the cylindrical axis and $(r,\theta)$ are polar coordinates
for the $(x_1,x_2)$-plane. In this notation, the standard cylinder with axis $x$ is the set $\{r=1\}$, and
a normal graph is determined by $r=u(x,\theta)$.

\medskip

\begin{main}
Suppose that a solution of \mcf\ satisfies Assumptions~[A1]--[A7] from Section~\ref{MainAssumptions}
for sufficiently small $b_0,c_0$.

\textsc{(i)} There exists $T<\infty$ such that the solution becomes singular at time $T$.

\textsc{(ii)} There exists a sequence $(x,r,\theta)_n$ of cylindrical coordinate systems 
such that for all times $t_{n-1}\leq t\leq t_n$, the surface can be written as the normal
graph of a positive function $u(x,\theta,t)$. Moreover, $t_n\nearrow T$ as $n\rightarrow\infty$,
and there exists a limiting $(x,r,\theta)_n\rightarrow(x,r,\theta)_\infty$ cylindrical coordinate system.
In these coordinates, the solution develops a neckpinch, with $u(x,\cdot,t)=0$ if and only if
$x=0$ and $t=T$.
    
\textsc{(iii)} In the cylindrical coordinate system $(x,r,\theta)_n$ constructed at time $t_n$,
the solution admits an ``optimal coordinate'' decomposition
\[
 \frac{u(x,\theta,t_n)}{\lambda_\opt(t_n)}
 =\sqrt{\frac{2+b_\opt(t_n)y^2}{1+\frac12 b_\opt(t_n)}}+\phi_\opt(y,\theta,t_n),
 \]
where $y=\lambda_\opt(t_n)^{-1}x$. The parameters $\lambda_\opt(t_n)$
and $b_\opt(t_n)$ have the asymptotic behavior that as $t_n\nearrow T$,
\begin{align*}
\lambda_\opt(t_n)  &=\{1+o(1)\}\sqrt{T-t_n},\\
b_\opt(t_n)             &=\{1+o(1)\}(-\log(T-t_n))^{-1}.
\end{align*}

\textsc{(iv)} The solution is asymptotically rotationally symmetric near the singularity --- in the
precise technical senses that there exist $\ve_1,\ve_2>0$ such that in each cylindrical coordinate
system $(x,r,\theta)_n$ at all times $t\in[t_{n-1},t_n]$, there exist $C^1$ functions $\lambda$, $a$,
$b$, and $\beta_0,\dots,\beta_4$ of time, with $\lambda(t_n)=\{1+o(1)\}\lambda_\opt(t_n)$,
$a(t_n)=\{1+o(1)\}\frac12$, $b(t_n)=\{1+o(1)\}b_\opt(t_n)$, and $|\beta_k(t)|\ls(-\log(T-t))^{-2}$
for $k=0,\dots,4$,
such that the $\theta$-dependent component $\phi$ of the solution defined by
\[
\phi:=\frac{u}{\lambda}-\sqrt\frac{2+by^2}{2-2a}-\beta_0y-\beta_1\cos\theta
	-\beta_2\sin\theta-\beta_3y\cos\theta-\beta_4y\sin\theta
\]
satisfies the derivative decay estimates
\begin{multline*}
v^{-2}|\dz\phi|+v^{-1}|\dy\dz\phi|+v^{-2}|\dz^2\phi|
+v^{-1}|\dy^2\dz\phi|+v^{-2}|\dy\dz^2\phi|+v^{-3}|\dz^3\phi|\ls b(t)^{2-\ve_1},
\end{multline*}
and the $C^0$ decay estimates
\[
    \frac{|\phi(y,\theta,t)|}{(1+y^2)^{\frac{3}{2}}}\ls b(t)^{2-\ve_2}
    \qquad\text{and}\qquad
    \frac{|\phi(y,\theta,t)|}{(1+y^2)^{\frac{11}{20}}}\ls b(t)^{1-\ve_1},
\]
all holding uniformly as $t_n\nearrow T$.\
\end{main}

\begin{ack}
The authors are deeply grateful to I.M.~Sigal for many helpful discussions related to extensions
of their work in \cite{GKS11}.
\end{ack}

\section{Preliminaries}\label{Preliminaries}

\subsection{Notation}\label{Notation}
We study the evolution of embedded graphs over a cylinder $\mathbb{S}^1\times\mathbb{R}$ in $\mathbb{R}^3$.
We consider initial data $\cM_0$ expressed in a cylindrical coordinate system by a positive function $u_0(x,\theta)$.
Then for as long as the flow remains a graph, all $\cM_{t}$ are determined by $r=u(x,\theta,t)$, where, as we showed
in \cite{GKS11}, $u$ satisfies the initial condition $u(x,\theta,0)=u_0(x,\theta)$ and evolves by
\begin{equation}\label{MCF-u}
    \partial_t u =
    \frac{\{1+(\frac{\dz u}{u})^2\}\partial_x^2 u
      +\frac{1+(\partial_x u)^2}{u^2}\dz^2 u
      -2\frac{(\partial_x u)(\dz u)^2}{u^3} \partial_x \dz u
      -\frac{(\dz u)^2}{u^3}}
    {1+(\partial_x u)^2 + (\frac{\dz u}{u})^2}
    -\frac{1}{u}.
 \end{equation}

As in \cite{GKS11}, we apply adaptive rescaling, transforming the original space-time variables
$x$ and $t$ into rescaled blowup variables\footnote{In the sequel, we abuse notation by using whichever
time scale ($t$ or $\tau$) is most convenient.}
\[
    y(x,t):=\lambda^{-1}(t)x \qquad\text{and}\qquad \tau(t):=\int_0^t \lambda^{-2}(s)\,\mathrm{d}s,
\]
where $\lambda>0$ is a scaling parameter to be chosen, with $\lambda(0)=\lambda_0$. The
equation's scaling symmetry allows us, without loss of generality, to fix $\lambda_0=1$. Note
that $y=0$ is only the approximate center of the developing neckpinch; we address this issue
below. We define a rescaled radius $v(y,\theta,\tau)$ by
\begin{equation}\label{Define-v}
    v\big(y(x,t),\theta,\tau(t)\big):=\lambda^{-1}(t)\,u(x,\theta,t).
\end{equation}
Then $v$ initially satisfies $v(y,\theta,0)=v_0(y,\theta)$,
where $v_0(y,\theta):=\lambda^{-1}_{0} u_{0}(\lambda_{0}y,\theta)$.

With respect to commuting $(y,\theta,\tau)$ derivatives, the quantity $v$ evolves by
\begin{equation}\label{MCF-v}
    \dt v = A_v v +av -v^{-1},
\end{equation}
where $A_v$ is the quasilinear elliptic operator
\begin{equation} \label{Define-A}
    A_v :=
    F_1(p,q)\dy^2 + v^{-2}F_2(p,q)\dz^2 + v^{-1}F_3(p,q)\dy\dz
    +v^{-2}F_4(p,q)\dz -ay\dy.
\end{equation}
The coefficients of $A_v$ are defined by
\begin{equation}\label{Define-Fi}
\begin{array}{llcllllc}
    F_1(p,q) &:= &\displaystyle\frac{1+q^2}{1+p^2+q^2}, &\quad &\quad
    F_2(p,q) &:= &\displaystyle\frac{1+p^2}{1+p^2+q^2},\\
    \\
    F_3(p,q) &:= &\displaystyle-\frac{2pq}{1+p^2+q^2}, &\quad &\quad
    F_4(p,q) &:= &\displaystyle\frac{q}{1+p^2+q^2},
\end{array}
\end{equation}
where
\begin{equation}\label{Define-apq}
    a:=-\lambda\partial_t \lambda,\qquad p:=\dy v,\qquad\text{and}\qquad q:=v^{-1}\dz v.
\end{equation}

Before stating our assumptions, we require some further notation. We denote the
formal solution of the adiabatic approximation to equation~\eqref{MCF-v} by
\begin{equation}\label{adiabatic}
V_{r,s}(y):=\sqrt{\frac{2+sy^2}{2-2r}},
\end{equation}
where $r$ and $s$ are positive parameters. We introduce a step function
\begin{equation}\label{Define-g}
    g(y,s):=
    \left\{\begin{array}{ccc}
    \frac{19}{20}\sqrt{2} & \text{if} & s y^{2}< 20,\\
    \\
    4 & \text{if} & s y^{2} \geq 20.
    \end{array}\right.
\end{equation}
This function differs from that used in \cite{GKS11} in that we here prescribe a slightly
sharper constant in the inner region $\{\beta y^2\leq20\}$, where
\[
    \beta(\tau):=\big(\kappa_0+\tau\big)^{-1}.
\]
This modification is not essential but simplifies some of the extra work required to
compensate for the lack of discrete symmetry assumptions in this paper.\footnote{Here,
$\kappa_0\gg1$ is a large constant to be fixed below. For simplicity, we may without loss of generality set
 $\kappa_0\equiv b_0^{-1}$, where $b_0\ll1$ is the constant introduced in Section~\ref{MainAssumptions}.}

We introduce a Hilbert space $L^2_\mu\equiv L^2(\bS^1\times\bR;\Dz\,\mu\,\Dy)$,
with weighted measure $\mu(y):=(M+y^2)^{-\frac{3}{5}}$ defined with respect to a
constant $M\gg 1$ to be fixed below.\footnote{In \cite{GKS11}, $\mu$ and $M$ were
denoted by $\sigma$ and $\Sigma$, respectively.} We denote the $L^2_\mu$ inner product by
\[
    \lp\vp,\psi\rp :=
    \int_{\bR}\int_{\bS^1} \vp \psi\,\Dz\,\mu\,\Dy,
\]
and its norm by $\|\cdot\|_\mu$. The inner product in the complex Hilbert space $L^2(\bS^1)$ is
denoted by
\[
    \lp\vp,\psi\rs := \int_{\bS^1}\vp \psi\,\Dz,
\]
and its norm is $\|\cdot\|_{\bS^1}$. The undecorated inner product $\lp\cdot,\cdot\rangle$ denotes the
standard (unweighted) inner product in $L^2(\bS^1\times\bR)$.

We define a norm $\|\cdot\|_{m,n}$ by
\[
    \|\phi\|_{m,n} := \left\| (1+y^2)^{-\frac m2} \dy^n \phi\right\|_{L^\infty}.
\]
We write $\vp\ls\psi$ if there exists a uniform constant $C>0$
such that $\vp\leq C\psi$, and we set $\langle x \rangle := \sqrt{1+|x|^2}$. Finally, for any
$f(y,\theta,\tau)$, we define
\begin{equation}    \label{DefinePM}
    f_\pm(y,\tau) := \frac{1}{2\pi}\lp f(y,\theta,\tau), e^{\pm i\theta}\rs.
\end{equation}

\subsection{Main Assumptions}\label{MainAssumptions}
There are small positive constants $b_0,c_0$ such that:

\begin{itemize}
\item[{[A1]}] The initial surface is a graph over $\mathbb{S}^1\times\mathbb{R}$
determined by a smooth function $u_0(x,\theta)>0$. The function $u_0$ is uniformly
$(2b_0)$-Lipschitz and satisfies $u_0(x,\cdot)\geq c_* V_{a_0,b_0}(x)$ for some $c_*>0$,
along with the estimates\,\footnote{Smallness of $(u_0)_\pm$ and $(\partial_xu_0)_\pm$
in the inner region suffice for the first bootstrap machine, but global smallness is needed
for certain propagator estimates in the second bootstrap machine.}
\begin{align*}
    |(u_0)_\pm| + |(\dx u_0)_\pm| &< b_0^2,\\
    \|\langle x\rangle^{-5}\dz u_0\|_{L^\infty} &<b_0^{21/10},\\
    \|\langle x\rangle^{-11/10}(u_0)_\pm\|_{L^\infty}
    +\|\langle x\rangle^{-11/10}(\dx u_0)_\pm\|_{L^\infty}&<b_0^{53/20}.
\end{align*}

\item[{[A2]}] The initial function satisfies $u_0(x,\cdot) > g(x, b_0)$.

\item[{[A3]}] The initial surface is a small deformation of a formal solution $V_{a_0,b_0}(x)$
in the sense that for $(m,n)\in\left\{(3,0),\ (11/10,0),\ (2,1), \ (1,1)\right\}$, one has
\[
    \|u_0(\cdot)-V_{a_0,b_0}(\cdot)\|_{m,n}<b_0^{\frac{m+n}{2}+\frac{1}{10}}.
\]

\item[{[A4]}] The parameter $a_0=a(0)$ obeys the bound $|a_0-1/2|<c_0$.

\item[{[A5]}] The initial surface obeys the further pointwise derivative bounds
\begin{align*}
    \textstyle\sum_{n\neq0,\, 2\leq m+n \leq3} u_0^{-n}|\dx^m\dz^n u_0|&<b_0^2,\\
    b_0u_0^{-1/2}|\dx u_0| + b_0^{1/2}|\dx^2u_0|+|\dx^3 u_0|&<b_0^{\frac32},\\
    |\dx\dz^2 u_0|+u_0^{-1}|\dz^3 u_0|&<c_0.
\end{align*}

\item[{[A6]}] The initial surface obeys the Sobolev bounds
\[
    b_0^{4/5}\|\dx^4 u_0\|_\mu + \|\dx^5u_0\|_\mu
    + \sum_{n\neq0,\, 4\leq m+n \leq5} \|u_0^{-n}\dx^m\dz^n u_0\|_\mu < b_0^4.
\]

\item[{[A7]}] The initial surface satisfies
$\|u_0^{-n}\dx^m\dz^n u_0\|_{L^\infty}<\infty$ for $4 \leq m+n \leq 6$.
\end{itemize}

Three remarks are in order here: \textsc{(i)} Our choice $\lambda_0=1$ guarantees that $x=y$ and $\tau=0$
both hold at $t=0$, hence that the assumptions above apply identically to $u(x,\theta,0)=u_0(x,\theta)$
and $v(y,\theta,0)=v_0(y,\theta)$. \textsc{(ii)} The lower bounds for $u_0$ in Assumptions~[A1] and [A2]
are clearly related, with [A1] being stronger for large $|x|$, and [A2] being stronger for small $|x|$.
The bounds are presented in this way so that [A1] is the only member of [A1]--[A6] that is significantly changed
from our earlier work \cite{GKS11}, other than the slight modification of the function $g$ used in~[A2].
The new inequalities in [A1] compensate for the removal of the discrete symmetries that were used in \cite{GKS11};
we explain this further below.  \textsc{(iii)} Assumption~[A7], which is also new, imposes slightly more control at
spatial infinity than we needed in \cite{GKS11}. This allows us to integrate by parts in $y$ without needing the
hypothesis of reflection symmetry that we used in that earlier paper. (See Remark~\ref{IBP} below.)

\subsection{Strategy of the proof}\label{Strategy}
In earlier work \cite{GKS11} (also see \cite{GS09}) we decomposed a solution $v(y,\theta,\tau)$
of equation~\eqref{MCF-v} into two terms: a point $V_{a(\tau),b(\tau)}(y)$ on a manifold $\mathfrak M$ of
approximate solutions, and a remainder term $\phi(y,\theta,\tau):=v(y,\theta,\tau)-V_{a(\tau),b(\tau)}(y)$
approximately orthogonal to that manifold. These terms represent the large, slowly-changing part of
the solution, and the small, rapidly-decaying part, respectively. The utility of this decomposition is
that permutations of $v$ tangent to $\mathfrak M$ are linearly unstable, whereas those orthogonal to
$\mathfrak M$ are linearly stable. We follow the same strategy here, except that our lack of symmetry
assumptions requires us to construct a more refined decomposition in order to compensate for coordinate
instabilities of \eqref{MCF-v}. The details of this refinement are as follows.

In \cite{GKS11}, we imposed discrete symmetries on the initial surface: $u_0(-x,\cdot)=u_0(x,\cdot)$
and $u_0(\cdot,\theta+\pi)=u_0(\cdot,\theta)$. These assumptions fixed the cylindrical axis and center $y=0$
of the developing neckpinch singularity. In the present paper, we remove these assumptions, which means that we
must determine the unique cylinder and center at which the neckpinch forms. To compensate, Assumption~[A1]
is strengthened from~\cite{GKS11}. Its revised form ensures that a neck aligned to a slightly perturbed cylinder
can still be written as a normal graph, and provides extra estimates (to be improved in the second
bootstrap machine) for the $\theta$-dependence of the solution. These estimates effectively replace the lost
symmetries, which implied inequalities (like $\|\dz^2v\|_{\bS^1}\geq 2\|\dz v\|_{\bS^1}$) that we cannot use here.

\medskip

We determine the unique cylinder and center of the neckpinch as follows.
First, at some fixed time $\tau_1$, we construct ``optimal coordinates'' for the developing solution.
As motivation, we observe that linearizing equation~\eqref{MCF-v} at the cylinder of radius
$a^{-1/2}$ leads to the elliptic operator
\[
    \mL = \mH - a\Delta \equiv -(\dy^2+ay\dy+2a) - a\Delta,
\]
where $\Delta$ is the Laplacian of $\bS^1$. The operator $-\mH=\dy^2+ay\dy+2a$ is
commonly seen in geometric evolution equations. It is self-adjoint in the weighted
Hilbert space $L^2(\bR,e^{-a y^2/2}\Dy)$ with spectrum $\spec(\mH)=\{a(j-2)\}_{j\geq0}$.
Its eigenfunctions are the Hermite polynomials $h_{a,j}$; in particular, its (weakly) unstable
eigenspaces are spanned by $h_{a,0}=1$, $h_{a,1}=y$, and $h_{a,2}=y^2-a^{-1}$. The spectrum
$\{k^2\}_{k\geq0}$ of $-\Delta$ is also well known, and its lowest nontrivial eigenspace is spanned
by the restriction to $\bS^1$ of the coordinate functions $x_1\big|_{\bS^1}=\cos\theta$ and
$x_2\big|_{\bS^1}=\sin\theta$. Hence $\spec(\mL)=\{a(j-2)+ak^2;\; j\geq0,\,k\geq0\}$, and we can
summarize the (weakly) unstable eigenvalues of $\mL$ and their associated eigenfunctions in this table:
\[
    \begin{tabular}[c]{cccc}
    \emph{Eigenvalue} & \emph{Multiplicity} & \emph{Eigenfunction(s)} & \emph{Geometric meaning}\\
    $-2a$ & $1$ & $1$ & rescale cylinder\\
    $-a$ & $3$ & $y=\lambda^{-1}x_0,\,x_1,x_2$ & translate center of neck\\
    $0$ & $3$ & $y^{2}-a^{-1};\;yx_1,yx_2$ & shape neck;\;\; tilt cylinder
    \end{tabular}
\]
In optimal coordinates, a surface is orthogonal to the span of these functions.

\begin{definition}	\label{OptimalCoordinates}
We say a cylindrical coordinate system $(x,r,\theta)$ is an \emph{optimal system} at time $\tau_1$ with
respect to positive parameters $\lambda_\opt\equiv\lambda_\opt(\tau_1)$ and
$b_\opt\equiv b_\opt(\tau_1)$ if
\[
    \pop(y,\theta,\tau_1):=v(y,\theta,\tau_1)-V_{b_\opt(\tau_1)/2,\;b_\opt(\tau_1)}
    \equiv v(y,\theta,\tau_1)-\sqrt{\frac{2+b_\opt(\tau_1)y^2}{1+\frac12 b_\opt(\tau_1)}}
\]
satisfies the orthogonality condition
\[
    \pop\perp\{1,y,y^2-a_1^{-1},\cos\theta,\sin\theta,y\cos\theta,y\sin\theta\}
\]
in $L^2(\bR\times\bS^1,\mathrm{d}\theta\,e^{-a_1y^2/2}\Dy)$, where $v=\lambda_\opt^{-1}u$,
$y=\lambda_\opt^{-1}\,x$, and $a_1:=\frac12-\frac14 b_\opt$.
\end{definition}

Having constructed optimal coordinates at time $\tau_1$, we then observe that by a
straightforward extension of the implicit function theorem argument developed in \cite{GS09}, 
there exists a (possibly small) earlier time interval $[\tau_0,\tau_1]$ and $C^1$ functions
$\lambda(\tau)$, $a(\tau)$, $b(\tau)$, and $\beta_0(\tau),\dots,\beta_4(\tau)$ defined for
$\tau\in[\tau_0,\tau_1]$, such that the quantity
\begin{equation}	\label{tilde-phi}
    \tilde\phi(y,\theta,\tau):=v(y,\theta,\tau)-V_{3/2-2a(\tau),b(\tau)}
    \equiv v(y,\theta,\tau)-\sqrt{\frac{2+b(\tau)y^2}{2-2a(\tau)}}
\end{equation}
can be parameterized in the form
\begin{equation}	\label{Whereisphi}
    \tilde\phi(y,\theta,\tau)=
     \beta_0y+\beta_1\cos\theta+\beta_2\sin\theta
    +\beta_3y\cos\theta+\beta_4y\sin\theta+\phi(y,\theta,\tau),
\end{equation}
where the orthogonality conditions
\[
    \phi\perp\{1,y,y^2-a^{-1},\cos\theta,\sin\theta,y\cos\theta,y\sin\theta\}
\]
hold in $L^2(\bR\times\bS^1,\mathrm{d}\theta\,e^{-a(\tau)y^2/2}\Dy)$ for $\tau\in[\tau_0,\tau_1]$,
with the boundary conditions $\lambda(\tau_1)=\lambda_\opt(\tau_1)$,
$a(\tau_1)=a_1$ (recall that $a=-\lambda\partial_t\lambda$), and $b(\tau_1)=b_\opt(\tau_1)$,
and with the new ``translate'' and ``tilt'' parameters satisfying the boundary conditions
\begin{equation}\label{eq:zeroEnd}
\beta_{0}(\tau_1)=\beta_{1}(\tau_1)=\beta_{2}(\tau_1)=\beta_3(\tau_1)=\beta_4(\tau_1)=0.
\end{equation}

The next step of the argument, accomplished in the second bootstrap machine, is to obtain sufficiently good
control on the parameters $\beta_0,\dots,\beta_4$ to conclude that the sequence of optimal coordinate
systems constructed at discrete times $\tau(t_1)<\tau(t_2)<\cdots<T$ converges. We accomplish this
by proving that the difference between an optimal system at time $t_1$ and an optimal system at a
later time $t_2$ is of order $b^2(t_1)$, where $b(t)=\{1+o(1)\}(-\log(T-t))^{-1}\rightarrow0$ as
$t\nearrow T$.

This concludes our heuristic outline of the proof. We now provide the details needed to make everything rigorous.

\section{How the solution evolves}	\label{Derived}
Given integers $m,n\geq0$ and a real number $k\geq0$, we define
\begin{equation} \label{Define-vmnk}
    v_{m,n,k}:= v^{-k}(\dy^m \dz^n v).
\end{equation}
The following result is an immediate consequence of Lemma~3.1 and Corollary~3.3 from
our earlier work in \cite{GKS11}:

\begin{lemma}
The quantity $v_{m,n,k}^2$ evolves by
\begin{equation} \label{Evolve-vmnk2}
    \begin{array}{lrl}
    \dt (v_{m,n,k}^2) &= &A_v (v_{m,n,k}^2) + 2\left[(k+1)v^{-2}-(m+k-1)a\right]v_{m,n,k}^2\\ \\
                    &  &- B_{m,n,k} + 2E_{m,n,k}v_{m,n,k},
    \end{array}
\end{equation}
where $B_{m,n,k}:= A_v(v_{m,n,k}^2)-2v_{m,n,k} A_v v_{m,n,k}$ satisfies the inequality
\begin{equation}\label{BisGood}
    B_{m,n,k} \geq2\frac{(\dy v_{m,n,k})^2+(v^{-1}\dz v_{m,n,k})^2}{1+p^2+q^2}\geq 0,
\end{equation}
and where the commutator terms $E_{m,n,k}=\sum_{\ell=0}^5 E_{m,n,k,\ell}$ are given by
\begin{equation} \label{Define-Emnkj}
    \begin{array} {lll}
    E_{m,n,k,0} & := &-kv^{-1}v_{m,n,k}(A_v v+ay\dy v),\\ \\
    E_{m,n,k,1} & := & v^{-k}\dy^m\dz^n(F_1\dy^2v)-F_1\dy^2v_{m,n,k},\\ \\
    E_{m,n,k,2} & := & v^{-k}\dy^m\dz^n(v^{-2}F_2\dz^2v)-v^{-2}F_2\dz^2v_{m,n,k},\\ \\
    E_{m,n,k,3} & := & v^{-k}\dy^m\dz^n(v^{-1}F_3\dy\dz v)-v^{-1}F_3\dy\dz v_{m,n,k},\\ \\
    E_{m,n,k,4} & := & v^{-k}\dy^m\dz^n(v^{-2}F_4\dz v)-v^{-2}F_4\dz v_{m,n,k},\\ \\
    E_{m,n,k,5} & := & -v^{-k}\dy^m\dz^n(v^{-1})-v^{-2}v_{m,n,k}.
    \end{array}
\end{equation}
\end{lemma}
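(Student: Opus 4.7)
The plan is to derive the evolution equation \eqref{Evolve-vmnk2} from the evolution of $v_{m,n,k}$ itself (Lemma~3.1 of \cite{GKS11}) by squaring, and then to verify the positivity \eqref{BisGood} by a completion-of-squares computation (essentially Corollary~3.3 of \cite{GKS11}). Since the statement is explicitly flagged as an immediate consequence of those earlier results, the proof amounts to straightforward bookkeeping.

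First, I would recall how the evolution equation for $v_{m,n,k}$ arises: one applies the operator $v^{-k}\dy^m\dz^n$ to both sides of \eqref{MCF-v} and reorganizes. Pulling the $\tau$-derivative inside the prefactor $v^{-k}$ produces the term $E_{m,n,k,0}$; the failure of $\dy^m\dz^n$ to commute with the $v$-dependent coefficients $F_1,\ldots,F_4$ of $A_v$ generates the terms $E_{m,n,k,1},\ldots,E_{m,n,k,4}$; and the reaction $-v^{-1}$ generates $E_{m,n,k,5}$. The clean zeroth-order coefficient $2[(k+1)v^{-2}-(m+k-1)a]$ emerges from consolidating the leading pieces: $(k+1)v^{-2}$ is the exact contribution of differentiating $-v^{-1-k}$, while $-(m+k-1)a$ collects the contributions from commuting $\dy^m$ past the drift $-ay\dy$ (yielding $-ma$) and from the rescaling factor $a$ acting on $v^{1-k}$ (yielding $(1-k)a$).

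Next, I would apply the chain rule $\dt(v_{m,n,k}^2)=2v_{m,n,k}\dt v_{m,n,k}$ and invoke the carré-du-champ identity
\begin{equation*}
    A_v(f^2) = 2f\,A_v f + 2F_1(\dy f)^2 + 2v^{-2}F_2(\dz f)^2 + 2v^{-1}F_3(\dy f)(\dz f),
\end{equation*}
which holds for any smooth $f$ since the first-order terms $v^{-2}F_4\dz$ and $-ay\dy$ in $A_v$ contribute only to the $2f\,A_v f$ piece and not to any gradient-square. Substituting $f=v_{m,n,k}$ and replacing $\dt v_{m,n,k}$ by its expression from the first step rearranges into \eqref{Evolve-vmnk2}, with $B_{m,n,k}$ identified as twice the quadratic-in-gradient expression above.

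Finally, for the positivity \eqref{BisGood}, set $u_1:=\dy v_{m,n,k}$ and $u_2:=v^{-1}\dz v_{m,n,k}$. Substituting the explicit formulas \eqref{Define-Fi} gives
\begin{equation*}
    \tfrac{1}{2}(1+p^2+q^2)B_{m,n,k} = (1+q^2)u_1^2 + (1+p^2)u_2^2 - 2pq\,u_1u_2 = u_1^2 + u_2^2 + (qu_1 - pu_2)^2,
\end{equation*}
which is manifestly at least $u_1^2+u_2^2$. Dividing through by $(1+p^2+q^2)/2$ yields \eqref{BisGood}. No step here presents a real obstacle; the only mildly delicate part is the consolidation of the zeroth-order coefficient described in the second paragraph, consistent with the lemma being flagged as immediate.
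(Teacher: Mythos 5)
Your proposal is correct and follows exactly the route the paper intends: the paper gives no proof of its own but cites Lemma~3.1 and Corollary~3.3 of \cite{GKS11}, and your reconstruction (apply $v^{-k}\dy^m\dz^n$ to \eqref{MCF-v} to get the evolution of $v_{m,n,k}$ with the stated commutator terms, square via the carr\'e-du-champ identity, and complete the square in $u_1=\dy v_{m,n,k}$, $u_2=v^{-1}\dz v_{m,n,k}$ using the explicit $F_\ell$) is precisely that computation. The bookkeeping of the zeroth-order coefficient and the positivity argument both check out.
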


The utility of estimate~\eqref{BisGood} is that once one has suitable first-order estimates
for $v$, one can bound $1+p^2+q^2$ from above, whereupon $B_{m,n,k}$ contributes useful
higher-order terms of the form $-\ve(v_{m+1,n,k}^2 + v_{m,n+1,k+1}^2)$ to the evolution
equation~\eqref{Evolve-vmnk2} satisfied by $v_{m,n,k}^2$.

In the bootstrapping arguments made in this paper, we must estimate the nonlinear ``error terms''
$E_{m,n,k,\ell}$ defined in display~\eqref{Define-Emnkj}. In Section~3 of \cite{GKS11}, we made
the following observations, which we freely use here in estimating these quantities.

\begin{remark}
For any $i,j\geq 0$ and $\ell=1,\dots,4$, there exist constants $C_{i,j,\ell}$ such that
\[
    |\partial_p^i\partial_q^j F_{\ell}(p,q)|\leq C_{i,j,\ell}
\]
for all $p,q\in\bR$.
Moreover, $E_{m,n,k,0}=0$ if $k=0$, and $E_{m,n,k,5}=0$ if $m+n=1$.
\end{remark}

\section{The first bootstrap machine}\label{FirstBootstrap}

\subsection{Inputs}\label{FBMI}
We now present the inputs to our first bootstrap machine, whose structure we describe below.
By standard regularity theory for quasilinear parabolic equations, if the initial data satisfy
the Main Assumptions in Section~\ref{MainAssumptions} for sufficiently small $b_0$ and $c_0$,
then solutions originating from such data will satisfy the properties below for a sufficiently
short time interval $[0,\tau_1]$.\footnote{In order to obtain some of the derivative bounds here, we use
a general interpolation result, Lemma~B.2 from \cite{GKS11}. Note that Lemmas~B.1 and B.2 in \cite{GKS11}
use only the $2\pi$-periodicity of $v(\cdot,\theta,\cdot)$, hence apply here as well.}

Most of these properties are global, while others are local in nature. In many of the arguments
that follow, we separately treat the \emph{inner region} $\{\beta y^2\leq20\}$ and the \emph{outer region}
$\{\beta y^2\geq20\}$, where $\beta(\tau):=(\kappa_0+\tau)^{-1}$. Note that the \emph{inner region} here
corresponds to the union of the \emph{parabolic} and \emph{intermediate} regions in \cite{AK07}.
Note also that our Main Assumptions imply that slightly stronger conditions hold for the inner region
than for the outer. This is natural, because it is in the inner region that one expects the
solution $v$ of \eqref{MCF-v} to be closest to the formal solution $V_{1/2,\beta}$ of the
equation $\frac12y\dy V-\frac12V+V^{-1}=0$ that is the adiabatic approximation of \eqref{MCF-v}.
\medskip

To state the global conditions, we decompose the solution into $\theta$-independent and
$\theta$-dependent parts $v_1,v_2$, respectively, defined by
\begin{equation}\label{Decomposition}
    v_1(y,\tau):=\frac{1}{2\pi}\int_0^{2\pi}v(y,\theta,\tau)\,\mathrm{d}\theta
    \quad\text{and}\quad
    v_2(y,\theta,\tau):=v(y,\theta,\tau)-v_1(y,\tau).
\end{equation}

Here are the global conditions:
\begin{itemize}
\item[{[C0]}] For $\tau\in[0,\tau_1]$, the solution has a uniform lower bound $v(\cdot,\cdot,\tau)\geq\kappa_0^{-1}$.

\item[{[C1]}] For $\tau\in[0,\tau_1]$, the solution satisfies the first-order estimates
\[
    |\dy v| \ls \beta^{\frac{2}{5}}v^{\frac12},\quad
    |\dz v| \ls \beta^{\frac{3}{2}}v^2,\quad\text{and}\quad
    |\dz v| \ls v.
\]

\item[{[C2]}] For $\tau\in[0,\tau_1]$, the solution satisfies the second-order estimates
\[
    |\dy^2 v|  \ls \beta^{\frac{3}{5}},\quad
    |\dy\dz v| \ls \beta^{\frac{3}{2}}v,\quad
    |\dy\dz v| \ls 1,\quad\text{and}\quad
    |\dz^2 v|  \ls \beta^{\frac{3}{2}}v^2.
\]

\item[{[C3]}] For $\tau\in[0,\tau_1]$ the solution satisfies the third-order decay estimates
\[
    |\dy^3 v|\ls\beta\quad\text{and}\quad
    v^{-n}|\dy^m\dz^n v|\ls\beta^{\frac{3}{2}}
\]
for $m+n=3$ with $n\geq 1$, as well as a ``smallness estimate'' that
\[
    \beta^{-\frac{11}{20}}\left(|\dy^3 v|+|\dy^2\dz v|\right)
    +|\dy\dz^2 v|+v^{-1}|\dz^3 v|\ls(\beta_0+\ve_0)^{\frac{1}{40}}
\]
holds for some $\ve_0=\ve_0(b_0,c_0)\ll1$.\footnote{The smallness estimate is only
used in the proof of Theorem~\ref{SmallnessEstimates}.}

\item[{[Ca]}] For $\tau\in[0,\tau_1]$, the parameter $a$ satisfies
\[
    \frac12-\kappa_0^{-1}\leq a \leq \frac12+\kappa_0^{-1}.
\]

\item[{[Cg]}] For $\tau\in[0,\tau_1]$, one has
$\lan y \ran^{-1}|\dy v|\leq M^{\frac{1}{4}}\beta$, where $M\gg1$ is the constant
introduced in Section~\ref{Notation}.

\item[{[Cr]}] There exists $0<\delta\ll 1$ such that the scale-invariant
bound $|v_2|\leq\delta v_1$ holds everywhere for $\tau\in[0,\tau_1]$.

\item[{[Cs]}] For $\tau\in[0,\tau_1]$, one has Sobolev bounds $\|v^{-n}\dy^m\dz^n v\|_{\mu}<\infty$
whenever $4\leq m+n\leq 7$.

\item[{[Ct]}] For $\tau\in[0,\tau_1]$, the solution is ``tame at infinity'' in the sense that for
$4\leq m+n\leq 7$, one has $\|v^{-n}\dy^m\dz^n v\|_{L^\infty}<\infty$.
\end{itemize}

Note that the global gradient Condition~[Cg] posits the $\beta$ decay rate
of the formal solution $V(y,\tau)=\sqrt{2+\beta y^2}$, but with a large constant
$M^{\frac{1}{4}}\gg1$. The effect of our bootstrap argument will be to
sharpen that constant. Remark~\ref{NoCircleHere} (below) shows that Condition~[Cb]
in Section~\ref{SecondBootstrap}, which is directly implied by our Main Assumptions in
Section~\ref{MainAssumptions}, in turn implies that the global gradient bound [Cg] holds,
along with extra properties that are local to the inner region.

\medskip
The extra conditions for the inner region are as follows.
\begin{itemize}
\item[{[C0i]}] For $\tau\in[0,\tau_1]$ and $\beta y^2\leq20$, the quantity $v$
is uniformly bounded from above and below: there exists $C_*$ such that
\[
    \frac{19}{20}\sqrt 2\equiv g(y,\beta)\leq v(y,\cdot,\cdot)\leq C_*.
\]

\item[{[C1i]}] For $\tau\in[0,\tau_1]$ and $\beta y^2\leq20$, the solution satisfies
the stronger first-order estimates
\[
    |\dy v| \ls \beta^{\frac12}v^{\frac12}\quad\text{and}\quad
    |\dz v|\ls\kappa_0^{-\frac12}v.
\]
\item[{[C2i]}] For $\tau\in[0,\tau_1]$ and $\beta y^2\leq20$, the $\theta$-dependent
part\,\footnote{Recall that $v_\pm$ and $(\dy v)_\pm$ are defined by \eqref{DefinePM}.}
of the solution satisfies
\[
    |v_\pm| + |(\dy v)_\pm| \ls \beta^2.
\]
\end{itemize}

\medskip
The only differences above from \cite{GKS11} are Condition~[Ct], which follows from the
new Assumption~[A7], and [C2i], which follows from the revised [A1]. As explained in
Section~\ref{Strategy} and Remark~\ref{IBP} (below), [Ct] and [C2i] allow us to compensate
for the discrete symmetries assumed in \cite{GKS11} that may not hold here. We make the
consequences of these changes clear in the proofs that follow.

\subsection{Outputs}\label{FirstOutput}
The output of this machine consist of the following estimates, which
collectively improve Conditions~[C0]--[C3], [Cs], [Cr], [Cg], and [C0i]--[C1i]:
\[\begin{array}{c}
    v(y,\theta,\tau)\geq g(y,\beta),\quad
    v=\mathcal{O}(\lan y \ran)\,\text{ as }\, |y|\rightarrow\infty;\\ \\
    v^{-\frac12}|\dy v|\ls \beta^{\frac12},\quad
    |\dy v|\leq\ve_0,\quad
    v^{-2}|\dz v|\ls\beta^{\frac{33}{20}},\quad
    v^{-1}|\dz v|\ls\kappa_0^{-\frac12};\\ \\
    \beta|\dy^2 v|+v^{-1}|\dy\dz v|+v^{-2}|\dz^2 v|\ls\beta^{\frac{33}{20}};\\ \\
    |\dy\dz v|+v^{-1}|\dz^2 v|\ls(\beta_0+\ve_0)^{\frac{1}{20}};\\ \\
    \beta^{\frac12}|\dy^3 v|+v^{-1}|\dy^2\dz v|+v^{-2}|\dy\dz^2 v|+v^{-3}|\dz^3v|
    \ls\beta^{\frac{33}{20}};\\ \\
    \beta^{-\frac{11}{20}}\left(|\dy^3 v|+|\dy^2\dz v|\right)
    +|\dy\dz^2 v|+v^{-1}|\dz^3 v|\ls(\beta_0+\ve_0)^{\frac{1}{20}}.
\end{array}\]
Here, $\beta_0\equiv\beta(0)$ and $\ve_0$ are independent of $\tau_1$. Therefore, these
improvements allow us to propagate the assumptions above forward in time in our proof of
the Main Theorem.

\subsection{Structure}

In the first step of the bootstrap argument, we derive improved estimates for $v$ and its first
derivatives in the outer region.

\begin{theorem}\label{FirstOrderEstimates}
Suppose a solution $v=v(y,\theta,\tau)$ of \eqref{MCF-v} satisfies Assumption~[A1] at $\tau=0$.
If for $\tau\in[0,\tau_1]$, Conditions~[Ca] and [C0]--[C2] hold in the outer region $\{\beta y^2\geq20\}$,
and Conditions~[C0i]--[C1i] hold on its boundary, then throughout the outer region $\{\beta y^2\geq20\}$,
one has estimates
\begin{equation}\label{v-below}
    v(y,\cdot,\cdot)\geq 4,
\end{equation}
\begin{equation}\label{v_y-est}
    |\dy v|\ls \beta^{\frac12}v^{\frac12},
\end{equation}
\begin{equation}\label{v_y-bound}
    |\dy v|\leq\ve_0,
\end{equation}
\begin{equation}\label{v_z-est}
    |\dz v|\leq C_0\kappa_0^{-\frac12} v,
\end{equation}
for all $\tau\in[0,\tau_1]$, where $\ve_0$ and $C_0$ depend only on the initial data and not on $\tau_1$.
\end{theorem}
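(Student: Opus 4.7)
The plan is to establish each of \eqref{v-below}--\eqref{v_z-est} by applying a maximum principle to an appropriate scalar built from $v$, using the evolution formula \eqref{Evolve-vmnk2} from Section~\ref{Derived}. The relevant space-time domain in each case is the outer region $\{\beta(\tau)y^2\geq 20,\ \tau\in[0,\tau_1]\}$, whose parabolic boundary consists of the initial slice (controlled by Assumptions~[A1]--[A3]) and the lateral free boundary $\{\beta(\tau)y^2=20\}$ (controlled by the hypothesized inner-region bounds [C0i]--[C1i]). The argument is modeled on the analogous one in \cite{GKS11}, with modifications to accommodate the sharper inner constant $\tfrac{19}{20}\sqrt{2}$ and the absence of discrete symmetries.

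For the lower bound \eqref{v-below}, note that the constant $4$ is a strict subsolution of \eqref{MCF-v}, since $\dt 4-A_v(4)-a\cdot 4+4^{-1}=-(4a-\tfrac14)<0$ whenever $a\geq \tfrac12-\kappa_0^{-1}$ by [Ca]. At an interior contact point at which $v$ first touches $4$ from above, the transport term $-ay\dy v$ vanishes, the elliptic part of $A_v v$ is nonnegative, and so $\dt v\geq 4a-\tfrac14>0$, ruling out such a contact. Assumption~[A2] gives $v(\cdot,\cdot,0)\geq g(\cdot,b_0)=4$ on the initial slice. The main subtlety is the lateral boundary, where [C0i] only provides $v\geq\tfrac{19}{20}\sqrt{2}<4$; this is resolved by observing that the transport velocity $-ay$ points strictly into the inner region across this boundary, so no new contact with $4$ can develop there from the outer side. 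A continuity-in-$\tau$ argument applied to a smooth penalization of the step function $g(y,\beta(\tau))$ closes the gap. This is the principal obstacle, and represents a modest adaptation of the barrier argument of \cite{GKS11} to the present free-boundary setting.

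For the $y$-derivative estimate \eqref{v_y-est}, set $w:=v^{-1}(\dy v)^2=v_{1,0,1/2}^2$ and apply \eqref{Evolve-vmnk2} with $(m,n,k)=(1,0,\tfrac12)$. The reaction coefficient $3v^{-2}-a$ is forced to be $\leq-\eta<0$ for a fixed $\eta>0$ by the just-established bound $v\geq 4$ together with [Ca]. The commutator error $E_{1,0,1/2}$ is estimated via Conditions [C1]--[C2] in the outer region and [C0i]--[C1i] on the boundary, producing terms of size $O(\beta^{3/2})$ after using $v\geq 4$. Rescaling to $w/\beta$ (noting $\dt\beta=-\beta^2$) and testing at a spatial maximum yields a closed differential inequality that propagates the initial bound $w(\cdot,\cdot,0)\ls\beta_0$ coming from [A3] together with the lateral-boundary bound from [C1i]. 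The uniform smallness \eqref{v_y-bound} is then established separately by running the maximum principle directly on $\dy v$: the initial Lipschitz bound $|\dy v_0|\leq 2b_0$ from [A1] is propagated, with [C1i] supplying the lateral control and [C0], [C2] bounding the commutator terms that arise.

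For the $\theta$-derivative bound \eqref{v_z-est}, set $Q:=v^{-2}(\dz v)^2=v_{0,1,1}^2$ and apply \eqref{Evolve-vmnk2} with $(m,n,k)=(0,1,1)$. Here the reaction coefficient is $4v^{-2}\leq\tfrac14$, and, in contrast to the $y$-derivative case, there is no sign gain from the linear part. The estimate must instead be closed by balancing the dissipation $B_{0,1,1}\geq 0$ (via \eqref{BisGood}) against the commutator error $E_{0,1,1}$, using the outer-region smallness of $|\dz v|/v$ from [C1] together with the $\kappa_0^{-1/2}$ smallness from [C1i]. A continuation argument applied to $Q-C_0^2\kappa_0^{-1}$, with $C_0$ chosen large enough to absorb both the initial contribution from [A1] and the lateral contribution from [C1i], then yields the claimed bound. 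This step is where the loss of discrete symmetry is felt most strongly: in \cite{GKS11} the symmetry $\theta\mapsto\theta+\pi$ enforced an inequality of the form $\|\dz^2 v\|_{\bS^1}\geq 2\|\dz v\|_{\bS^1}$ that was central to the corresponding estimate, and its replacement by the strengthened inner-boundary bound [C1i] and the revised Fourier bound in [A1] is the main technical novelty of this step.
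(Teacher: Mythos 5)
Your proposal follows the right general philosophy (maximum principles on the moving outer domain, driven by \eqref{Evolve-vmnk2}), which is indeed the philosophy of the paper; but the paper's own proof is essentially a citation --- \eqref{v-below}, \eqref{v_y-est}, and \eqref{v_z-est} are taken verbatim from Section~5 of \cite{GKS11} after checking that those arguments never used the discrete symmetries --- and the only new content is \eqref{v_y-bound}, obtained by comparing $1+(\dy v)^4$ with the solution of $\Dt\vp=C\beta^{6/5}\vp$: since $\int_0^\infty\beta^{6/5}\,\rd\tau=5\kappa_0^{-1/5}$, the total growth factor $\exp(5C\kappa_0^{-1/5})$ is close to $1$, and $\ve_0$ is obtained by also making $\vp(0)$ close to $1$ via [A1]. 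Measured against this, your reconstruction has two concrete gaps. First, your handling of the lateral boundary for \eqref{v-below} is backwards. The drift coefficient of $\dy$ in $A_v$ is $-ay$, so at a first-touching point on $\{y=+\sqrt{20/\beta}\}$ the one-sided minimum condition only gives $\dy v\geq0$, whence the transport term contributes $-ay\,\dy v\leq0$ to $\dt v$ (the wrong sign), and $\dy^2 v$ is not sign-controlled at a boundary minimum. Equivalently, the characteristics $\dot y=-ay$ carry outer-region values back into the inner region, so the outer region genuinely depends on lateral data, where [C0i] supplies only $v\geq\frac{19}{20}\sqrt2<4$. The assertion that ``no new contact with $4$ can develop there from the outer side'' is therefore unjustified, and the penalization of $g$ does not repair it.

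Second, neither \eqref{v_y-bound} nor \eqref{v_z-est} closes as you describe, because the zeroth-order reaction coefficients in \eqref{Evolve-vmnk2} are positive and non-integrable in $\tau$: for $(\dy v)^2$ (i.e.\ $(m,n,k)=(1,0,0)$) the coefficient is $2v^{-2}\leq\frac18$, and for $v^{-2}(\dz v)^2$ (i.e.\ $(0,1,1)$) it is $4v^{-2}\leq\frac14$, so ``propagating the initial bound'' still permits growth like $e^{\tau/8}$ over a time interval of uncontrolled length. The dissipation $B_{m,n,k}$ cannot absorb this at a pointwise maximum, where $\dy v_{m,n,k}=\dz v_{m,n,k}=0$ and the lower bound \eqref{BisGood} degenerates to $0$; this is exactly why the paper passes to $1+(\dy v)^4$, whose effective growth rate is $O(\beta^{6/5})$ and hence integrable. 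Finally, your diagnosis that \eqref{v_z-est} is where the lost symmetry bites contradicts the paper's explicit remark that the \cite{GKS11} proofs of these first-order estimates used neither evenness in $y$ nor orthogonality to $\mathrm{span}\{1,\sin\theta,\cos\theta\}$; the Poincar\'e-type inequality $\|\dz^2v\|_{\bS^1}\geq2\|\dz v\|_{\bS^1}$ enters only later, in the Lyapunov-functional estimates for the inner region, where it is replaced by [C2i] and the projection argument of Lemma~\ref{Omega-few-y}.
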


\begin{proof}
Except for estimate~\eqref{v_y-bound}, the statements and their proofs are identical to those
stated in Theorem~4.2 and proved in Section~5 of \cite{GKS11}. Indeed, the reader may check that
the proofs there used neither the fact that $v(y,\cdot,\cdot)$ was assumed to be an even function
of $y$, nor that $v(y,\cdot,\cdot)$ was assumed to be orthogonal to the lowest two eigenspaces
$\mathrm{span}\{1,\sin\theta,\cos\theta\}\subset L^2(\{y\}\times\bS^1)$ of the Laplacian on $\bS^1$.

We now show how to improve the bound $|\dy v|\ls1$ proved in in Section~5 of \cite{GKS11} to the bound
$|\dy v|\leq\ve_0$ needed here. In our earlier work, we showed that $1+(\dy v)^4$ was bounded above for
all time by the solution of the \textsc{ivp}
\begin{align*}
    \Dt\vp&=C\beta^{\frac65}\vp,\\
    \vp(0)&=\sup_{\tau=0}\{1+(\dy v)^4\}.
\end{align*}
Noting that
\[
    \frac{\lim_{\tau\rightarrow\infty}\vp(\tau)}{\vp(0)}=\exp\left(\frac{5C}{\kappa_0^{1/5}}\right),
\]
where $C$ does not increase as we increase $\kappa_0$, we obtain the result by choosing $\kappa_0$
sufficiently large and $b_0$, hence $\vp(0)$, sufficiently close to $1$. 
\end{proof}

Note that estimate~\eqref{v_y-bound} provides $\ve_0$ such that
$|\dy v|\leq\ve_0$ in the outer region for $\tau\in[0,\tau_1]$. By
Condition~[C0i], one has $v(\pm\sqrt{20}\beta^{-\frac12})\leq c^2$
for some $c>0$. Hence by quadrature, one obtains the immediate corollary that
\begin{equation}\label{v-above}
    v=\mathcal{O}(\lan y \ran)\quad\text{as}\quad |y|\rightarrow\infty.
\end{equation}
\medskip

In the second step, we derive estimates for second and third derivatives of $v$.
In the outer region, these estimates are obtained by maximum principle arguments,
exactly as in \cite{GKS11}, using the noncompact maximum principle derived in
Appendix~C of that paper. In the inner region, we introduce and bound suitable
Lyapunov functionals and then employ Sobolev embedding. These arguments require
integration by parts.

\begin{remark}  \label{IBP}
For $1\leq m+n\leq7$, Condition~[Cs] implies that $\|v_{m,n,n}\|_{L^\infty}<\infty$ for
$\tau\in[0,\tau_1]$. Using this and \eqref{v-above}, it is straightforward to verify that
we may integrate by parts in $y$ as was done in \cite{GKS11} --- without using reflection
symmetry. Two special but important examples are the facts that for $0\leq r+s,\,m+n \leq5$,
one has
\[
    \lp\dy^2 v_{m,n,n},v_{r,s,s}\rp
    =-\lp v_{m+1,n,n},\,v_{r+1,s,s}+v_{r,s,s}\dy(\log\mu)\rp;
\]
and for $1\leq m+n\leq5$, one has
\begin{align*}
    -\lp y\dy v_{m,n,n},v_{m,n,n}\rp
    &= \frac12\|v_{m,n,n}\|_\mu^2+\frac12\lp v_{m,n,n},\, y\dy(\log\mu) v_{m,n,n}\rp\\
    &\leq\frac12\|v_{m,n,n}\|_\mu^2,
\end{align*}
because $y\dy(\log\mu)\leq0$.
\end{remark}

\begin{theorem}\label{InnerEstimates}
Suppose that a solution $v=v(y,\theta,\tau)$ of equation~\eqref{MCF-v}
satisfies Assumption~[A1] at $\tau=0$, and Conditions~[Ca], [C0]--[C3],
[Cs], [Cr], [Cg], and [C0i]--[C2i] for $\tau\in [0,\tau_1]$. Then for
the same time interval, the solution satisfies the following pointwise
bounds throughout the inner region $\{\beta y^2\leq20\}$:
\begin{equation}\label{InnerSecondOrder}
    \beta|\dy^2 v|+v^{-1}|\dy\dz v|+v^{-2}|\dz^2 v|\ls\beta^{\frac{33}{20}}
\end{equation}
and
\begin{equation}\label{InnerThirdOrder}
    \beta^{\frac12}|\dy^3 v|+v^{-1}|\dy^2\dz v|+v^{-2}|\dy\dz^2 v|
    +v^{-3}|\dz^3 v|\ls\beta^{\frac{33}{20}},
\end{equation}
\end{theorem}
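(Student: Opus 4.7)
The plan is to derive the pointwise bounds from energy estimates in the weighted Hilbert space $L^2_\mu$ and then apply Sobolev embedding on the compact inner region. For each multi-index $(m,n)$ with $2 \leq m+n \leq 3$, I would work with the scale-invariant quantity $v_{m,n,n} = v^{-n}\dy^m\dz^n v$ and compute $\Dt \|v_{m,n,n}\|_\mu^2$ from the evolution equation~\eqref{Evolve-vmnk2}. After multiplying by $\mu$ and integrating over $\bS^1 \times \bR$, integration by parts in the $A_v$ term --- justified via Remark~\ref{IBP} by the new Conditions~[Cs] and [Ct], rather than by the reflection symmetry available in \cite{GKS11} --- produces a dissipation $-\int(1+p^2+q^2)^{-1}(\dy v_{m,n,n})^2\,\mu\,\Dy\Dz$ plus lower-order terms whose sign is controlled via $y\dy(\log\mu) \leq 0$. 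The spectral coefficient $2[(n+1)v^{-2} - (m+n-1)a]$ is strictly negative on the inner region for $m+n \geq 3$ by [C0i] and [Ca]; for $m+n = 2$ it is close to zero and must be balanced against the $B_{m,n,n}$-dissipation from~\eqref{BisGood}, which additionally supplies absorbing terms one derivative order higher in each of $\dy$ and $\dz$.

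Next I would bound the commutator terms $E_{m,n,n,\ell}$ from~\eqref{Define-Emnkj} by applying Leibniz and using Conditions~[C0]--[C3] together with the uniform pointwise bounds on $F_\ell$ and its $p,q$-derivatives. The upshot is a coupled system of the schematic form
\[
\Dt\|v_{m,n,n}\|_\mu^2 + \ve\!\!\sum_{r+s=m+n+1}\!\!\|v_{r,s,s}\|_\mu^2 \leq -c_{m,n}\|v_{m,n,n}\|_\mu^2 + C\beta\!\!\sum_{r+s \leq m+n}\!\!\|v_{r,s,s}\|_\mu^2 + C\beta^{33/10},
\]
where the forcing rate $\beta^{33/10}$ is exactly the square of the target $\beta^{33/20}$ and arises from the sharper Condition~[C2i] controlling the $\theta$-dependent part in the inner region. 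Initial control comes from Assumptions~[A5] and [A6]. Iterating Gronwall from order $m+n = 2$ up through $m+n = 5$ --- absorbing top-order terms into the dissipation at the next order --- yields $\|v_{m,n,n}\|_\mu^2 \ls \beta^{33/10}$ for $2 \leq m+n \leq 5$ and all $\tau \in [0,\tau_1]$. To pass to pointwise bounds on $\{\beta y^2 \leq 20\}$, I would combine Sobolev embedding on $\bS^1 \times(\text{inner region})$, where $\mu \es \beta^{3/5}$, with the interpolation Lemma~B.2 of \cite{GKS11}, which trades $L^2_\mu$ bounds at high order against the pointwise lower-order bounds already provided by [C0]--[C2].

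The main obstacle will be controlling the top-order error contributions in $E_{m,n,n,\ell}$ at $m+n = 3$. Expanding $\dy^m\dz^n(v^{-k}F_\ell\,\partial^\alpha v)$ produces cubic products such as $(\dy v)(\dy\dz v)(\dz v)$ or $v^{-2}(\dy^2 v)(\dz v)^2$ which, on a first pass, saturate the target rate only if one extracts the sharp $\beta^2$ smallness of [C2i] from every $\theta$-derivative factor near the axis and the $\beta^{1/2}$ gradient decay of [C1i] from every $\dy$ factor. A secondary subtlety is that the $\theta$-independent part $v_1$ does \emph{not} enjoy the $\beta^2$ decay of $v_2$, so one must verify at each appearance of $v$ in an error term that only $\theta$-derivatives or $v_2$ factors carry the necessary smallness, while $\theta$-independent $v$-factors are merely estimated pointwise by a constant via [C0i]. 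Once the algebra is arranged so that every error contribution carries total $\beta$-weight at least $\beta^{33/10}$, the Gronwall induction closes and the theorem follows.
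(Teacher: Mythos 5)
Your overall architecture --- Lyapunov functionals $\Omega_{m,n}=\|v_{m,n,n}\|_\mu^2$ for $2\leq m+n\leq5$, differential inequalities via \eqref{Evolve-vmnk2} with integration by parts justified by Remark~\ref{IBP}, Gronwall, then weighted Sobolev embedding on the inner region --- matches the paper. But there is a genuine gap at the central step. You assert that the spectral coefficient $2[(n+1)v^{-2}-(m+n-1)a]$ is strictly negative on the inner region for $m+n\geq3$ by [C0i] and [Ca]. This is false whenever $n$ is large relative to $m$: for $(m,n)=(0,3)$ the coefficient is $2(4v^{-2}-2a)\approx 2(4\cdot\tfrac12-1)=+2$ near the center where $v\approx\sqrt2$ and $a\approx\tfrac12$; similarly $(1,2)$ gives $2(3v^{-2}-2a)\approx+1$, and already at second order $(0,2)$ gives $2(3v^{-2}-a)\approx+2$. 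These ``few $y$-derivative'' functionals are precisely where the paper's new work lives (Lemma~\ref{Omega-few-y}): the positive linear term must be beaten by the \emph{angular} part of the $B_{m,n,k}$-dissipation, $\|v^{-1}\dz v_{m,n,n}\|^2$, via the Poincar\'e inequality on $\bS^1$ restricted to Fourier modes $|k|\geq2$ (which supplies the constant $4>\tfrac{31}{10}$), while the $e^{\pm i\theta}$ modes --- on which that Poincar\'e constant drops to $1$ and is insufficient --- are controlled separately using the new Condition~[C2i]. In \cite{GKS11} this was handled by the $\pi$-periodicity assumption; here the projection-plus-[C2i] argument is the replacement, and it is exactly the step your proposal omits. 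You invoke [C2i] only for the nonlinear error terms $E_{m,n,n,\ell}$, which misplaces its role; without the spectral splitting the Gronwall iteration does not close.

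There is also an exponent-bookkeeping problem. The paper proves $\Omega_{m,n}\ls\beta^4$ (the forcing in the differential inequalities is schematically $\beta^2\Omega^{1/2}+\beta^4$, not $\beta^{33/10}$), and the exponent $\tfrac{33}{20}$ is what \emph{survives} the weighted Sobolev embedding: pointwise-squared $\ls\beta^{4-\frac1{10}-\frac35}=\beta^{33/10}$, the losses coming from $\mu\sim\langle y\rangle^{-6/5}$ over a region of $y$-extent $\beta^{-1/2}$. If your $L^2_\mu$ bound really were only $\beta^{33/10}$, paying the same embedding cost would leave you with pointwise bounds of order $\beta^{13/10}$, well short of $\beta^{33/20}$. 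So you need the stronger $\beta^4$ energy estimate, which in turn requires the forcing terms to be of size $\beta^2\Omega^{1/2}$ rather than $\beta^{33/10}$ --- consistent with the paper's Lemmas~\ref{Omega-all-y}--\ref{Omega-few-y} and the absorption argument in Lemma~\ref{SharpLyapunov}.
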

The fact that we do not achieve the expected $\beta^2$ decay on the
\textsc{rhs} of estimates~\eqref{InnerSecondOrder}--\eqref{InnerThirdOrder}
is due to our use of Sobolev embedding with respect to the weighted measure
for $L^2_\mu$ introduced in Section~\ref{Notation}.

We prove Theorem~\ref{InnerEstimates} in Section~\ref{InnerProof}. This task requires
us to modify the arguments used in \cite{GKS11}: those arguments use evenness of
$v(y,\cdot,\cdot)$ and $\pi$-periodicity of $v(\cdot,\theta,\cdot)$, neither of which
are assumed here.

Using Theorem~\ref{InnerEstimates} to ensure that they hold on the boundary
of the inner region, we then extend its estimates to the outer region.

\begin{theorem}\label{OuterEstimates}
Suppose that a solution $v=v(y,\theta,\tau)$ of equation~\eqref{MCF-v}
satisfies Assumption~[A1] at $\tau=0$, and Conditions~[Ca] and [C0]--[C3]
for $\beta y^2\geq20$ and $\tau\in [0,\tau_1]$. Then the estimates
\begin{equation}\label{OuterSecondOrder}
    \beta|\dy^2 v|+v^{-1}|\dy\dz v|+v^{-2}|\dz^2 v|\ls\beta^{\frac{33}{20}}
\end{equation}
and
\begin{equation}\label{OuterThirdOrder}
\beta^{\frac12}|\dy^3 v|+v^{-1}|\dy^2\dz v|+v^{-2}|\dy\dz^2 v|+v^{-3}|\dz^3v|
    \ls\beta^{\frac{33}{20}}
\end{equation}
hold throughout the outer region $\{\beta y^2\geq20\}$ for the same time interval,
provided that they hold on the boundary $\{\beta y^2=20\}$.
\end{theorem}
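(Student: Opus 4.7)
The plan is to extend the second- and third-order bounds from the boundary $\{\beta y^2 = 20\}$ into the outer region by a maximum principle argument on each scalar $v_{m,n,k}^2$, mirroring the template of Theorem~4.4 in \cite{GKS11}. For each relevant triple --- $(m,n,k) \in \{(2,0,0),(1,1,1),(0,2,2)\}$ at second order and $\{(3,0,0),(2,1,1),(1,2,2),(0,3,3)\}$ at third order --- I would introduce a barrier
\[
    Z_{m,n,k} := v_{m,n,k}^2 - K_{m,n,k}\,\beta^{p_{m,n,k}},
\]
where $p_{m,n,k}$ is chosen so that $K_{m,n,k}\beta^{p_{m,n,k}}$ matches the target bound in \eqref{OuterSecondOrder}--\eqref{OuterThirdOrder}. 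The constants $K_{m,n,k}$ are then taken large enough that $Z_{m,n,k}\leq 0$ on the parabolic boundary of the outer region: at $\tau=0$ by Assumptions~[A5]--[A6], and on $\{\beta y^2 = 20\}$ by the hypothesis of the theorem (which is where Theorem~\ref{InnerEstimates} enters). The noncompact maximum principle of Appendix~C in \cite{GKS11} then applies, with the spatial boundedness at each fixed $\tau$ supplied by Condition~[Ct].

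At a first would-be interior zero of $Z_{m,n,k}$, the evolution equation~\eqref{Evolve-vmnk2} gives
\[
    0 \leq A_v Z_{m,n,k} + 2[(k+1)v^{-2}-(m+k-1)a]\,v_{m,n,k}^2 - B_{m,n,k} + 2E_{m,n,k}\,v_{m,n,k} + K_{m,n,k}\,p_{m,n,k}\,\beta^{p_{m,n,k}+1}.
\]
At such a point, $A_v Z_{m,n,k}\leq 0$ and $-B_{m,n,k}\leq 0$ by \eqref{BisGood}. In the outer region, estimate~\eqref{v-below} gives $v \geq 4$, so $v^{-2} \leq 1/16$; combined with Condition~[Ca], this forces the coefficient $2[(k+1)v^{-2}-(m+k-1)a]$ to be bounded above by some $-c_1<0$ for every triple under consideration, the binding case being $(0,2,2)$ where the coefficient equals $2[3/16-(1/2-\kappa_0^{-1})]<0$. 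This produces a strictly negative principal term $-c_1 K_{m,n,k}\,\beta^{p_{m,n,k}}$ which dominates the harmless lower-order $\beta^{p_{m,n,k}+1}$ contribution.

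What remains is to show that the commutator sum $E_{m,n,k}=\sum_\ell E_{m,n,k,\ell}$ from \eqref{Define-Emnkj} produces only an $o(\beta^{p_{m,n,k}})$ contribution at the max point. Each $E_{m,n,k,\ell}$ is a polynomial in the uniformly bounded coefficients $F_\ell(p,q)$ and in lower-order derivatives $v_{r,s,s}$ with $r+s<m+n$, so it can be controlled using \eqref{v-below}, \eqref{v_y-est}--\eqref{v_z-est}, and --- in the third-order case --- the second-order estimates \eqref{OuterSecondOrder} established in the preceding induction step. The main obstacle I anticipate is the careful bookkeeping of $\beta$-powers for the mixed-derivative quantities $v_{1,1,1}$, $v_{2,1,1}$, and $v_{1,2,2}$: the $F_3$ and $F_4$ terms couple $\dy$ and $\dz$ derivatives, producing many cross-terms whose individual $\beta$-decay must be tracked to verify the aggregate bound $2|E_{m,n,k}|\,|v_{m,n,k}| \leq (c_1/2)\,K_{m,n,k}\,\beta^{p_{m,n,k}}$, closing the argument by contradiction. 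The second- and third-order estimates are therefore proved in sequence rather than simultaneously, so that the freshly-proved \eqref{OuterSecondOrder} is available at the third-order step.
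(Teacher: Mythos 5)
Your proposal follows essentially the same route as the paper, which proves this theorem by the maximum-principle/barrier argument of Theorem~4.6 in \cite{GKS11} (Sections~7.1--7.2 there), using the evolution equation~\eqref{Evolve-vmnk2}, the sign of $-B_{m,n,k}$, the strict negativity of $2[(k+1)v^{-2}-(m+k-1)a]$ forced by $v\geq4$ and [Ca], the noncompact maximum principle of Appendix~C of \cite{GKS11}, and a sequential second-then-third-order treatment of the commutator terms. The only cosmetic remark is that the spatial boundedness needed for the noncompact maximum principle already follows from [C2]--[C3] for the quantities at hand, so [Ct] need not be invoked.
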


As an easy corollary, we apply general interpolation results (c.f.~\cite[Lemma~B.2]{GKS11})
to get our final first-order estimate claimed in Section~\ref{FirstOutput}, namely
\begin{align*}
\frac{|\dz v|}{v^2}
    \leq \frac{1}{(1-\delta)^2}\frac{|\dz v_2|}{v_1^2}
    &\leq \frac{C_1}{(1-\delta)^2}\max_{\theta\in[0,2\pi]}\frac{|\dz^2 v_2|}{v_1^2}\\
    &\leq C_2\frac{(1+\delta)^2}{(1-\delta)^2}\max_{\theta\in[0,2\pi]}\frac{|\dz^2 v|}{v^2}
    \leq C_3 \beta^{\frac{33}{20}}.
\end{align*}

The proof of Theorem~\ref{OuterEstimates} is identical to that of Theorem~4.6 in Sections~7.1--7.2
of \cite{GKS11}, where neither evenness in $y$ nor $\pi$-periodicity in $\theta$ are used.

\medskip

In the final step, we improve the ``smallness estimates'' in Condition~[C3], producing improved
bounds for $|\dy\dz^2 v|$ and $v^{-1}|\dz^3 v|$ that serve as inputs to the second bootstrap machine.

\begin{theorem}\label{SmallnessEstimates}
Suppose that a solution $v=v(y,\theta,\tau)$ of equation~\eqref{MCF-v}
satisfies Assumption~[A1] at $\tau=0$, and Conditions [Ca], [C0]--[C3],
[Cs], [Cr], [Cg], and [C0i]--[C1i] for $\tau\in [0,\tau_1]$. Then for
the same time interval, the solution satisfies
\[
    \beta^{-\frac{11}{10}}\left[(\dy^3 v)^2+(\dy^2\dz v)^2\right]
    +(\dy\dz^2 v)^2+v^{-2}(\dz^3 v)^2
    \ls(\beta_0+\ve_0)^{\frac{1}{10}}.\]
\end{theorem}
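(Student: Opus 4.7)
My plan is to propagate the initial smallness of third-order derivatives of $u_0$ forward in $\tau$ by running the evolution equations \eqref{Evolve-vmnk2} for the four quantities $v_{3,0,0}^2,\,v_{2,1,0}^2,\,v_{1,2,0}^2,\,v_{0,3,1}^2$ appearing in the theorem, combined with the sharper decay bounds of Theorems~\ref{FirstOrderEstimates}, \ref{InnerEstimates}, and \ref{OuterEstimates} and the $L^2_\mu$ framework introduced in Section~\ref{Notation}.

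Specifically, I introduce a Lyapunov functional
\[
    \Phi := \beta^{-11/10}\bigl(v_{3,0,0}^2+v_{2,1,0}^2\bigr)+v_{1,2,0}^2+v_{0,3,1}^2,
\]
and use \eqref{Evolve-vmnk2} together with $\dt\beta=-\beta^2$ to derive an evolution inequality for $\Phi$ of the schematic form $\dt\Phi \leq A_v\Phi + (\text{potential})\cdot\Phi - \sum B_{m,n,k} + (\text{cross terms})$. The potential coefficient on each summand is $2[(k+1)v^{-2}-(m+k-1)a]$, augmented on the first two summands by the friction $\tfrac{11}{10}\beta$ produced by differentiating $\beta^{-11/10}$. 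This potential is strictly negative for $(3,0,0)$ and $(2,1,0)$, using the lower bound $v\geq g(y,\beta)\geq 19\sqrt{2}/20$ from [C0i] in the inner region and $v\geq 4$ from Theorem~\ref{FirstOrderEstimates} in the outer region, together with Condition~[Ca]. For $(1,2,0)$ and $(0,3,1)$, however, the potential reduces to the strictly \emph{positive} $2v^{-2}$, respectively $4v^{-2}$, which is the main source of difficulty. The nonlinear terms $E_{m,n,k}v_{m,n,k}$ are controlled by Cauchy-Schwarz using the first- and second-order decay of Theorems~\ref{FirstOrderEstimates}--\ref{OuterEstimates}, together with the input smallness $(\beta_0+\ve_0)^{1/40}$ from [C3], producing sources of size at most $\beta^{3/2}(\beta_0+\ve_0)^{1/40}$, which are integrable in $\tau$.

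I then split into geometric regions. In the outer region $\{\beta y^2\geq 20\}$, where $v\geq 4$, both $2v^{-2}$ and $4v^{-2}$ are bounded by $\tfrac12$, so they are dominated by the higher-order dissipation extracted from $-\sum B_{m,n,k}$ via a single integration by parts; the noncompact maximum principle of Appendix~C of \cite{GKS11} then propagates any initial pointwise smallness of $\Phi$. In the inner region $\{\beta y^2\leq 20\}$, I pass to the weighted energy $\lp\Phi,1\rp$, integrate by parts using Condition~[Ct] and Remark~\ref{IBP}, and apply \eqref{BisGood} to absorb the positive zero-order coefficients into a coercive first-order quadratic form --- a spectral-gap mechanism analogous to the one driving Theorem~\ref{InnerEstimates}. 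A standard Sobolev embedding with respect to $\mu$ then upgrades the $L^2_\mu$ bound on $\Phi$ back to a pointwise bound in the inner region, which matches the outer estimate on the boundary $\{\beta y^2 = 20\}$ thanks to Theorem~\ref{InnerEstimates}.

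Finally, Assumptions~[A5]--[A6] together with $\beta_0\sim b_0$ give $\Phi(\cdot,\cdot,0)\ls b_0^{19/10}+c_0^2$, which is comfortably dominated by $(\beta_0+\ve_0)^{1/10}$ for small $b_0,c_0$, and a Gronwall argument propagates this bound to all $\tau\in[0,\tau_1]$. The hardest step will be the inner-region spectral-gap estimate: because the positive zero-order coefficients $2v^{-2},\,4v^{-2}$ for $v_{1,2,0}^2$ and $v_{0,3,1}^2$ are not dominated by $(m+k-1)a$ alone, they must be cancelled by the first-order dissipation extracted from $-\sum B_{m,n,k}$ through integration by parts. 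Without Condition~[Ct] --- a consequence of the new Assumption~[A7] --- this integration by parts is not justified in the absence of the reflection symmetry exploited in \cite{GKS11}, which is precisely why [Ct] appears among the hypotheses of the theorem.
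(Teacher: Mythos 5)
The paper offers no new argument for this theorem: it simply records that, once Theorems~\ref{InnerEstimates} and \ref{OuterEstimates} are in place, the proof is \emph{identical} to that of Theorem~4.7 in Section~7.3 of \cite{GKS11}. Your reconstruction has the right general skeleton (the evolution equations \eqref{Evolve-vmnk2}, a maximum principle in the outer region with boundary data supplied by the inner estimates), but it misallocates the difficulty and, at the one place where the theorem has real content, the proposed mechanism fails. In the inner region $\{\beta y^2\leq20\}$ the conclusion is essentially immediate from Theorem~\ref{InnerEstimates}: there $g(y,\beta)\leq v\leq C_*$, so \eqref{InnerThirdOrder} gives $|\dy^3v|\ls\beta^{23/20}$, $|\dy^2\dz v|\ls\beta^{33/20}$, $|\dy\dz^2v|+v^{-1}|\dz^3v|\ls\beta^{33/20}$, and each term of the claimed quantity is $\ls\beta^{6/5}\ll(\beta_0+\ve_0)^{1/10}$. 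No new spectral-gap estimate is needed there, so your ``hardest step'' is actually the easy one; moreover your plan of passing to $\lp\Phi,1\rp$ and then invoking Sobolev embedding would lose a factor $\beta^{-1/10-3/5}$ on the squared quantities, which you cannot afford when the target bound is a constant.

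The real content is the outer region, where $v=\cO(\lan y\ran)$ is unbounded and Theorem~\ref{OuterEstimates} controls only the $v$-weighted derivatives. There your treatment of the positive potentials $+2v^{-2}$ on $(\dy\dz^2v)^2=v_{1,2,0}^2$ and $+4v^{-2}$ on $v^{-2}(\dz^3v)^2=v_{0,3,1}^2$ does not work as stated: by \eqref{BisGood}, $B_{1,2,0}$ and $B_{0,3,1}$ control only $(\dy v_{m,n,k})^2+(v^{-1}\dz v_{m,n,k})^2$, i.e.\ \emph{fourth}-order derivatives of $v$, so they cannot absorb a zeroth-order term $+cv^{-2}v_{m,n,k}^2$; and ``integration by parts'' has no meaning inside the pointwise noncompact maximum principle. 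An uncancelled positive coefficient, even one as small as $2v^{-2}\leq\tfrac18$, yields $e^{c\tau}$ growth over $[0,\tau_1]$ with $\tau_1$ unbounded, destroying the uniform estimate. The missing device is to couple the third-order quantities to a large multiple $K$ of second-order quantities such as $(\dy\dz v)^2+v^{-2}(\dz^2v)^2$: their dissipation terms $-KB_{1,1,0}$, $-KB_{0,2,1}$ contain, again by \eqref{BisGood} and the bound $1+p^2+q^2\leq1+\ve$ from Theorem~\ref{FirstOrderEstimates}, precisely $-\tfrac{2K}{1+\ve}\,v^{-2}\big[(\dy\dz^2v)^2+\cdots\big]$, which for $K$ large dominates the bad potentials, while the second-order quantities' own positive potentials multiply quantities already decaying like $\beta^{33/10}$ by Theorem~\ref{OuterEstimates} and so contribute only $\tau$-integrable sources. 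This coupling is the heart of the arguments in Sections~7.1--7.3 of \cite{GKS11} to which the paper defers, and it is absent from your proposal. Three smaller corrections: the potential for $(2,1,0)$ is $2[v^{-2}-a]$, which is \emph{not} strictly negative where $v$ is close to $\tfrac{19}{20}\sqrt2$; the term $\tfrac{11}{10}\beta$ arising from $\dt\beta^{-11/10}>0$ is destabilizing rather than ``friction''; and [Ct] does not in fact appear among the hypotheses of this theorem.
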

As an easy corollary, we again apply general interpolation results to get our final second-order
estimates claimed in Section~\ref{FirstOutput}, namely
\[
    |\dy\dz v|+v^{-1}|\dz^2 v|\ls(\beta_0+\ve_0)^{\frac{1}{20}}.
\]

Once Theorems~\ref{InnerEstimates} and \ref{OuterEstimates} are established, the proof of
Theorem~\ref{SmallnessEstimates} is identical to that of Theorem~4.7 in Section~7.3 of \cite{GKS11}.

Collectively, Theorems~\ref{FirstOrderEstimates}--\ref{SmallnessEstimates} complete the
first bootstrap machine.

\section{Improved estimates for the inner region}\label{InnerProof}

In this section, we prove Theorem~\ref{InnerEstimates}.

Given integers $m,n\geq0$, we define Lyapunov functionals
$\Omega_{m,n}=\Omega_{m,n}(\tau)$ by
\begin{equation}
\Omega_{m,n}
    := \|v_{m,n,n}\|_\mu^2
    \equiv \int_{-\infty}^{\infty}\int_{0}^{2\pi}
        v^{-2n}(\dy^m\dz^n v)^2\,\Dz\,\mu\,\Dy.
\end{equation}
Note that $v_{m,n,k}$ is defined in equation~\eqref{Define-vmnk}, and
$\mu=\mu(y)$ is defined in Section~\ref{Notation}.
By Conditions~[C2], [C3], and [Cs], these functionals and their
$\tau$-derivatives are well defined if $m+n\leq5$.

We prove the second- and third-order derivative bounds in Theorem~\ref{InnerEstimates} in
two steps. \textsc{(i)} We bound certain weighted sums of $\Omega_{m,n}$ with $2\leq m+n\leq5$.
Here, as in \cite{GKS11}, it suffices to show that derivatives of orders three through five decay
at the same rates as the estimates we derive for those of second order, rather than at the faster
rates one would expect from parabolic smoothing. This somewhat reduces the work involved.
\textsc{(ii)} We apply Sobolev embedding to get pointwise estimates, using the facts that
$|y|\ls\beta^{-\frac12}$ in the inner region, and that $\mu\sim |y|^{-\frac{6}{5}}$ as
$|y|\rightarrow\infty$.

Our lack of discrete symmetry assumptions, in contrast to \cite{GKS11}, requires us to modify
some of the proofs from our earlier work. Here, we employ three kinds of argument: one for
functionals with only $y$-derivatives, one for functionals with ``many'' $y$-derivatives,
and one for functionals with ``few'' $y$-derivatives. These categories are made precise below.

\medskip

As immediate consequences of the proofs of Lemmas~6.1--6.3 in \cite{GKS11}, we find that the
following differential inequalities for the second-order functionals hold pointwise:

\begin{lemma}   \label{PointwiseSecondOrderBounds}
There exists $0<C<\infty$ such that:

\textsc{(i)} the quantity ${\V 20}^2=(\dy^2 v)^2$ satisfies
\begin{align*}
\frac12\dt {\V 20}^2
    \leq&{\V 20}A_v{\V 20}+(v^{-2}-a){\V 20}^2\\
    &+C\Big(\beta|\V 20|+\beta^{\frac{11}{10}}\big\{|\V 30|+|\V 21|+|\V 12|\big\}\Big);
\end{align*}

\textsc{(ii)} the quantity ${\V 11}^2=v^{-2}(\dy\dz v)^2$ satisfies
\begin{align*}
    \frac12\dt{\V 11}^2
    \leq&{\V 11}A_v{\V 11}+(2v^{-2}-a){\V 11}^2\\
    &+C\Big(\beta^2\big\{|\V 11|+|\V 21|+|\V 12|+|\V 03|\big\}+\beta^3|\V 30|\Big);
\end{align*}

\textsc{(iii)} the quantity $(v_{0,2,2})^2=v^{-4}(\dz^2 v)^2$ satisfies
\begin{align*}
    \frac12\dt{\V 02}^2
    \leq&{\V 02}A_v{\V 02}+(3v^{-2}-a){\V 02}^2\\
    &+C\Big(\beta^2\big\{|\V 12|+|\V 03|\big\}+\beta^{\frac{21}{10}}|\V02|+\beta^3|\V 21|\Big).
\end{align*}
\end{lemma}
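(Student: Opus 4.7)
The plan is to specialize the general evolution equation \eqref{Evolve-vmnk2} to each of the three triples $(m,n,k) \in \{(2,0,0),\, (1,1,1),\, (0,2,2)\}$ and then rewrite it in the form stated by the lemma using the identity
\[
A_v(v_{m,n,k}^2) = 2 v_{m,n,k}\, A_v v_{m,n,k} + B_{m,n,k},
\]
which is just a rearrangement of the definition of $B_{m,n,k}$. Substituting into \eqref{Evolve-vmnk2} makes the $B_{m,n,k}$ terms cancel, leaving the identity
\[
\tfrac12\, \dt v_{m,n,k}^2 = v_{m,n,k}\, A_v v_{m,n,k} + \bigl[(k+1)v^{-2} - (m+k-1)a\bigr] v_{m,n,k}^2 + E_{m,n,k}\, v_{m,n,k}.
\]
For $(2,0,0)$ the zeroth-order multiplier evaluates to $v^{-2}-a$; for $(1,1,1)$, to $2v^{-2}-a$; for $(0,2,2)$, to $3v^{-2}-a$, matching precisely the three statements.

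All remaining work consists in bounding $|E_{m,n,k}\, v_{m,n,k}|$ pointwise. I would estimate each of the five pieces $E_{m,n,k,\ell}$ appearing in \eqref{Define-Emnkj} separately. For $\ell = 1,\dots,4$, Leibniz expansion of $v^{-k}\dy^m\dz^n$ applied to the product $F_\ell(p,q)\cdot v^{-j}\,(\text{derivative of }v)$ produces a linear combination of the $v_{i,j,j}$ of total order at most $m+n$, multiplied by polynomials in $p$, $q$, and $\partial_p^r\partial_q^s F_\ell$. The latter are bounded uniformly by the first remark of Section~\ref{Derived}, and $p$, $q$ are controlled by Conditions~[C1] and [C1i]. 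For $\ell = 5$, one uses that $\dy^m\dz^n v^{-1}$ is a polynomial in the $v_{i,j,j}$ divided by a power of $v$, and the leading term cancels against the subtracted $v^{-2}v_{m,n,k}$ by design, so only strictly lower-order products remain. For $\ell = 0$ (which is relevant only in cases (ii) and (iii), where $k\neq0$), the factor $A_v v + ay\dy v$ is read off from \eqref{Define-A} and estimated using [C1], [C1i], [C2], and [C2i]: in the inner region the dominant contribution is the $|\dy^2 v|$ term of size $\beta^{3/5}$, and combined with the prefactor $v^{-1}v_{m,n,k}$ and the lower bound on $v$ from [C0i] this produces the $\beta^{11/10}$ and $\beta^2$ prefactors demanded in cases (ii) and (iii).

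The main obstacle is purely bookkeeping: one must verify that after the Leibniz expansions every resulting monomial in $\{v_{i,j,j}\}$ appears on the claimed right-hand side with the correct $\beta$-weight, or can be absorbed into one that does via [C0i]--[C2i] and [C0]--[C2], with constants that are uniform in $\tau_1$. This was carried out for the same three $(m,n,k)$ values in Lemmas~6.1--6.3 of \cite{GKS11}. Since the structural evolution equation Lemma~1 above coincides with \cite[Lemma~3.1]{GKS11} and the relevant pointwise input Conditions~[C0]--[C3], [C0i]--[C1i] are unchanged, the estimates translate verbatim; the addition of [C2i] in the present setup only improves (rather than obstructs) these pointwise bounds, and crucially, none of the discrete-symmetry assumptions of \cite{GKS11} are invoked at this pointwise commutator stage. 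Thus the inequalities (i)--(iii) follow by the same commutator calculations carried out in the earlier paper.
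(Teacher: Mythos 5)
Your proposal is correct and follows essentially the same route as the paper: the paper derives these three inequalities as ``immediate consequences of the proofs of Lemmas~6.1--6.3 in \cite{GKS11}'', which is exactly the specialization of \eqref{Evolve-vmnk2} to $(m,n,k)\in\{(2,0,0),(1,1,1),(0,2,2)\}$ with the $B_{m,n,k}$ cancellation and the commutator bookkeeping you describe. Your added observation that the cited commutator estimates nowhere use the discrete symmetries of \cite{GKS11} is precisely the point the paper relies on, so there is nothing to correct.
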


We also import various pointwise estimates for derivatives of the coefficients $F_\ell$ of the quasilinear
elliptic operator $A_v$ defined in~\eqref{Define-A}--\eqref{Define-Fi}. They appear as estimates~(6.2)--(6.6)
in \cite{GKS11}, and follow from Theorem~\ref{FirstOrderEstimates}, Condition~[C2], and Conditions~[C0i] and
[C1i] for the inner region. We state them here as:

\begin{lemma}   \label{DifferentiateFi}
First-order derivatives of the coefficients $F_\ell$ obey the bounds
\[
    |\dy F_\ell|\ls\beta^{\frac35}\qquad\text{and}\qquad
    v^{-1}|\dz F_\ell|\ls\beta^{\frac32}.
\]
Second-order derivatives of the coefficients $F_\ell$ obey the bounds
\begin{align*}
    |\dy^2 F_\ell|  &\ls\beta^{\frac65}+|\V 30|+|\V 21|,\\
    v^{-1}|\dy\dz F_\ell|   &\ls\beta^2+|\V 21|+|\V 12|,\\
    v^{-2}|\dz^2 F_\ell|    &\ls\beta^3+|\V 12|+|v_{3,3,0}|.
\end{align*}
\end{lemma}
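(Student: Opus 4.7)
The plan is to obtain all of these estimates by straightforward application of the chain rule to $F_\ell(p,q)$, treating $p=\dy v$ and $q=v^{-1}\dz v$ as functions of $(y,\theta)$, and then using the hypothesis ($|\partial_p^i\partial_q^j F_\ell|\leq C_{i,j,\ell}$ uniformly) recorded in the remark just before Section~\ref{FirstBootstrap} to discard the polynomial prefactors in $F_\ell$.

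First I would handle the first-order derivatives. Writing $\dy F_\ell=(\partial_p F_\ell)\dy p+(\partial_q F_\ell)\dy q$, I need bounds on $\dy p=\dy^2 v$ and $\dy q=-v^{-2}(\dy v)(\dz v)+v^{-1}\dy\dz v$. Condition~[C2] gives $|\dy^2 v|\ls\beta^{3/5}$, and Conditions~[C1]--[C2] together with $v$ bounded above and below in the inner region (Condition~[C0i]) give $|\dy q|\ls\beta^{3/2}$; combining these bounds yields $|\dy F_\ell|\ls\beta^{3/5}$. A parallel computation for $\dz F_\ell=(\partial_p F_\ell)\dy\dz v+(\partial_q F_\ell)\dz q$, where $\dz q=-v^{-2}(\dz v)^2+v^{-1}\dz^2 v$, produces $|\dz F_\ell|\ls\beta^{3/2}v$, from which $v^{-1}|\dz F_\ell|\ls\beta^{3/2}$ follows (again using [C0i] to absorb the $v$-powers).

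Next I would tackle the second-order derivatives. Differentiating once more, each $\partial_y^2 F_\ell$, $\partial_y\partial_\theta F_\ell$, $\partial_\theta^2 F_\ell$ splits into two types of contributions. The \emph{quadratic} contributions $(\partial_p^2 F_\ell)(\dy p)^2$ etc.\ are controlled by the squares of the first-order quantities estimated above, producing purely $\beta$-power bounds (the worst being $(\dy^2 v)^2\ls\beta^{6/5}$, $(\dy q)(\dy p)\ls\beta^{21/10}$, and so on), which are dominated by $\beta^{6/5}$, $\beta^{2}$, and $\beta^{3}$ respectively. The \emph{linear} contributions $(\partial_p F_\ell)\partial_\cdot^2 p+(\partial_q F_\ell)\partial_\cdot^2 q$ must be expanded using the definitions $p=\dy v$, $q=v^{-1}\dz v$; after expansion, the highest-derivative terms (which cannot be absorbed into $\beta$-powers via Theorem~\ref{FirstOrderEstimates} and Condition~[C2]) are exactly $\dy^3 v=v_{3,0,0}$, $v^{-1}\dy^2\dz v=v_{2,1,1}$, $v^{-1}\dy\dz^2 v=v_{1,2,2}$, and $v^{-2}\dz^3 v$, modulated by appropriate inverse powers of $v$, which give $|v_{3,3,0}|$ after including the $v^{-3}$ factor that arises in expanding $\partial_\theta^2(v^{-1}\dz v)$.

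The main obstacle is just bookkeeping: one must be careful to distribute factors of $v^{-1}$ correctly when expanding $\dz q$, $\dy q$, $\dz^2 q$ etc., and to verify that every cross term produced by the product rule falls into one of the three categories---an expression in terms of the listed $v_{m,n,n}$ quantities, a pure $\beta$-power bounded by the claimed exponent, or a factor that can be bounded using Condition~[C0i]. No delicate cancellation or sharp inequality is required; the powers of $\beta$ match because Theorem~\ref{FirstOrderEstimates}, Condition~[C2], and the chain rule respect the natural parabolic scaling of the inner region.
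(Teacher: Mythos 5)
Your proposal is correct and is essentially the argument the paper relies on: the paper simply imports these bounds from estimates (6.2)--(6.6) of \cite{GKS11}, where they are obtained by exactly this chain-rule computation using the uniform boundedness of $\partial_p^i\partial_q^j F_\ell$ together with Theorem~\ref{FirstOrderEstimates} and Conditions~[C1]--[C2], [C0i]--[C1i]. The only caveats are minor index slips in your bookkeeping ($v_{1,2,2}=v^{-2}\dy\dz^2 v$, and the top-order term in $v^{-2}\dz^2 F_\ell$ is $v^{-3}\dz^3 v=v_{0,3,3}$, the paper's ``$v_{3,3,0}$'' being an apparent typo), which do not affect the validity of the argument.
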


In practice, we use these estimates in the following forms, that follow
immediately from combining Lemma~\ref{DifferentiateFi} with Conditions~[C1]--[C3],
along with the fact that $\|\mu^{-1}\dy^2\mu\|_{L^\infty}=\cO(M^{-1})$ as
$M\rightarrow\infty$. The statement is:

\begin{lemma}   \label{ControlCoefficients}
One has the pointwise estimates
\begin{align*}
    \mu^{-1}|\dy^2(\mu F_1)|&\ls|\V 30|+|\V 21|+\beta^{\frac65}
        +M^{-\frac12}\beta^{\frac35}+M^{-1}\ls\beta+M^{-1},\\
    |\dz^2(v^{-2}F_2)|&\ls|v_{3,3,0}|+|\V 12|+\beta^{\frac32}\ls\beta^{\frac32},\\
    \mu^{-1}|\dy\dz(v^{-1}\mu F_3)|&\ls|\V 21|+|\V 12|+(1+M^{-\frac12})\beta^{\frac32}
        \ls(1+M^{-\frac12})\beta^{\frac32},\\
    |\dz(v^{-2}F_4)|&\ls\beta^{\frac32}.
\end{align*}
\end{lemma}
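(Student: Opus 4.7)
The strategy is a direct Leibniz expansion, followed by substitution of the derivative bounds on $F_\ell$ supplied by Lemma~\ref{DifferentiateFi} and the bounds on $v$ and its first and second derivatives given by Conditions~[C1]--[C2], together with the uniform upper and lower bounds on $v$ in the inner region from Condition~[C0i]. Beyond these, the only ingredient is an elementary computation for the weight $\mu(y)=(M+y^2)^{-3/5}$:
\[
    \mu^{-1}\dy\mu=-\frac{6y}{5(M+y^2)}=\cO(M^{-1/2})
    \quad\text{and}\quad
    \mu^{-1}\dy^2\mu=\cO(M^{-1}),
\]
uniformly in $y$ as $M\rightarrow\infty$.

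I will treat the four bounds in turn. The fourth is shortest: Leibniz yields $\dz(v^{-2}F_4)=v^{-2}\dz F_4-2v^{-3}(\dz v)F_4$, and the estimates $v^{-1}|\dz F_\ell|\ls\beta^{3/2}$, $|\dz v|\ls\beta^{3/2}v^2$, and $|F_\ell|\ls 1$ together deliver the claim $\ls\beta^{3/2}$. For the first bound I expand
\[
    \mu^{-1}\dy^2(\mu F_1)=\dy^2 F_1+2(\mu^{-1}\dy\mu)(\dy F_1)+(\mu^{-1}\dy^2\mu)F_1,
\]
substitute $|F_1|\ls 1$, $|\dy F_1|\ls\beta^{3/5}$, and the bound for $|\dy^2 F_1|$ from Lemma~\ref{DifferentiateFi}, and read off precisely the first right-hand side displayed. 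The consolidated upper bound $\ls\beta+M^{-1}$ then follows from Condition~[C3] (which gives $|v_{3,0,0}|,|v_{2,1,1}|\ls\beta$) together with the Young-type inequality $M^{-1/2}\beta^{3/5}\ls M^{-1}+\beta^{6/5}\ls M^{-1}+\beta$, valid since $\beta\leq\kappa_0^{-1}\ll 1$.

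The second and third bounds follow the same recipe. For $|\dz^2(v^{-2}F_2)|$, the dominant contribution comes from $v^{-2}\dz^2 F_2$, which Lemma~\ref{DifferentiateFi} controls by $\beta^3+|v_{1,2,2}|+|v_{3,3,0}|$; the remaining cross terms are absorbed into $\beta^{3/2}$ using Conditions~[C1]--[C2] and the lower bound on $v$ from [C0i]. For $\mu^{-1}|\dy\dz(v^{-1}\mu F_3)|$, I use $\dz\mu=0$ to write
\[
    \mu^{-1}\dy\dz(v^{-1}\mu F_3)=\dy\dz(v^{-1}F_3)+(\mu^{-1}\dy\mu)\,\dz(v^{-1}F_3),
\]
then expand both pieces; the leading term $v^{-1}\dy\dz F_3$ yields $|v_{2,1,1}|+|v_{1,2,2}|$ through Lemma~\ref{DifferentiateFi}, the other cross terms give $\ls\beta^{3/2}$, and the $\mu$-weighted piece contributes $\ls M^{-1/2}\beta^{3/2}$. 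The simplified bounds $\ls\beta^{3/2}$ and $\ls(1+M^{-1/2})\beta^{3/2}$ are then immediate from Condition~[C3]. The proof poses no conceptual obstacle; it is essentially bookkeeping, with the only point needing care being the tracking of which powers of $v$ can be absorbed through the uniform lower bound in the inner region versus which must propagate through Lemma~\ref{DifferentiateFi}. The $M^{-1}$ and $M^{-1/2}$ factors here are the decisive gain that will be exploited, by fixing $M\gg 1$, in the subsequent estimates of the Lyapunov functionals $\Omega_{m,n}$.
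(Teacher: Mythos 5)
Your proposal is correct and follows exactly the route the paper intends: the paper gives no written proof, asserting only that the estimates "follow immediately from combining Lemma~\ref{DifferentiateFi} with Conditions~[C1]--[C3], along with the fact that $\|\mu^{-1}\dy^2\mu\|_{L^\infty}=\cO(M^{-1})$," and your Leibniz expansion with the $\mu^{-1}\dy\mu=\cO(M^{-1/2})$, $\mu^{-1}\dy^2\mu=\cO(M^{-1})$ computations is precisely that argument written out. The bookkeeping, including the Young-type absorption of $M^{-1/2}\beta^{3/5}$ and the use of the lower bounds on $v$ to absorb spare factors of $v^{-1}$, checks out.
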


\subsection{Estimating the time evolution of the Lyapunov functionals}
In this section, we prove estimates for the evolution equations satisfied by $\Omega_{m,n}$.
The proofs and results are similar but not identical to those in Lemmas~6.5--6.9 and 6.11--6.12
of \cite{GKS11}. The arguments here come in three flavors: one for the case that there are no
$\theta$ derivatives ($\Omega_{m,0}$ with $2\leq m\leq5$), another for the case that there are
``many'' $\theta$ derivatives ($\Omega_{m,n}$ with $m\in\{0,1\}$ and $m+n\in\{3,4,5\}$), and a
third for the remaining cases (($\Omega_{m,n}$ with $(m,n)\in\{(2,1),(3,1),(2,2),(4,1),(3,2),(2,3)\}$).

\begin{lemma}   \label{Omega-all-y}
There exist constants $0<\ve<C<\infty$ and $\rho=\rho(M)>0$,
with $\rho(M)\searrow0$ as $M\rightarrow\infty$, such that
\begin{align*}
    \Dt\Omega_{2,0}&\leq
    -\ve\left(\Omega_{2,0}+\Omega_{3,0}+\Omega_{2,1}\right)
    +\rho\left\{\beta\Omega_{2,0}^{\frac12}
    +\beta^{\frac12}\Omega_{3,0}^{\frac12}
    +\beta^{\frac{11}{10}}
        \left(\Omega_{2,1}^{\frac12}+\Omega_{1,2}^{\frac12}\right)\right\},\\ \\
    \Dt\Omega_{3,0}&\leq
    -\ve\left(\Omega_{3,0}+\Omega_{4,0}+\Omega_{3,1}\right)
    +\rho\beta^{\frac{3}{2}}\left\{
    \Omega_{3,0}^{\frac12}
    +\beta^{\frac{1}{10}}\Omega_{4,0}^{\frac12}
    +\Omega_{3,1}^{\frac12}
    +\Omega_{2,2}^{\frac12}\right\}\\
    &\qquad+C\beta\left\{\Omega_{3,0}+\Omega_{2,1}+\Omega_{1,2}+\Omega_{0,3}\right\}
    +C\beta^{\frac12}\Omega_{3,0}^{\frac12}\Omega_{2,0}^{\frac12},\\ \\
    \Dt\Omega_{4,0}&\leq
    -\ve\left(\Omega_{4,0}+\Omega_{5,0}+\Omega_{4,1}\right)
    +C\beta^{\frac12}
        \Big(\sum_{4\leq i+j\leq 5}\Omega_{i,j}\Big)
    +C\beta^{\frac85}\Big(\sum_{4\leq i+j\leq 5}\Omega_{i,j}^{\frac12}\Big),\\ \\
    \Dt\Omega_{5,0}&\leq
    -\ve\left(\Omega_{5,0}+\Omega_{6,0}+\Omega_{5,1}\right)
    +C\beta^{\frac12}\Big(\sum_{5\leq i+j\leq 6}\Omega_{i,j}\Big)
    +C\beta^{\frac{21}{10}}\Big(\sum_{5\leq i+j\leq 6}\Omega_{i,j}^{\frac12}\Big).
\end{align*}
\end{lemma}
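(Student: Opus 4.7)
The plan is to differentiate $\Omega_{m,0}$ in $\tau$, insert the evolution equation~\eqref{Evolve-vmnk2} specialized to $n=k=0$, and integrate against $\mu\,\Dy\,\Dz$. The principal term $\lp A_v(v_{m,0,0}^2),1\rp$ is integrated by parts in $y$ (justified by Remark~\ref{IBP}, which uses Conditions~[Cs],[Ct] in lieu of the reflection symmetry of \cite{GKS11}) and in $\theta$ (by periodicity). This produces the manifestly negative contributions $-2\lp F_1(\dy v_{m,0,0})^2,1\rp$ and $-2\lp v^{-2}F_2(\dz v_{m,0,0})^2,1\rp$, plus weight-commutator errors involving $\mu^{-1}\dy^2(\mu F_1)$, $\dz^2(v^{-2}F_2)$, $\mu^{-1}\dy\dz(v^{-1}\mu F_3)$, and $\dz(v^{-2}F_4)$. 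Together with $-\lp B_{m,0,0},1\rp\leq 0$ (from~\eqref{BisGood}) and the zeroth-order spectral term $2\lp[v^{-2}-(m-1)a]v_{m,0,0}^2,1\rp$, these supply the coercive $-\ve(\Omega_{m,0}+\Omega_{m+1,0}+\Omega_{m,1})$ appearing on the right-hand side, using $v\geq \tfrac{19}{20}\sqrt{2}$ in the inner region and the $\mu$-decay in the outer region to make the $v^{-2}-(m-1)a$ piece absorbable (marginally so for $m=2$, comfortably for $m\geq 3$).

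The weight-commutator $\lp v_{m,0,0}^2,\mu^{-1}\dy^2(\mu F_1)\rp$ is bounded by the pointwise estimate $\mu^{-1}|\dy^2(\mu F_1)|\lesssim\beta+M^{-1}$ from Lemma~\ref{ControlCoefficients}, which produces the factor $\rho(M)\searrow 0$ appearing in the $\Omega_{2,0}$ and $\Omega_{3,0}$ inequalities; the $\theta$-commutators are handled analogously. The remaining commutator term $2\lp E_{m,0,0}v_{m,0,0},1\rp$ is expanded by Leibniz (noting $E_{m,0,0,0}=0$ since $k=0$), producing sums of products of $F_i$-derivatives (controlled by Lemma~\ref{DifferentiateFi}) with lower-order derivatives of $v$ (controlled by Conditions~[C1]--[C3] and Theorem~\ref{FirstOrderEstimates}). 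Cauchy--Schwarz in $L^2_\mu$ then converts each product into a term of the form $\beta^{\alpha}\Omega_{i_1,j_1}^{1/2}\Omega_{i_2,j_2}^{1/2}$ whose exponent $\alpha$ matches that of the claimed inequality, with any excess high-derivative factor absorbed by Young's inequality into the coercive $-\ve\Omega_{m+1,0}$ or $-\ve\Omega_{m,1}$ headers.

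The main obstacle is matching the specific $\beta$-exponents on the right-hand side. For example, in the $\Omega_{3,0}$ estimate, the Leibniz expansion of $E_{3,0,0,1}$ contains the term $(\dy F_1)(\dy^3 v)$, and the bound $|\dy F_1|\lesssim\beta^{3/5}$ (rather than $\beta$) forces the weaker cross-term $\beta^{1/2}\Omega_{3,0}^{1/2}\Omega_{2,0}^{1/2}$ on the right; other pieces of this commutator must be grouped into the sum $\beta\{\Omega_{3,0}+\Omega_{2,1}+\Omega_{1,2}+\Omega_{0,3}\}$. For $m=4,5$, several $y$-derivatives can be transferred onto $F_i$ via repeated differentiation of Lemma~\ref{DifferentiateFi} combined with Conditions~[C3] and [Cs], yielding the $\beta^{8/5}$ and $\beta^{21/10}$ tail powers. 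Throughout, no discrete symmetries are invoked; all integration by parts is legitimized purely by Remark~\ref{IBP} via Conditions~[Cs] and~[Ct], which is the essential new ingredient relative to \cite{GKS11}.
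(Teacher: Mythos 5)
Your outline matches the paper's strategy in most respects (energy estimates for $\Omega_{m,0}$, integration by parts in $y$ justified by Remark~\ref{IBP} via [Cs]/[Ct], Lemma~\ref{ControlCoefficients} for the coefficient derivatives, Cauchy--Schwarz in $L^2_\mu$, and the pointwise bound $v^{-2}-(m-1)a\leq-\tfrac14$ for $3\leq m\leq 5$). But there is a genuine gap in your treatment of the zeroth-order term for $m=2$. You claim that $2\lp[v^{-2}-a]\V20^2,1\rangle$ is ``marginally absorbable'' using $v\geq\tfrac{19}{20}\sqrt2$ in the inner region. It is not: there $v^{-2}$ can be as large as $\tfrac{200}{361}\approx0.554$ while $a\approx\tfrac12$, so $v^{-2}-a$ can be \emph{positive}, and after adding the destabilizing $+\tfrac a2\Omega_{2,0}$ produced by integrating the drift $-ay\dy$ by parts, a pointwise bound leaves a coefficient of order $+\tfrac a2+(v^{-2}-a)>0$ on $\Omega_{2,0}$ with nothing to absorb it --- the diffusion only yields $-\Omega_{3,0}-\Omega_{2,1}$, and no Poincar\'e-type inequality controls $\Omega_{2,0}$ by those. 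The paper's mechanism is different: it writes $\lp v^{-2}\V20,\V20\rp=\lp v_{2,0,2},\V 20\rp$ and integrates by parts once more in $y$, so that every resulting term carries a factor $v_{1,0,2}=v^{-2}\dy v$, which is small by Condition~[Cg] and estimate~\eqref{v_y-est}; this shows the whole $v^{-2}$ contribution is $\leq\rho\big(\beta\Omega_{2,0}^{1/2}+\beta^{1/2}\Omega_{3,0}^{1/2}\big)$, leaving the clean $-a\Omega_{2,0}+\tfrac a2\Omega_{2,0}=-\tfrac a2\Omega_{2,0}$ as the source of coercivity in $\Omega_{2,0}$. Note that this step is also the \emph{only} source of the $\beta^{1/2}\Omega_{3,0}^{1/2}$ term in the stated $\Omega_{2,0}$ inequality, which your outline cannot otherwise produce.

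Two smaller corrections. First, the prefactor $\rho(M)\searrow0$ does not come from the $M^{-1}$ in Lemma~\ref{ControlCoefficients}; it arises because Cauchy--Schwarz against bounded functions produces the factor $\|1\|_\mu^2=2\pi\int(M+y^2)^{-3/5}\,\mathrm{d}y$, which tends to $0$ as $M\to\infty$ (the $M^{-1}$ in Lemma~\ref{ControlCoefficients} instead serves to make the $\ve_1\Omega_{2,0}$ remainder in $I_*$ small enough to sit below $\tfrac a2$). Second, for the outer region it is the lower bound $v\geq4$ from \eqref{v-below}, not the decay of $\mu$, that makes $v^{-2}$ harmless. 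With the integration-by-parts treatment of the $v^{-2}$ term inserted for $m=2$, the rest of your argument goes through as in the paper (which delegates the commutator estimates for $m\geq3$ to Lemmas~D.1--D.7 of \cite{GKS11}, valid without symmetry assumptions).
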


\begin{proof}
We provide a detailed argument for $\Omega_{2,0}$, and sketch the remaining cases.

\textsc{Argument for} $\Omega_{2,0}$:
Applying Lemma~\ref{PointwiseSecondOrderBounds} and Cauchy--Schwarz in $L^2_\mu$, we get
\begin{align*}
    \frac12\Dt\Omega_{2,0}\leq
    &\lp\V 20,A_v\V 20\rp+\lp(v^{-2}-a)\V 20,\V 20\rp\\
    &+\rho\Big\{\beta\Omega_{2,0}^{\frac12}
    +\beta^{\frac{11}{10}}\big(\Omega_{3,0}^{\frac12}+\Omega_{2,1}^{\frac12}
    +\Omega_{1,2}^{\frac12}\big)\Big\}.
\end{align*}
Define $I_1:=\lp\V 20,A_v\V 20\rp$ and $I_2:=\lp(v^{-2}-a)\V 20,\V 20\rp$.
Here $\rho$ is a constant that can be made as small as desired, because
\[
\rho=\int_{-\infty}^{\infty}\int_{0}^{2\pi} 
\Dz\,\mu(y)\,\Dy=2\pi \int_{-\infty}^{\infty}\big(M+y^2\big)^{-\frac{3}{5}}\ \Dy,
\]
which tends to zero as $M\nearrow\infty$.\footnote{Recall that $\mu(y):=(M+y^2)^{-\frac{3}{5}}$,
where $M$ is an arbitrarily large constant.}

By Remark~\ref{IBP}, we may integrate $I_2$ by parts in $y$, obtaining
\begin{align*}
    I_2&=-a\Omega_{2,0}+\lp (v_{2,0,2}),\V 20\rp\\
    &\leq-a\Omega_{2,0}
        +\lp (v_{1,0,2}),2\V 20\V 10-\V 30-\V 20\dy(\log\mu)\rp\\
    &\leq-a\Omega_{2,0}
        +\rho\big(\beta\Omega_{2,0}^{\frac12}+\beta^{\frac12}\Omega_{3,0}^{\frac12}\big).
\end{align*}
In the last step, we used the facts that $\|\dy\log\mu\|_{L^\infty}=\cO(M^{-\frac12})$
as $M\rightarrow\infty$ and that $\dy(\log\mu)=\cO(|y|^{-1})$ as $|y|\rightarrow\infty$,
along with Condition~[Cg] and estimate~\eqref{v_y-est}.

Again using Remark~\ref{IBP} and integrating by parts, we find that
\[
    I_1\leq I_*+\frac{a}{2}\Omega_{2,0},
\]
where $I_*:=\lp\V 20,F_1\V 40+F_2\V 22+F_3\V 31+F_4 (v_{2,1,2})\rp$. Then defining
$F_0:=(1+p^2+q^2)^{-1}$ lets us estimate
\begin{align*}
    I_*&\leq-\lp F_0,\V 30^2+\V 21^2\rp\\
    &\quad+\frac12\lp\V 20^2, \mu^{-1}\dy^2(\mu F_1)+\dz^2(v^{-2}F_2)
        +\mu^{-1}\dy\dz(v^{-1}\mu F_3)-\dz(v^{-2}F_4)\rp\\
    &\leq-(1-\ve)(\Omega_{3,0}+\Omega_{2,1})+\ve_1\Omega_{2,0}.
\end{align*}
The first inequality results from applying Cauchy--Schwarz pointwise to the coefficients of
the elliptic operator $A_v$ after integrating by parts. To get the second inequality, we use
estimates~\eqref{v_y-bound} and \eqref{v_z-est} from Theorem~\ref{FirstOrderEstimates} to see that
we can bound $F_0\geq(1-\ve)$. We use Lemma~\ref{ControlCoefficients} to control the remaining
terms that came from integrating by parts, noting that by taking $\kappa_0$ and $M$ sufficiently
large, we can by Condition~[Ca] ensure that $\ve_1\in\big(0,\frac{a}{2}-\ve\big)$. The result follows.

\textsc{Arguments for the remaining }$\Omega_{m,0}$: The proofs here are essentially identical
to the corresponding cases considered in Lemmas~6.9, 6.11, and 6.12 of \cite{GKS11}. It is important
to note that we can use the results of Appendix~D from \cite{GKS11} without modification: because
we did not need sharp estimates for derivatives of orders three and higher there (as is also true
here) we did not need to exploit the discrete symmetry assumptions of that paper to get better results
when integrating by parts in $\theta$. As a consequence, the conclusions of Lemmas~D.1--D.7, which
allow us to estimate the nonlinear terms in the $\Omega_{m,n}$ evolution equations, hold here without
modification. For the linear terms, the key estimate, which holds by Conditions~[Ca] and [C0i] and
Theorem~\ref{FirstOrderEstimates} when $3\leq m\leq5$, is
\[
        I_2:=\lp(v^{-2}-(m-1)a)(v_{m,0,0}),(v_{m,0,0})\rp\leq-\frac14\Omega_{m,0}.
\]
We omit further details.
\end{proof}

\begin{lemma}   \label{Omega-many-y}
There exist $0<\ve<C<\infty$ such that for $(m,n)=(2,1)$, one has
\begin{align*}
    \Dt\Omega_{2,1}\leq
    &-\ve\left(\Omega_{2,1}+\Omega_{3,1}+\Omega_{2,2}\right)
    +C\beta^{\frac{2}{5}}\left(\Omega_{2,1}+\Omega_{1,2}+\Omega_{0,3}\right)\\
    &+C\beta^2\left\{\Omega_{2,1}^{\frac{1}{2}}
    +\beta\Omega_{4,0}^{\frac{1}{2}}+\Omega_{3,1}^{\frac{1}{2}}
    +\Omega_{2,2}^{\frac{1}{2}}+\Omega_{1,3}^{\frac{1}{2}}
    +\Omega_{0,4}^{\frac{1}{2}}\right\};
\end{align*}
and for $(m,n)\in\{(3,1),(2,2)\}$, one has
\begin{align*}
    \Dt\Omega_{m,n}\leq
    &-\ve\left(\Omega_{m,n}+\Omega_{m+1,n}+\Omega_{m,n+1}\right)
    +C\beta^{\frac{1}{2}}\left(\sum_{4\leq i+j\leq 5}\Omega_{i,j}\right)\\
    &+C\beta^2\left(\sum_{4\leq i+j\leq 5}\Omega_{i,j}^{\frac{1}{2}}\right);
\end{align*}
and for $(m,n)\in\{(4,1),(3,2),(2,3)\}$, one has
\begin{align*}
    \Dt\Omega_{m,n}\leq
    &-\ve\left(\Omega_{m,n}+\Omega_{m+1,n}+\Omega_{m,n+1}\right)
    +C\beta^{\frac{1}{2}}\left(\sum_{5\leq i+j\leq 6}\Omega_{i,j}\right)\\
    &+C\beta^{\frac{21}{10}}\left(\sum_{5\leq i+j\leq 6}\Omega_{i,j}^{\frac{1}{2}}\right).
\end{align*}
\end{lemma}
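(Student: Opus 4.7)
The plan is to treat each case $(m,n)$ analogously to the argument for $\Omega_{2,0}$ in Lemma~\ref{Omega-all-y}, applied to the functional $\Omega_{m,n}=\|v_{m,n,n}\|_\mu^2$ and the pointwise evolution equation \eqref{Evolve-vmnk2} specialized to $k=n$. Multiplying that equation by $\mu$ and integrating over $\bR\times\bS^1$ decomposes
\[
\tfrac12\Dt\Omega_{m,n}=I_1+I_2-I_3+I_4,
\]
where $I_1:=\lp v_{m,n,n},A_v v_{m,n,n}\rp$, $I_2:=\lp[(n+1)v^{-2}-(m+n-1)a]v_{m,n,n},v_{m,n,n}\rp$, $I_3:=\tfrac12\int B_{m,n,n}\,\mu\,\Dz\,\Dy$, and $I_4:=\lp v_{m,n,n},E_{m,n,n}\rp$. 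The goal is to combine $I_1-I_3$ and the dissipative part of $I_2$ into $-\ve(\Omega_{m,n}+\Omega_{m+1,n}+\Omega_{m,n+1})$, and to bound $I_4$ by sums of products $\Omega_{m,n}^{1/2}\Omega_{i,j}^{1/2}$ weighted by explicit $\beta$-powers.

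For $I_1-I_3$, I would integrate by parts in both $y$ and $\theta$ using Remark~\ref{IBP}. The quadratic-in-highest-derivative piece, combined with the lower bound $F_0=(1+p^2+q^2)^{-1}\geq 1-\ve$ from Theorem~\ref{FirstOrderEstimates} and the identity $\dz v_{m,n,n}=v\,v_{m,n+1,n+1}-n\,v_{0,1,1}v_{m,n,n}$ (whose second term is small by Condition~[C1i]), yields the damping $-(1-\ve)(\Omega_{m+1,n}+\Omega_{m,n+1})$. Derivatives of $F_\ell$ and $\mu$ generated by the integration by parts are absorbed via Lemma~\ref{ControlCoefficients}, contributing at worst $\ve_1\Omega_{m,n}+(\text{small})$, while the drift piece $-ay\dy$ in $A_v$ contributes $+\tfrac{a}{2}\Omega_{m,n}$ plus a nonpositive correction, because $y\dy\log\mu\le 0$.

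The handling of $I_2$ is where the three groupings in the statement diverge. In the outer region, estimate~\eqref{v-above} gives $v^{-2}\ls\lan y\ran^{-2}$, so that $I_2\leq-\tfrac{a}{2}\Omega_{m,n}+(\text{small})$ there; in the inner region Condition~[C0i] yields only $v^{-2}\leq(20/19)^2/2$, which is too weak to make the pointwise coefficient negative when $(m,n)\in\{(2,1),(2,2)\}$. For those two cases, the offending inner-region contribution is absorbed into the $-(1-\ve)\Omega_{m,n+1}$ damping already extracted from $I_1-I_3$, using that the inner region carries bounded $\mu$-mass together with the identity for $\dz v_{m,n,n}$ above and Cauchy--Schwarz. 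For $(m,n)\in\{(3,1),(4,1),(3,2),(2,3)\}$ the pointwise coefficient in $I_2$ is already negative throughout the inner region, which is why those cases enjoy the cleaner $-\ve\Omega_{m,n}$ damping without any residual $\beta^{2/5}$ term.

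The bulk of the work is estimating $I_4$. I would expand $E_{m,n,n}=\sum_{\ell=0}^5 E_{m,n,n,\ell}$ via the Leibniz rule, apply Cauchy--Schwarz in $L^2_\mu$, bound the low-order factors pointwise using Conditions~[C1]--[C3] and Lemma~\ref{DifferentiateFi}, and absorb the highest-order factor into one of the $\Omega_{i,j}^{1/2}$ on the right-hand side. The weakest global gradient estimate $|\dy v|\ls\beta^{2/5}v^{1/2}$ from~[C1] appears squared in some subterms of $E_{2,1,1,\ell}$, producing the $\beta^{2/5}$ coefficient in front of $\Omega_{2,1}+\Omega_{1,2}+\Omega_{0,3}$; the remaining cases can use the stronger inner-region bounds $|\dy v|\ls\beta^{1/2}v^{1/2}$ and $|\dz v|\ls\kappa_0^{-1/2}v$ from~[C1i], producing the cleaner $\beta^{1/2}$ and $\beta^{2}$ (resp.\ $\beta^{21/10}$) weights shown. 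The principal obstacle is precisely this bookkeeping: each $E_{m,n,n,\ell}$ unpacks into many subterms of different homogeneity, and matching each to the correct $\beta$-power requires systematic use of the inner- versus outer-region estimates. This is the same kind of analysis carried out in Lemmas~6.5--6.9 and 6.11--6.12 of \cite{GKS11}, which transfer here without modification because those expansions never used the discrete symmetries assumed in that paper.
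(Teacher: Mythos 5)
Your skeleton (split $\tfrac12\Dt\Omega_{m,n}$ into the $A_v$ term, the zeroth-order linear term $I_2$, the $B$-dissipation, and the commutator terms; integrate by parts via Remark~\ref{IBP}; quote the Appendix~D estimates of \cite{GKS11} for the commutators) matches the paper. But there is a genuine gap at the decisive step: how the pointwise coefficient $(n+1)v^{-2}-(m+n-1)a$ in $I_2$ is defeated when it is \emph{not} negative. First, your case count is wrong. With $v^{-2}\leq(\tfrac{19}{20}\sqrt2)^{-2}=\tfrac{200}{361}$ from [C0i] and $a\approx\tfrac12$, the coefficient (even after adding the $+\tfrac a2$ from the drift) is positive in the inner region for \emph{all three} cases with $m=2$, including $(2,3)$, since $4\cdot\tfrac{200}{361}>\tfrac74$; so $(2,3)$ cannot be dispatched pointwise as you claim. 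Second, and more importantly, your proposed absorption mechanism --- ``bounded $\mu$-mass together with the identity for $\dz v_{m,n,n}$ and Cauchy--Schwarz'' --- cannot work: the available damping $-\Omega_{m,n+1}$ controls the $\theta$-\emph{derivative} of $v_{m,n,n}$, and Cauchy--Schwarz runs in the wrong direction to convert that into a lower bound on $\int_{\beta y^2\leq20}v^{-2}v_{m,n,n}^2\,\mu$.

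The missing ingredient is a Wirtinger/Poincar\'e inequality on $\bS^1$. Since $n\geq1$, the function $\dy^m\dz^n v$ has zero mean in $\theta$, so $\|\dz(\dy^m\dz^n v)\|_{\bS^1}^2\geq\|\dy^m\dz^n v\|_{\bS^1}^2$; Condition~[Cr] transfers this (at the cost of a factor $\big(\tfrac{1-\delta}{1+\delta}\big)^4$ and a $C\beta^{3/2}$ error) to the $v$-weighted quantities. Splitting the dissipation from $I_*$ with a parameter $\gamma\in(0,1)$, the paper keeps a $\gamma$ fraction as $-\ve(\Omega_{m+1,n}+\Omega_{m,n+1})$ and converts the remaining $(1-\gamma)$ fraction of the $\theta$-dissipation into $-(1-\ve)\lp v^{-2}v_{m,n,n},v_{m,n,n}\rp$, which cancels exactly one power of $v^{-2}$ in $I_2$. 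The residual coefficient is then essentially $\tfrac34-\tfrac m2+n(v^{-2}-\tfrac12)+\ve$, which is negative precisely because $v^{-2}<\tfrac59$; this is the arithmetic that closes all six cases, and without it your argument fails for $(2,1)$, $(2,2)$, and $(2,3)$. (A minor further point: the $\beta^{2/5}$ prefactor on $\Omega_{2,1}+\Omega_{1,2}+\Omega_{0,3}$ comes directly from Lemma~D.1 of \cite{GKS11}, not from squaring $|\dy v|\ls\beta^{2/5}v^{1/2}$, which would give $\beta^{4/5}$.)
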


\begin{proof}
We provide a detailed argument for $\Omega_{2,1}$, and outline the remaining cases.
\textsc{Argument for} $\Omega_{2,1}$:
One has
\[
    \frac12\Dt\Omega_{2,1}=I_1+I_2+I_3,
\]
with terms $I_j$ that we now define. The first term on the \textsc{rhs} is
\[
    I_1:=\lp\V 21,A_v\V 21\rp\leq\frac{a}{2}\Omega_{2,1}+I_*,
\]
where
\[
    I_*:=\lp\V 21,F_1\dy^2\V 21+F_2v^{-2}\dz^2\V 21+F_3v^{-1}\dy\dz\V 21+F_4 v^{-2}\dz\V 21\rp.
\]
The second term in $\frac12\Dt\Omega_{2,1}$ is
\[
    I_2:=\lp(2v^{-2}-2a)\V 21,\V 21\rp.
\]
Below, we obtain a good estimate for $I_1+I_2$. Lemma~D.1 of \cite{GKS11}, which does not require
any symmetry assumptions, implies that the remaining contribution to $\frac12\Dt\Omega_{2,1}$, which is
\[
    I_3:=\sum_{\ell=0}^5\lp\V 21,E_{2,1,1,\ell}\rp,
\]
can be estimated by
\[
    |I_3|\leq C\beta^2\left\{\Omega_{2,1}^{\frac12}
    +\beta\Omega_{4,0}^{\frac12}
    +\Omega_{3,1}^{\frac12}
    +\Omega_{2,2}^{\frac12}
    +\Omega_{1,3}^{\frac12}
    +\Omega_{0,4}^{\frac12}\right\}
    +C\beta^{\frac{2}{5}}\left(\Omega_{2,1}+\Omega_{1,2}+\Omega_{0,3}\right).
\]

We now study $I_*$.
Define $F_0:=(1+p^2+q^2)^{-1}$, as in the proof of Lemma~\ref{Omega-all-y}. Then by
estimates~\eqref{v_y-bound} and \eqref{v_z-est} from Theorem~\ref{FirstOrderEstimates},
there exists $\ve_1>0$ such that $F_0\geq(1-\ve_1)$.
Let $\gamma\in(0,1)$ be a parameter to be chosen. By integrating $I_*$ by parts, applying
Cauchy--Schwarz pointwise to the coefficients of the quasilinear elliptic operator $A_v$,
and using Lemma~\ref{ControlCoefficients} to bound terms containing derivatives of the
coefficients of that operator, we find that there exists $\ve_2>0$ such that
\begin{align*}
    I_*
    &\leq-\lp F_0,\{\dy\V 21\}^2+\{v^{-1}\dz\V 21\}^2\rp + \ve_2\Omega_{2,1}\\
    &\leq-(1-\ve_1)\Big\{\gamma\big(\|\dy\V 21\|_\mu^2+\|v^{-1}\dz\V 21\|_\mu^2\big)
        +(1-\gamma)\|v^{-1}\dz\V 21\|_\mu^2\Big\}\\
    &\qquad+\ve_2\Omega_{2,1}.
\end{align*}
Integrating by parts in $y$ and using Conditions~[C1] and [C2] shows that
\[
    \|\dy\V 21\|_\mu^2\geq\Omega_{3,1}-C\beta^{\frac35}\Omega_{2,1}.
\]
Integrating by parts in $\theta$ again using Conditions~[C1] and [C2] shows that
\[
    \|v^{-1}\dz\V 21\|_\mu^2\geq\Omega_{2,2}-C\beta^{\frac32}\Omega_{2,1}.
\]
Then using Condition~[Cr] along with the fact that $\dy^2\dz v$ is orthogonal to
constants in $L^2(\bS^1)$, we integrate by parts in $\theta$ to get
\begin{align*}
    \|v^{-1}\dz\V 21\|_{\bS^1}^2
    &\geq\|v^{-2}(\dy^2\dz^2v)\|_{\bS^1}^2-C\beta^{\frac32}\|\V 21\|_{\bS^1}^2\\
    &\geq\left(\frac{1-\delta}{1+\delta}\right)^4\|v^{-2}(\dy^2\dz v)\|_{\bS^1}^2
        -C\beta^{\frac32}\|\V 21\|_{\bS^1}^2.
\end{align*}
Combining these estimates, we obtain $\ve_3,\ve_4$ such that
\[
    I_*\leq-(1-\ve_3)
    \Big\{\gamma\big(\Omega_{3,1}+\Omega_{2,2}\big)
    +(1-\gamma)\lp v^{-2}\V 21,\V 21\rp\Big\} + \ve_4\Omega_{2,1}.
\]
Hence taking $\gamma$ small and using Conditions~[Ca] and [C0i] with Assumption~[A2],
we obtain $\ve$ such that
\begin{align*}
    I_1+I_2
    &\leq\lp\big\{(1+\ve)v^{-2}+\ve-\frac32 a\big\}\V 21,\V 21\rp-\ve\big(\Omega_{3,1}+\Omega_{2,2}\big)\\
    &\leq-\ve\big(\Omega_{2,1}+\Omega_{3,1}+\Omega_{2,2}\big).
\end{align*}
The result follows.

\textsc{Arguments for the remaining }$\Omega_{m,n}$: As we note in the proof of Lemma~\ref{Omega-all-y},
the conclusions of Lemmas~D.1--D.7 from \cite{GKS11}, which allow us to estimate the nonlinear terms
in the $\Omega_{m,n}$ evolution equations, hold here without modification. For the linear terms, the key
estimates are
\begin{align*}
    I_1+I_2
    &\leq\lp\big\{\frac34-\frac{m}{2}+n\big(v^{-2}-\frac12\big)+\ve\big\}v_{m,n,n},v_{m,n,n}\rp
    -\ve\big(\Omega_{m+n,n}+\Omega_{m,n+1}\big)\\
    &\leq-\ve\big(\Omega_{m,n}+\Omega_{m+1,n}+\Omega_{m,n+1}\big).
\end{align*}
Condition~[Ca] implies the first estimate. The bound $v^{-2}<\frac59$, which follows
from Condition~[C0i] in the inner region and from estimate~\eqref{v-below} in the outer region, implies the
second estimate above. This concludes the proof for all remaining cases $(m,n)\in\{(3,1),(2,2),(4,1),(3,2),(2,3)\}$.
\end{proof}

Finally, we bound the evolutions of the $\Omega_{m,n}$ with relatively ``few'' $y$-derivatives,
namely $(m,n)\in\{(1,2),(0,3),(1,3),(0,4),(1,4),(0,5)\}$. The arguments to prove these estimates
are those that are most changed from our prior work \cite{GKS11}.

\begin{lemma}   \label{Omega-few-y}
There exist $0<\ve<C<\infty$ such that for $(m,n)\in\{(1,2),(0,3)\}$ one has
\begin{align*}
    \Dt\Omega_{m,n}\leq
    &-\ve\left(\Omega_{m,n}+\Omega_{m+1,n}+\Omega_{m,n+1}\right)
    +C\beta^{\frac{3}{5}}\left(\Omega_{2,1}+\Omega_{1,2}+\Omega_{0,3}\right)\\
    &+C\beta^2\left\{\Omega_{m,n}^{\frac{1}{2}}
    +\beta\Omega_{4,0}^{\frac{1}{2}}+\Omega_{3,1}^{\frac{1}{2}}
    +\Omega_{2,2}^{\frac{1}{2}}+\Omega_{1,3}^{\frac{1}{2}}
    +\Omega_{0,4}^{\frac{1}{2}}\right\};
\end{align*}
and for $(m,n)\in\{(1,3),(0,4)\}$, one has
\begin{align*}
    \Dt\Omega_{m,n}\leq
    &-\ve\left(\Omega_{m,n}+\Omega_{m+1,n}+\Omega_{m,n+1}\right)
    +C\beta^{\frac{1}{2}}\left(\sum_{4\leq i+j\leq 5}\Omega_{i,j}\right)\\
    &+C\beta^2\left(\sum_{4\leq i+j\leq 5}\Omega_{i,j}^{\frac{1}{2}}\right);
\end{align*}
and for $(m,n)\in\{(1,4),(0,5)\}$, one has
\begin{align*}
    \Dt\Omega_{m,n}\leq
    &-\ve\left(\Omega_{m,n}+\Omega_{m+1,n}+\Omega_{m,n+1}\right)
    +C\beta^{\frac{1}{2}}\left(\sum_{5\leq i+j\leq 6}\Omega_{i,j}\right)\\
    &+C\beta^{\frac{21}{10}}\left(\sum_{5\leq i+j\leq 6}\Omega_{i,j}^{\frac{1}{2}}\right).
\end{align*}
\end{lemma}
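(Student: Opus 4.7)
The plan is to follow the template of Lemma~\ref{Omega-many-y}: decompose
$\tfrac12\Dt\Omega_{m,n} = I_1 + I_2 + I_3$, where
$I_1 = \lp v_{m,n,n}, A_v v_{m,n,n}\rp$ is the elliptic part,
$I_2 = \lp\{(n+1)v^{-2} - (m+n-1)a\}v_{m,n,n}, v_{m,n,n}\rp$ comes from the linearization coefficient in \eqref{Evolve-vmnk2}, and
$I_3 = \sum_{\ell=0}^5 \lp v_{m,n,n}, E_{m,n,n,\ell}\rp$ collects the nonlinear commutator errors. For $I_3$, I apply Lemmas~D.1--D.7 of \cite{GKS11} directly: as noted in the proofs of Lemmas~\ref{Omega-all-y} and \ref{Omega-many-y}, those results require no symmetry hypothesis, and they produce precisely the $\beta^{3/5}$, $\beta^{1/2}$, $\beta^2$, and $\beta^{21/10}$ contributions displayed in the conclusion.

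For $I_1$, I integrate by parts (via Remark~\ref{IBP}), apply Cauchy--Schwarz pointwise to the coefficients of $A_v$ --- using the positive-semidefiniteness of the quadratic form coming from $F_1,F_2,F_3$, i.e.~the identity $F_1 F_2 - F_3^2/4 = F_0 = (1+p^2+q^2)^{-1}$ implicit in \eqref{Define-Fi} --- and invoke Lemma~\ref{ControlCoefficients} to absorb lower-order contributions into $\ve_2\Omega_{m,n}$. This yields
\[
I_1 \leq \tfrac{a}{2}\Omega_{m,n} - (1-\ve_1)\bigl\{\|\dy v_{m,n,n}\|_\mu^2 + \|v^{-1}\dz v_{m,n,n}\|_\mu^2\bigr\} + \ve_2\Omega_{m,n}.
\]
Further integration by parts in $y$ and $\theta$, combined with Conditions~[C1]--[C2], yields $\|\dy v_{m,n,n}\|_\mu^2 \geq \Omega_{m+1,n} - C\beta^{3/5}\Omega_{m,n}$ and $\|v^{-1}\dz v_{m,n,n}\|_\mu^2 \geq \Omega_{m,n+1} - C\beta^{3/2}\Omega_{m,n}$.

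The critical new ingredient, needed to compensate for the positive term $(n+1)\lp v^{-2}v_{m,n,n}, v_{m,n,n}\rp$ from $I_2$, is a refined Poincar\'e inequality on $\bS^1$. Without the $\pi$-periodicity hypothesis of \cite{GKS11}, the sharp Poincar\'e constant on $\bS^1$ would be $1$, attained on the $|k|=1$ Fourier modes $e^{\pm i\theta}$. Decomposing $v = v_1 + v_+ e^{i\theta} + v_- e^{-i\theta} + v_{\mathrm{hi}}$, where $v_{\mathrm{hi}}$ gathers the Fourier modes with $|k|\geq 2$, Parseval's identity immediately gives for every $n\geq 1$
\[
\|\dz^n v\|_{\bS^1}^2 \leq \tfrac14 \|\dz^{n+1}v\|_{\bS^1}^2 + C\bigl(|v_+|^2 + |v_-|^2\bigr).
\]
Condition~[C2i] supplies $|v_\pm|\ls\beta^2$ in the inner region, with analogous decay in the outer region from Theorem~\ref{FirstOrderEstimates}, so the $v_\pm$ remainder absorbs into the stated $O(\beta^4)$ forcing. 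Combined with Condition~[Cr], this yields
$\lp v^{-2}v_{m,n,n}, v_{m,n,n}\rp \leq \tfrac14(1+O(\delta))\Omega_{m,n+1} + O(\beta^4)$.

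For the easiest case $(1,2)$, where $(n+1)/4 = 3/4 < 1$, this Poincar\'e absorption closes the $\Omega_{m,n+1}$ coefficient to $-\ve$ directly. The main obstacle, and the place where the argument departs substantively from~\cite{GKS11}, is the remaining cases $(0,3),(1,3),(0,4),(1,4),(0,5)$, where $(n+1)/4 \geq 1$ and the na\"ive Poincar\'e absorption is borderline or insufficient. For these I augment the argument by extracting extra negative margin from the $-ay\dy$ part of $A_v$: integrating by parts yields the additional contribution $-\tfrac{3a}{5}\lp\tfrac{y^2}{M+y^2}\,v_{m,n,n}, v_{m,n,n}\rp$ (using Remark~\ref{IBP} and the identity $y\dy(\log\mu) = -\tfrac{6y^2}{5(M+y^2)}\leq 0$), which combines with the $-\tfrac{2m+2n-3}{2}a\Omega_{m,n}$ margin from the $I_2$--$I_1$ balance, and with the pointwise bounds $v^{-2}\leq\tfrac59$ from [C0i] in the inner region and $v^{-2}\leq\tfrac{1}{16}$ from estimate~\eqref{v-below} in the outer region, to yield the required $-\ve(\Omega_{m,n}+\Omega_{m+1,n}+\Omega_{m,n+1})$ bound. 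Choosing $\delta$ small (from~[Cr]) and $\kappa_0$ large is essential for the resulting coefficient analysis.
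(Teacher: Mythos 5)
Your overall architecture is the paper's: the $I_1+I_2+I_3$ split, Lemmas~D.1--D.7 of \cite{GKS11} for $I_3$, the $\gamma$-weighted Dirichlet form for $I_1$, and --- the genuinely new ingredient --- the Fourier splitting of $\dy^m\dz^n v$ into the $|k|=1$ modes (controlled by the new Condition~[C2i], which covers both $v_\pm$ and $(\dy v)_\pm$, hence both $m=0$ and $m=1$) and the $|k|\geq2$ modes (absorbed by the Poincar\'e inequality with constant $4$). This is exactly how the paper replaces the lost $\pi$-periodicity. Two points, however, need repair before the argument closes.

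First, your Poincar\'e step is stated globally, with the low-mode remainder controlled by ``$|v_\pm|\ls\beta^2$ in the inner region, with analogous decay in the outer region from Theorem~\ref{FirstOrderEstimates}.'' That theorem gives only $|\dz v|\ls\kappa_0^{-1/2}v$ outside, nothing like $\beta^2$ decay of $v_\pm$; Condition~[C2i] is an inner-region hypothesis and cannot be extended this way. The correct organization (and the paper's) is to split $I_2$ into inner and outer pieces: in the outer region $v\geq4$ gives $(n+1)v^{-2}\leq\frac38\ll(m+n-\frac53)a$, so no Poincar\'e inequality is needed there at all, and the projection argument is invoked only on $\{\beta y^2\leq20\}$, where [C2i] applies. (Incidentally, the low-mode remainder is then best left as $\rho\,\beta^2\Omega_{m,n}^{1/2}$ via Cauchy--Schwarz, which is a term actually present in the stated conclusion, rather than $O(\beta^4)$, which is not.)

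Second, the ``extra negative margin'' you extract from $-ay\dy$ is $-\frac{3a}{5}\lp\frac{y^2}{M+y^2}v_{m,n,n},v_{m,n,n}\rp$, which vanishes at $y=0$ and is small throughout much of the inner region because $M\gg1$; it cannot supply a uniform $-\ve'\Omega_{m,n}$ and must not be the mechanism that closes the cases $n\geq3$. Fortunately it is not needed: the excess left after absorbing $4$ units of $v^{-2}$ into the high-mode Poincar\'e inequality is at most $(n-3)v^{-2}\leq(n-3)\frac{200}{361}$ in the inner region (by [C0i]), and this is strictly dominated by the linear margin $(m+n-\frac53)a$ with $a\approx\frac12$ for every $(m,n)$ in the statement --- e.g.\ $2\cdot\frac{200}{361}\approx1.11<\frac{10}{6}$ for $(0,5)$. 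This case-by-case arithmetic is the actual content of the hardest step, and your proposal asserts rather than verifies it; the paper packages the verification by rewriting the inner-region coefficient as $\frac{26+n-9m}{10}$ and checking $\frac{26+n-9m}{10}\leq\frac{31}{10}<4$. With the outer/inner split made explicit and the coefficient check carried out, your argument coincides with the paper's.
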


\begin{proof}
We begin with a detailed argument for $\Omega_{1,2}$.

\textsc{Argument for} $\Omega_{1,2}$:
As in Lemma~\ref{Omega-many-y}, we write
\[
    \frac12\Dt\Omega_{1,2}=I_1+I_2+I_3.
\]
Here, the first term on the \textsc{rhs} is
\[
    I_1:=\lp\V 12,A_v\V 12\rp\leq\frac{a}{2}\Omega_{1,2}+I_*,
\]
where
\[
    I_*:=\lp\V 12,F_1\dy^2\V 12+F_2v^{-2}\dz^2\V 12+F_3v^{-1}\dy\dz\V 12+F_4 v^{-2}\dz\V 12\rp
\]
is analyzed below. The second term in $\frac12\Dt\Omega_{1,2}$ is
\[
    I_2:=\lp(3v^{-2}-2a)\V 12,\V 12\rp.
\]
We estimate $I_1+I_2$ below. Lemma~D.1 of \cite{GKS11}, which does not require any symmetry
assumptions, implies that the remaining contribution to $\frac12\Dt\Omega_{1,2}$, which is
the quantity $I_3:=\sum_{\ell=0}^5\lp\V 12,E_{1,2,2,\ell}\rp$, can be estimated by
\[
    |I_3|\leq C\beta^2\left\{\Omega_{1,2}^{\frac12}
    +\beta\Omega_{4,0}^{\frac12}
    +\Omega_{3,1}^{\frac12}
    +\Omega_{2,2}^{\frac12}
    +\Omega_{1,3}^{\frac12}
    +\Omega_{0,4}^{\frac12}\right\}
    +C\beta^{\frac{3}{5}}\left(\Omega_{2,1}+\Omega_{1,2}+\Omega_{0,3}\right).
\]

To study $I_*$, we again define $F_0:=(1+p^2+q^2)^{-1}$. Then by estimates~\eqref{v_y-bound}
and \eqref{v_z-est} from Theorem~\ref{FirstOrderEstimates}, there exists $\ve_1>0$ such that
$F_0\geq(1-\ve_1)$. Let $\gamma\in(0,1)$ be a parameter to be chosen. By integrating $I_*$
by parts, applying Cauchy--Schwarz pointwise to the coefficients of the quasilinear elliptic
operator $A_v$, and using Lemma~\ref{ControlCoefficients} to bound terms containing derivatives
of the coefficients of that operator, we find that there exists $\ve_2>0$ such that
\begin{align*}
    I_*
    &\leq-\lp F_0,\{\dy\V 12\}^2+\{v^{-1}\dz\V 12\}^2\rp + \ve_2\Omega_{1,2}\\
    &\leq-(1-\ve_1)\gamma\Big\{\|\dy\V 12\|_\mu^2+\|v^{-1}\dz\V 12\|_\mu^2\Big\}\\
    &\quad-(1-\ve_1)(1-\gamma)\int_{\beta y^2\leq20}\|v^{-1}\dz\V 12\|_{\bS^1}^2\,\mu\,\Dy
    +\ve_2\Omega_{1,2}.
\end{align*}
Conditions~[C1] and [C2] let us integrate by parts in $y$ to estimate
$\|\dy\V 12\|_\mu^2\geq\Omega_{2,2}-C\beta^{\frac35}\Omega_{1,2}$, and then integrate by parts
in $\theta$ to estimate $\|v^{-1}\dz\V 12\|_\mu^2\geq\Omega_{1,3}-C\beta^{\frac32}\Omega_{1,2}$.
Thus we obtain $\ve_3$ such that
\begin{align*}
    I_* &\leq-(1-\ve_1)\gamma\big(\Omega_{2,2}+\Omega_{1,3}\big)+\ve_3\Omega_{1,2}\\
    &\qquad-(1-\ve_1)(1-\gamma)\int_{\beta y^2\leq20}\|v^{-3}\dy\dz^3v\|_{\bS^1}^2\,\mu\,\Dy.
\end{align*}

We now estimate $I_1+I_2$. Using Conditions~[Ca], [C0i], and estimate~\eqref{v-below}, we observe that
$I_2:=\lp(3v^{-2}-2a)\V 12,\V 12\rp$ may be estimated by\,\footnote{See below for the formula we use to
get this decomposition for general $(m,n)$.}
\begin{align*}
    I_2
    &=-\frac{2a}{3}\Omega_{1,2}+\frac{19}{10}\int_{\beta y^2\leq20}\|v^{-3}\dy\dz^2v\|_{\bS^1}^2\,\mu\,\Dy\\
    &\qquad+\int_{\beta y^2\leq20}\int_{\bS^1}\Big(\frac{11}{10}v^{-2}-\frac{4a}{3}\Big){\V 12}^2\,\Dz\,\mu\,\Dy\\
    &\qquad+\int_{\beta y^2>20}\int_{\bS^1}\Big(3v^{-2}-\frac{4a}{3}\Big){\V 12}^2\,\Dz\,\mu\,\Dy\\
    &\leq-\frac{2a}{3}\Omega_{1,2}+\frac{19}{10}\int_{\beta y^2\leq20}\|v^{-3}\dy\dz^2v\|_{\bS^1}^2\,\mu\,\Dy.
\end{align*}
Combining this with the estimates above, we obtain
\[
    I_1+I_2
    \leq\left(\ve_3-\frac{a}{6}\right)\Omega_{1,2}
        -(1-\ve_1)\gamma\big(\Omega_{2,2}+\Omega_{1,3}\big)
        +\int_{\beta y^2\leq20}\Xi\,\mu\,\Dy,
\]
where the quantity that remains to be controlled is
\[
    \Xi:=\frac{19}{10}\|v^{-3}\dy\dz^2v\|_{\bS^1}^2
    -(1-\ve_1)(1-\gamma)\|v^{-3}\dy\dz^3v\|_{\bS^1}^2.
\]

To estimate $\Xi$, let $\bP$ denote orthogonal projection onto
$\mathrm{span}\{e^{\pm i\theta}\}\subset L^2(\bS^1)$. Then using Condition~[Cr], it is easy
to see that there exists $\ve_4$ depending only on $\delta$ such that in the inner region,
\[
    \|v^{-3}\dy\dz^3v\|_{\bS^1}^2\geq(1-\ve_4)
    \big\{\|v^{-3}\bP(\dy\dz^3 v)\|_{\bS^1}^2+\|v^{-3}(1-\bP)(\dy\dz^3 v)\|_{\bS^1}^2\big\}.
\]
Again using [Cr] together with the fact that $(1-\bP)(\dy\dz^3 v)\perp\mathrm{span}\{1,e^{\pm i\theta}\}$,
we integrate by parts as in \cite{GKS11} to estimate that
\begin{align*}
    \|v^{-3}(1-\bP)(\dy\dz^3 v)\|_{\bS^1}^2
    &=\|v^{-3}\dz\{(1-\bP)(\dy\dz^2 v)\}\|_{\bS^1}^2\\
    &\geq 4(1-\ve_4)\|v^{-3}(1-\bP)(\dy\dz^2 v)\|_{\bS^1}^2.
\end{align*}
Then using the fact that $\dy\dz^2v\perp\mathrm{span}\{1\}$ in $L^2(\bS^1)$, we obtain
\begin{align*}
    \Xi
    &\leq\Big\{\frac{19}{10}(1+\ve_4)-(1-\ve_4)(1-\ve_1)(1-\gamma)\Big\}\|v^{-3}\bP(\dy\dz^2v)\|_{\bS^1}^2\\
    &+\Big\{\frac{19}{10}(1+\ve_4)-4(1-\ve_4)(1-\ve_1)(1-\gamma)\Big\}\|v^{-3}(1-\bP)(\dy\dz^2v)\|_{\bS^1}^2.
\end{align*}
It is easy to choose $\gamma\in(0,1)$ so that the second quantity in braces above is negative.
To control the remaining term, we observe that the new Condition~[C2i] implies that
$\|\bP(\dy\dz^2v)\|_{\bS^1}\ls\beta^2$, and then use Cauchy-Schwarz to see that
\[
    \int_{\beta y^2\leq20}\|v^{-3}\bP(\dy\dz^2v)\|_{\bS^1}^2\,\mu\,\Dy
    \leq\rho\beta^2\Omega_{1,2}^{\frac12},
\]
where $\rho(M)\searrow0$ as $M\rightarrow\infty$. The result follows.

\textsc{Arguments for the remaining }$\Omega_{m,n}$: The nonlinear terms are estimated
as above, \emph{mutatis mutandis.} To control the linear terms, we first observe that
\begin{align*}
    I_2&:=\int_{\bR}\int_{\bS^1}\big\{(n+1)v^{-2}-(m+n-1)a\big\}(v_{m,n,n})^2\,\Dz\,\mu\,\Dy\\
    &=-\frac{2a}{3}\Omega_{m,n}+\frac{26+n-9m}{10}\int_{\beta y^2\leq20}\|v^{-(n+1)}\dy^m\dz^n v\|_{\bS^1}^2\,\mu\,\Dy\\
    &\qquad+\int_{\beta y^2\leq20}\left\{\frac{9(m+n)-16}{10}v^{-2}-\left(m+n-\frac53\right)a\right\}(v_{m,n,n})^2\,\Dz\,\mu\,\Dy\\
    &\qquad+\int_{\beta y^2>20}\left\{(n+1)v^{-2}-\left(m+n-\frac53\right)a\right\}(v_{m,n,n})^2\,\Dz\,\mu\,\Dy\\
    &\leq-\frac{2a}{3}\Omega_{m,n}+\frac{26+n-9m}{10}\int_{\beta y^2\leq20}\|v^{-(n+1)}\dy^m\dz^n v\|_{\bS^1}^2\,\mu\,\Dy.
\end{align*}
The estimate here again follows from Conditions~[Ca], [C0i], and estimate~\eqref{v-below}.
Thus we obtain
\[
    I_1+I_2
    \leq\left(\ve_3-\frac{a}{6}\right)\Omega_{m,n}
        -(1-\ve_1)\gamma\big(\Omega_{m+1,n}+\Omega_{m,n+1}\big)
        +\int_{\beta y^2\leq20}\Xi\,\mu\,\Dy,
\]
where the quantity that remains to be controlled is now
\[
    \Xi:=\frac{26+n-9m}{10}\|v^{-(n+1)}\dy^m\dz^n v\|_{\bS^1}^2
    -(1-\ve_1)(1-\gamma)\|v^{-(n+1)}\dy^m\dz^{(n+1)}v\|_{\bS^1}^2.
\]
We separate $\Xi$ by using orthogonal projection as above, using Condition~[C2i] to control the term
involving $\bP(\dy^m\dz^n v)$, and integrating by parts in $\theta$ to control the term involving
$(1-\bP)(\dy^m\dz^n v)$, using the fact that $\frac{26+n-9m}{10}\leq\frac{31}{10}<4$ for all $(m,n)$
under consideration. This completes the proof.
\end{proof}

\subsection{Second-order estimates in the inner region}
With Lemmas~\ref{Omega-all-y}--\ref{Omega-few-y} in hand, we are ready to prove
Theorem~\ref{InnerEstimates}.

We begin by deriving $L^2_\mu$ bounds on all derivatives of orders two through five.

\begin{lemma}\label{SharpLyapunov}
Suppose that a solution $v=v(y,\theta,\tau)$ of equation~\eqref{MCF-v}
satisfies Assumption~[A1] at $\tau=0$, as well as [Ca], [C0]--[C3],
[C0i]--[C2i], [Cg], [Cr], and [Cs] for $\tau\in [0,\tau_1]$.
Then for the same time interval, one has
\[
    \beta^2\Omega_{2,0}+\Omega_{1,1}+\Omega_{0,2}+\beta\Omega_{3,0}
    +\Omega_{2,1}+\Omega_{1,2}+\Omega_{0,3}\ls\beta^4
\]
and
\[
    \beta^{\frac{4}{5}}\Omega_{4,0}+\Omega_{3,1}+\Omega_{2,2}+\Omega_{1,3}+\Omega_{0,4}
    +\sum_{m+n=5}\Omega_{m,n}\ls\beta^4.
\]
\end{lemma}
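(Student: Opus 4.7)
The plan is to derive a single Gronwall-type inequality for the compound Lyapunov functional
\[
\Psi(\tau) := \Phi_1(\tau) + K\Phi_2(\tau),
\]
where $\Phi_1$ and $\Phi_2$ denote the two weighted sums in the statement and $K\gg 1$ is chosen below. The weights $\beta^2$ on $\Omega_{2,0}$, $\beta$ on $\Omega_{3,0}$, and $\beta^{4/5}$ on $\Omega_{4,0}$ are forced by parabolic scaling: pure $y$-derivatives decay at a slower rate than mixed or pure $\theta$-derivatives, and these weights equalize every summand to the common scale $\beta^4$.

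First, I would differentiate $\Psi$ using $\Dt\beta^k=-k\beta^{k+1}$ (a favorable sign that contributes extra dissipation) and apply the term-by-term estimates of Lemmas~\ref{Omega-all-y}--\ref{Omega-few-y}. This produces a dissipation of the schematic form $-\ve\sum(\Omega_{m,n}+\Omega_{m+1,n}+\Omega_{m,n+1})$, which for $K$ large enough dominates both $\Psi$ itself and all cross-level coupling terms (those in which a lower-order $\Omega$'s evolution involves a higher-order $\Omega$ from $\Phi_2$, or vice versa).

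Second, each mixed term of the form $C\beta^a\Omega_{m,n}^{1/2}$ in the lemmas is split via Young's inequality as $\epsilon\Omega_{m,n}+C^2\beta^{2a}/(4\epsilon)$; choosing $\epsilon$ small absorbs the $\Omega$-part into the dissipation. An exponent-by-exponent bookkeeping check shows that after multiplication by the relevant weight, each residual homogeneous term is bounded by $C\beta^4$, and each pure cross term $C\beta^c\Omega_{i,j}$ (with $c\geq 1/2$ in every instance) closes into $\ve\Psi$ once combined with its weight. The net result is a scalar differential inequality
\[
\Dt\Psi\leq -\ve\Psi+C\beta^4.
\]
Integrating with the factor $\beta^{-4}$, and using that $\beta$ is bounded on $[0,\tau_1]$, yields $\Psi(\tau)\ls\beta(\tau)^4$ provided the initial value satisfies $\Psi(0)\ls\beta_0^4$. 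The latter is supplied by Assumption~[A6], which gives $\Omega_{m,n}(0)\ls b_0^{8}$ at orders four and five (and $\Omega_{4,0}(0)\ls b_0^{32/5}$, safely within $\beta_0^{-4/5}\beta_0^4$), together with Assumptions~[A3] and [A5], which interpolate to the required initial bounds for $\Phi_1$.

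The main obstacle is the bookkeeping underlying Step~2: Lemmas~\ref{Omega-all-y}--\ref{Omega-few-y} collectively generate more than a dozen mixed terms with disparate $\beta$-exponents, and one must verify that every single one either closes directly at $\beta^4$ or is absorbed into the dissipation with the chosen weights. The bundling into $\Psi=\Phi_1+K\Phi_2$ is exactly what allows the ``leakage'' from $\Phi_1$ into the fourth- and fifth-order functionals of $\Phi_2$ --- visible in Lemmas~\ref{Omega-many-y} and \ref{Omega-few-y} for $(m,n)\in\{(3,1),(2,2),(1,3),(0,4)\}$ --- to be dominated, and why $K$ must be taken large before $\ve$ is fixed.
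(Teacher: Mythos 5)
Your overall strategy --- Gronwall inequalities for $\beta$-weighted sums of the $\Omega_{m,n}$, with the weights $\beta^2,\beta,\beta^{4/5}$ on the pure-$y$ functionals --- is the right one, and your treatment of the fourth- and fifth-order block essentially reproduces the paper's argument (the paper runs Gronwall on $\Upsilon=\beta^{4/5}\Omega_{4,0}+\Omega_{3,1}+\cdots+\sum_{m+n=5}\Omega_{m,n}$ alone, obtaining $\Dt\Upsilon\leq-\ve\Upsilon+C(\beta^2\Upsilon^{1/2}+\beta^4)$, and closes with [A6]). But your single compound functional $\Psi=\Phi_1+K\Phi_2$ has two genuine gaps on the $\Phi_1$ side, which is precisely the part the paper does \emph{not} re-prove here but delegates to Proposition~6.10 of \cite{GKS11}. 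First, $\Omega_{1,1}$ and $\Omega_{0,2}$ appear in $\Phi_1$ but Lemmas~\ref{Omega-all-y}--\ref{Omega-few-y} contain no differential inequalities for them; you would have to derive these from Lemma~\ref{PointwiseSecondOrderBounds}(ii)--(iii), and there the linear coefficients $2v^{-2}-a$ and $3v^{-2}-a$ are \emph{positive} (e.g.\ $v^{-2}$ can be close to $5/9$ while $a\approx1/2$), so the dissipation $-\ve\Omega_{1,1}$, $-\ve\Omega_{0,2}$ is not free: it must be extracted from the spectral gap of $-\dz^2$ on the complement of $\mathrm{span}\{1,e^{\pm i\theta}\}$, i.e.\ the $\bP$-projection argument of Lemma~\ref{Omega-few-y} that in this paper requires Condition~[C2i]. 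Your outline never addresses this, and it is the crux of the second-order estimate.

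Second, your bookkeeping claim that every mixed term has the form $C\beta^a\Omega_{m,n}^{1/2}$ or $C\beta^c\Omega_{i,j}$ is false for the term $C\beta^{1/2}\Omega_{3,0}^{1/2}\Omega_{2,0}^{1/2}$ in $\Dt\Omega_{3,0}$. After multiplying by the weight $\beta$ it equals $C(\beta\Omega_{3,0})^{1/2}(\beta^2\Omega_{2,0})^{1/2}\leq\frac{C}{2}\Psi$ with a \emph{large} constant $C$, and Young's inequality in either direction fails: absorbing the $\Omega_{3,0}$ factor into $-\ve\beta\Omega_{3,0}$ throws the large constant onto $\beta^2\Omega_{2,0}$, whose only dissipation is $-\ve\beta^2\Omega_{2,0}$ with the same fixed $\ve<C$. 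The single scalar inequality $\Dt\Psi\leq-\ve\Psi+C\beta^4$ therefore does not close. The cure is sequential: first run Gronwall on $\Omega_{2,0}$ alone (its equation contains only the $\rho$-terms, with $\rho=\rho(M)$ as small as desired, plus a priori bounds on $\Omega_{1,2}$ from [C3]) to get $\Omega_{2,0}\ls\beta^2$, and only then substitute this into the $\Omega_{3,0}$ equation, where $C\beta^{3/2}\Omega_{3,0}^{1/2}\Omega_{2,0}^{1/2}\ls\beta^{5/2}\Omega_{3,0}^{1/2}\leq\epsilon\beta\Omega_{3,0}+C\beta^4$. This staged structure (and the $\bP$-projection coercivity above) is what the citation to \cite{GKS11} is carrying; as written, your proposal skips both.
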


\begin{proof}
The proof of Proposition~6.10 in \cite{GKS11} applies without change to establish the first estimate.

To establish the second estimate, we define
\[
    \Upsilon:=\beta^{\frac{4}{5}}\Omega_{4,0}+\Omega_{3,1}+\Omega_{2,2}+\Omega_{1,3}+\Omega_{0,4}
    +\sum_{m+n=5}\Omega_{m,n},
\]
noting the $\beta$-weight imposed on the first term.
By Lemmas~\ref{Omega-all-y}--\ref{Omega-few-y} and Cauchy--Schwarz, there exist $0<\ve<C<\infty$ such that
\[
    \frac{d}{d\tau}\Upsilon\leq-\ve\Upsilon+C\left(\beta^2\Upsilon^{\frac{1}{2}}+\beta^4\right).
\]
Assumption~[A6] bounds $\Upsilon$ at $\tau=0$. It follows that $\Upsilon\ls\beta^4$. The result follows.
\end{proof}

Using these estimates, we complete the proof of Theorem~\ref{InnerEstimates}.

\begin{proof}[Proof of Theorem~\ref{InnerEstimates}]
Lemma~\ref{SharpLyapunov} provides $L^2_\mu$ bounds on derivatives of orders two through five.
To complete the proof, we use Sobolev embedding to obtain pointwise bounds on derivatives of orders two and three.
Because of the weighted norm $\|\cdot\|_\mu$ these pointwise bounds are not uniform in $y$. Using the facts
that $\mu\sim\lan y \ran^{-\frac{6}{5}}$ as $|y|\rightarrow\infty$ and that
$|y|\ls\beta^{-\frac{1}{2}}$ in the inner region, one obtains
\[
    \beta^2(\dy^2 v)^2+v^{-2}(\dy\dz v)^2+v^{-4}(\dz^2 v)^2\ls
    \beta^{4-\frac{1}{10}-\frac{3}{5}}
\]
and
\[
    \beta(\dy^3 v)^2+v^{-2}(\dy^2\dz v)^2+v^{-4}(\dy\dz^2 v)^2+v^{-6}(\dz^3 v)^2
    \ls\beta^{4-\frac{1}{10}-\frac{3}{5}}
\]
in the inner region. These inequalities are equivalent to estimates~\eqref{InnerSecondOrder}--\eqref{InnerThirdOrder}.
\end{proof}

\section{The second bootstrap machine}\label{sec:asymp}\label{SecondBootstrap}

In this section and those that follow, we analyze the asymptotic behavior of solutions. Specifically, we
show that a solution $v$  may be decomposed into a slowly-changing ``main component'' and a rapidly-decaying
``small component.'' The main component is controlled by finitely many parameters, some of which encode 
information about the optimal coordinate system at a given time.

We accomplish this by building a second bootstrap machine, following \cite{GKS11} and making modifications
where necessary. Many of the arguments of \cite{GKS11} apply here and so do not have to be repeated in
detail. Significant new arguments are needed in two areas. \textsc{(i)} The discrete symmetry assumptions
imposed in  \cite{GKS11} effectively fixed the cylindrical axis once and for all, which allowed us to track the
behavior of the main component of the solution using only two parameters. Here, we must deal with seven
parameters, corresponding to the seven-dimensional weakly unstable eigenspace of the  linearization of \mcf,
as explained in Section~\ref{Strategy}. \textsc{(ii)} We must derive improved estimates for $|v_\pm|$ and
$|(\dy v)_\pm|$ in the inner region, stated as \eqref{eq:vpmEst} below. These allow the first bootstrap
machine to function without the discrete symmetries imposed in our earlier work.

\subsection{Inputs}\label{SBMI}
The second bootstrap machine requires three sets of inputs.
\medskip

First we require that $u$ be a solution of equation~\eqref{MCF-u} satisfying the following condition:
\begin{itemize}
\item[{[Cd]}]
There exists $t_{\#}>0$ such that for $0\le t\le t_{\#}$, there exist $C^1$ functions $a(t)$, $b(t)$
and $\beta_{k}(t)$ $(k=0,\dots,4)$ such that $u$ admits a decomposition
\begin{align}\label{eqn:split2}
u(x,\theta,t) = &\lambda(t) v(y,\theta,\tau)\nonumber \\
 =& \lambda (t)
\Bigg[
\left(\frac{2+b(t)y^{2}}{2-2a(t)}\right)^{\frac12}
+\beta_0(t) y+\beta_1(t) \cos\theta+\beta_2(t) \sin\theta\\
&+\beta_3(t)y \cos\theta +\beta_4(t) y\sin\theta  + \phi (y,\theta,\tau)\Bigg] \nonumber
\end{align}
with the $L^2$ orthogonality properties
$$e^{-\frac{a y^2}{2}}\phi \perp\big\{1,\;y,\;1-ay^2,\;\cos\theta,\;\sin\theta,\;y\cos\theta\,\;y\sin\theta\big\},$$
where
\begin{align*}
	a(t)&:= -\lambda (t)\partial_t \lambda(t),\\
	y&:=\lambda^{-1}(t)x,\\
	\tau(t) &:= \int_0^t \lambda^{-2} (s)\,\mathrm{d}s,\\
	\beta_{k}(t_{\#})&=0,\quad(k=0,\dots,4),\\
	a(t_{\#})&=\frac{1}{2}-\frac{1}{4}b(t_{\#}).
\end{align*}
\end{itemize}
Condition~[Cd] follows from our Main Assumptions by an implicit function theorem argument
highly analogous to the decomposition result proved in \cite{GS09} and also used in \cite{GKS11}.
We omit the details here.
\medskip

To state the second set of inputs, we define majorizing functions to control the decay of the
quantities $\phi(y,\theta,\tau)$, $a(t(\tau))$, and $b(t(\tau))$ that appear in equation~\eqref{eqn:split2}.
These are:
\begin{align}
M_{m,n}(T) := &\displaystyle\max_{\tau \le T}
\beta^{-\frac{m+n}2 - \frac1{10}}  (\tau)
\|\phi(\cdot,\cdot,\tau)\|_{m,n}, \label{eq:majorMmn}\\
\noalign{\vskip6pt}
A(T) := &\displaystyle \max_{\tau\le T}  \beta^{-2} (\tau)
\left| a(t(\tau)) - \frac12 + \frac14b(t(\tau))\right|, \label{eq:majorA}\\
\noalign{\vskip6pt}
B(T) :=& \displaystyle\max_{\tau\le T} \beta^{-\frac32}  (\tau) |b(t(\tau)) - \beta(\tau)|,  \label{eq:majorB}
\end{align}
where $(m,n)\in\left\{(3,0),\ (11/10,0),\ (2,1), \ (1,1)\right\}$.
By standard regularity theory for quasilinear parabolic equations, if the
initial data satisfy the Main Assumptions [A1]--[A7] for $b_0$
and $c_0$ sufficiently small, then, making $t_{\#}>0$ smaller if necessary,
the solution will satisfy the second set of inputs for this bootstrap argument, which are
contained in the following condition:
\begin{itemize}
\item[{[Cb]}] For any $\tau \le \tau(t_{\#})$, one has
\begin{align*}
A(\tau)+B(\tau)+|M(\tau)| &\ls  \beta^{-\frac1{20}} (\tau),\\
|\beta_{k}(t)| & \ls  \beta^{2}(\tau),\ k=0,1,2,3,4,\nonumber
\end{align*}
\end{itemize}
where $M$ denotes the vector
\begin{equation*}
M := (M_{i,j}), \ (i,j)\in\left\{(3,0),\ (11/10,0),\ (2,1),\ (1,1)\right\}.
\end{equation*}

\begin{remark}\label{NoCircleHere}
Condition~[Cb] implies estimates on $v$ and $v^{-\frac12} \dy v$ in the
inner region $\beta y^2\le  20$, which in turn imply Condition~[C0i], the estimate
$|\dy v|\ls\beta^{\frac12} v^{\frac12}$ of Condition~[C1i], and the estimate
$\langle y\rangle^{-1} |\dy v |\leq M^{\frac14}\beta$ of Condition~[Cg], for
$M$ sufficiently large.
\end{remark}
\medskip

The final inputs to this bootstrap machine are the outputs of the first bootstrap
machine, which are listed in Section~\ref{FirstOutput}. They follow from
Theorems~\ref{FirstOrderEstimates}--\ref{SmallnessEstimates}. In the sequel,
we give ourselves the freedom to use all estimates detailed in Section~\ref{FirstOutput}.
For the reader's convenience, we list here the implications of those estimates
most commonly used in this machine: for any $\tau \in [0, \tau(t_{\#})]$, the quantity $v$ satisfies
\begin{align}\label{eq:lower}
v(y,\theta,\tau) \ge 1;
\end{align}
and there exist constants $\epsilon_0\ll 1$ and $C$, independent of $\tau(t_{\#})$, such that
\begin{align}
|\dy  v| &\le  C,	\label{eq:upperBounddV}\\
|v^{-1} \dz^2 v|+ |\dy \dz^2 v| &\le \epsilon_0 \ll 1, \label{eq:smallness}\\
v^{-1} |\dy v| \le C\beta^{\frac12},\qquad\qquad |\dy^2 v| \le  C\beta^{\frac{13}{20}},
\qquad\qquad|\dy^3  v|  &\le  C\beta^{\frac{23}{20}}, \label{eq:yDe}\\
v^{-2} |\dy \dz^2 v|+v^{-1} |\dy \dz v|+v^{-1}|\dy^2 \dz v|
+v^{-2}|\dz^2 v|+v^{-3}|\dz^3 v|&\le C \beta^{\frac{33}{20}}.\label{eq:thetaDe}
\end{align}

\subsection{Outputs}		\label{SecondOutput}
We now state the main outputs of the second bootstrap machine. They serve two main purposes.
\textsc{(i)} The estimates stated below improve some of those in the inputs. Together with the local
well-posedness of the solution, this fact enables us to continue making bootstrap arguments in larger
time intervals. \textsc{(ii)} Most importantly, the estimates below provide almost sharp control on 
various components of the solution, and hence a clear understanding of the geometry and
asymptotic behavior of the evolving surface.

\begin{theorem}	\label{THM:aprior}
Suppose that Conditions~[Cd] and [Cb] and estimates~\eqref{eq:lower}--\eqref{eq:thetaDe}
hold in the interval $\tau\in [0, \tau(t_{\#})]$. Then there exists a constant $C$ independent of $\tau(t_{\#})$ such that  in the same time interval:

\textsc{(i)} The parameters $a$, $b$, and $\beta_{k}$, $(k=0,\dots,4)$ satisfy the estimates
\begin{align*}
A(\tau)+B(\tau) &\leq C, \\ 
|\beta_0(t)| &\leq   C\beta^{\frac{13}{5}},\ \nonumber\\
|\beta_1(t)|+|\beta_2(t)| &\leq   C\beta^{\frac{18}{5}}, \\ 
|\beta_3(t)|+|\beta_4(t)| & \leq   C\beta^{\frac{13}{5}}.\nonumber
\end{align*}

\textsc{(ii)} The function $\phi$ satisfies improved estimates implied by the bound
\begin{equation*} 
|M(\tau)|\le C.
\end{equation*}

\textsc{(iii)} In the region $\beta y^2\leq 20$, one can estimate that
\begin{align}
|v_{\pm}|+|\partial_{y}v_{\pm}|\leq C \beta^{\frac{21}{10}}\label{eq:vpmEst}.
\end{align}
\end{theorem}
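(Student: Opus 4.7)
The three conclusions will be proved in tandem via a bootstrap on the parameters $(a,b,\beta_0,\dots,\beta_4)$ coupled to a parabolic equation for the remainder $\phi$. Substituting the decomposition \eqref{eqn:split2} into \eqref{MCF-v} and subtracting the profile equation gives a schematic evolution
\[
\dt\phi+\mL\phi = \dot a\,\chi_a+\dot b\,\chi_b+\sum_{k=0}^{4}\dot\beta_k\,\chi_k + \mathcal N(\phi,V_{a,b},\{\beta_k\}),
\]
where $\chi_a,\chi_b,\chi_k$ are the partial derivatives of the profile with respect to each of the seven parameters --- together spanning the weakly unstable eigenspace of $\mL$ identified in Section~\ref{Strategy} --- and $\mathcal N$ collects the remaining nonlinearities and commutator terms. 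The orthogonality in [Cd] places $\phi$ in the spectral complement, on which $\mL$ has gap at least $a$.

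To prove (i), I would differentiate each Gaussian-weighted orthogonality condition of [Cd] in $\tau$ and substitute the equation above. Taking the $L^2(e^{-ay^2/2})$ inner product with each of the seven eigenfunctions produces a closed ODE system for the parameters. The $(a,b)$ projections yield a Riccati relation $\dot b\sim -b^2$ with cubic corrections, giving the bounds $|b-\beta|\ls\beta^{3/2}$ and $|a-\tfrac12+\tfrac{b}{4}|\ls\beta^{2}$ comprising $A(\tau)+B(\tau)\leq C$. The $\beta_k$ equations, obtained from projections onto $y,\cos\theta,\sin\theta,y\cos\theta,y\sin\theta$, have vanishing principal part because each of these eigenfunctions is Gaussian-orthogonal to the radial profile $V_{a,b}$; their right-hand sides are driven only by $\mathcal N$ and by $\phi$, yielding the sharper rates $\beta^{13/5}$, $\beta^{18/5}$, $\beta^{13/5}$ claimed. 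Integrating backward from the terminal conditions $\beta_k(t_\#)=0$ suppresses any homogeneous modes.

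To prove (ii), I would run a weighted energy argument for $\phi$ in $L^2_\mu$ and sharpen to pointwise bounds via Sobolev embedding, as in the first bootstrap machine. The spectral gap on the complement subspace combines with the nonlinear bounds from \eqref{eq:yDe}--\eqref{eq:thetaDe} and with (i) (which controls the source terms $\dot\alpha_k\chi_k$) to give a Gr\"onwall-type inequality closing the majorants $M_{m,n}(\tau)$ uniformly in $\tau(t_\#)$.

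The main obstacle is (iii), which is the essential new ingredient replacing the discrete symmetries of \cite{GKS11}. Projecting \eqref{eqn:split2} onto $e^{\pm i\theta}$ gives
\[
v_\pm(y,\tau) = \tfrac12(\beta_1\pm i\beta_2) + \tfrac{y}{2}(\beta_3\pm i\beta_4) + \phi_\pm(y,\tau),
\]
so in the inner region $\{\beta y^2\leq 20\}$ the parameter contribution is bounded by $C\beta^{21/10}$ directly from (i). It remains to prove $|\phi_\pm|+|\dy\phi_\pm|\ls\beta^{21/10}$ there. For this, I would project the $\phi$ equation onto $e^{\pm i\theta}$, obtaining a scalar parabolic equation whose linearization on $L^2(\bR,e^{-ay^2/2}\Dy)$ reduces to $\mH+a$. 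The orthogonality of $\phi$ to $\{\cos\theta,\sin\theta,y\cos\theta,y\sin\theta\}$ is precisely the Gaussian orthogonality of $\phi_\pm$ to $\{1,y\}$, which places $\phi_\pm$ in the stable subspace of $\mH+a$ with spectral gap $a$. A weighted energy estimate plus Sobolev embedding, now for the reduced one-dimensional problem and driven by the sharp sources from (i) and (ii), yields the pointwise bound on $\phi_\pm$; differentiating in $y$ and repeating controls $\dy\phi_\pm$. This closes the loop with the first machine, since (iii) is exactly the new input needed to verify condition [C2i].
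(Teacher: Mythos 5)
Your skeleton matches the paper's architecture: the parameter ODEs come from pairing the $\xi$-equation with the seven weakly unstable eigenfunctions and exploiting the Gaussian orthogonality of [Cd], the unstable modes $\beta_0,\beta_1,\beta_2$ (whose linearized coefficients $\Omega_1(a,b)\approx\frac12$ and $a\approx\frac12$ are \emph{positive}, not vanishing as you suggest) are tamed by integrating backward from $\beta_k(t_\#)=0$, and part (iii) rests on projecting onto $e^{\pm i\theta}$ and using that $\xi_\pm\perp\{e^{-ay^2/4},ye^{-ay^2/4}\}$. However, your mechanism for controlling $\phi$ and $\phi_\pm$ --- ``weighted energy argument in $L^2_\mu$ plus Sobolev embedding, as in the first bootstrap machine'' --- does not work here, and this is the heart of the matter. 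The spectral gap you invoke lives in $L^2(\bR,e^{-ay^2/2}\Dy)$; the orthogonality conditions are Gaussian-weighted, so they do not remove the unstable modes of $\mL$ viewed on $L^2_\mu$, where a direct energy estimate produces the growing term $+\tfrac{3a}{2}\|\phi\|_\mu^2$. Conversely, a Gaussian-weighted $L^2$ estimate followed by Sobolev embedding yields pointwise control degrading like $e^{ay^2/4}$, which cannot produce the \emph{global} polynomially weighted $L^\infty$ bounds $M_{m,n}(\tau)\ls1$ (nor the sharp rates $\beta^{\frac{m+n}{2}+\frac1{10}}$, which the cruder $L^2_\mu$/Sobolev route of the first machine already fails to reach). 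The paper instead freezes the time dependence of the linear operator by a second gauge change $\xi\mapsto\eta$, applies Duhamel's principle, and proves weighted-$L^\infty$ propagator estimates for $e^{-\sigma L_0(\alpha)}$ composed with the projections $\bP_2,\bP_3,\bP_7$ via explicit Mehler-kernel computations with repeated integration by parts (Lemmas~\ref{PRO:propagator} and \ref{Nested}); the polynomial weight $\lan z\ran^{-k}$ is what converts the spectral information into decay. Some version of this Bricmont--Kupiainen-type argument is indispensable; an energy method cannot substitute for it.

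A second, structural gap: your parts (i) and (iii) are circularly dependent, and you present them as sequential. The bound $|\beta_3|+|\beta_4|\ls\beta^{13/5}$ that you use to absorb the term $\tfrac{y}{2}(\beta_3\pm i\beta_4)$ in the inner region (where $|y|\ls\beta^{-1/2}$) is itself only available \emph{after} one proves the improved estimate $\|\lan y\ran^{-5}e^{ay^2/4}\dz\xi\|_{L^\infty}\ls\beta^{21/10}$ (the bound $M_4\ls1$ of Lemma~\ref{LemmaB}); without it one only gets $|\beta_3|+|\beta_4|\ls\beta^{11/5}$, hence $|\beta_ky|\ls\beta^{8/5}$, well short of $\beta^{21/10}$. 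The paper breaks the circle by first deriving the conditional bounds $|\beta_3|+|\beta_4|\ls(1+M_4)\beta^{13/5}$, then closing $M_4\ls1$ by a separate propagator argument for $\dz\xi$ (which enjoys orthogonality to all seven unstable directions), and only then running the $\xi_\pm$ estimate. Your proposal needs an analogous ordering, and in particular needs the intermediate estimate on $\dz\xi$ that it never mentions.
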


The theorem will be reformulated in Section~\ref{SlowlyDecompose} below. In that
section, we provide a heuristic outline of the ideas used in its proof, with an emphasis
on those that differ from what we used in \cite{GKS11}. The technical details of the
proof appear in the appendices.
\medskip

For the reader's convenience, we reformulate some implications of the estimates
proved in Theorem~\ref{THM:aprior} and Lemma~\ref{LM:traj} (below) in forms that
are most useful for our subsequent applications.

\begin{corollary}	\label{thm:estimateT0}
Suppose that the assumptions of Section~\ref{SBMI} hold in a time interval
$0\leq\tau\leq\tau(t_{\#})$. Then for that same time interval, the parameters
$a$ and $b$ satisfy
\begin{align*}
a(t)&=\frac12-\frac{b(t)}{4}+\mathcal O\big(b^2(t)\big),\\
b(t)&=\big(1+o(1)\big)\frac{1}{b_0^{-1}+\tau(t)},
\end{align*}
while the ``small component'' $\phi$ of the decomposition~\eqref{eqn:split2}
satisfies
\[
\|\langle y \rangle^{-3}\phi\|_{L^\infty}
	+\|\langle y \rangle^{-2}\dy\phi\|_{L^\infty}
	+\|v^{-2}\dz^2 v\|_{L^\infty}\ls b^{\frac85}.
\]

\end{corollary}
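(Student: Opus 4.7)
The plan is to simply unwind the definitions of the majorizing functions $A$, $B$, and $M_{m,n}$ introduced in equations \eqref{eq:majorA}--\eqref{eq:majorB} and \eqref{eq:majorMmn}, and to translate the bounds $A(\tau)+B(\tau)\leq C$ and $|M(\tau)|\leq C$ provided by Theorem~\ref{THM:aprior}(i)--(ii) into the pointwise estimates claimed here, using also the relation $\beta(\tau)=(b_0^{-1}+\tau)^{-1}$ (obtained from the choice $\kappa_0\equiv b_0^{-1}$ in Section~\ref{Notation}).

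First I would establish the relationship between the free parameter $b(t)$ and the reference function $\beta(\tau)$. The bound $B(\tau)\leq C$ from Theorem~\ref{THM:aprior}(i), read against definition \eqref{eq:majorB}, is exactly $|b(t(\tau))-\beta(\tau)|\leq C\beta^{3/2}(\tau)$. Hence $b/\beta=1+\mathcal O(\beta^{1/2})=1+o(1)$ as $\tau\nearrow\infty$, which together with $\beta(\tau)=(b_0^{-1}+\tau)^{-1}$ gives the asymptotic $b(t)=(1+o(1))(b_0^{-1}+\tau(t))^{-1}$. In particular $b\es\beta$, so the estimates on $\beta$ and on $b$ are interchangeable up to a universal constant.

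Next, the bound $A(\tau)\leq C$ from Theorem~\ref{THM:aprior}(i), read against definition \eqref{eq:majorA}, is $|a(t)-\tfrac12+\tfrac14 b(t)|\leq C\beta^{2}\leq C'b^{2}$, which is precisely the first claim $a(t)=\tfrac12-\tfrac{b(t)}{4}+\mathcal O(b^{2}(t))$. For the estimates on $\phi$, I invoke Theorem~\ref{THM:aprior}(ii), namely $|M(\tau)|\leq C$. Reading this for $(m,n)=(3,0)$ in \eqref{eq:majorMmn} gives
\[
    \|\lan y\ran^{-3}\phi(\cdot,\cdot,\tau)\|_{L^\infty}
    \leq C\,\beta^{\,3/2+1/10}(\tau)=C\beta^{\frac{8}{5}}(\tau),
\]
and reading it for $(m,n)=(2,1)$ gives $\|\lan y\ran^{-2}\dy\phi\|_{L^\infty}\leq C\beta^{8/5}$. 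Substituting $\beta\es b$ converts these into the claimed $\ls b^{8/5}$ estimates.

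Finally, for the term $\|v^{-2}\dz^2 v\|_{L^\infty}$, I do not need the second bootstrap at all: the output of the first bootstrap machine recorded in Section~\ref{FirstOutput} gives $v^{-2}|\dz^2 v|\ls\beta^{33/20}$ everywhere, and since $33/20>8/5$ and $\beta<1$, we have $\beta^{33/20}\leq\beta^{8/5}\es b^{8/5}$. No step here involves any real obstacle; the only mildly nontrivial point is confirming that the exponents in the $M_{m,n}$ definitions produce exactly $\beta^{8/5}$ for both the $(3,0)$- and $(2,1)$-majorants (that is, that the shift $+\tfrac1{10}$ in the exponent of $\beta$ lines up with the anisotropic weights in the norm $\|\cdot\|_{m,n}$), and that $\beta$ and $b$ may be freely interchanged — both of which have already been checked above.
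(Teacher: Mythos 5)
Your proposal is correct and is exactly what the paper intends: the corollary is stated as a reformulation of the bounds $A+B\ls1$ and $|M|\ls1$ from Theorem~\ref{THM:aprior} (equivalently Lemma~\ref{LM:traj}) together with the first-bootstrap output $v^{-2}|\dz^2 v|\ls\beta^{33/20}$, and your exponent bookkeeping ($\tfrac{m+n}{2}+\tfrac1{10}=\tfrac85$ for $(3,0)$ and $(2,1)$, and $b\es\beta$ via $B\ls1$) checks out.
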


Another consequence of the estimates in Theorem~\ref{THM:aprior} and Lemma~\ref{LM:traj}
gives control on the sequence of optimal coordinate systems we construct.

\begin{corollary}\label{prop:differenceCoor}
Let the assumptions of Section~\ref{SBMI} hold for times $0\leq t\leq t_{\#}$, and
suppose that $(x_0,x_1,x_2)_n$ and $(x_0,x_1,x_2)_{n+1}$ are 
optimal coordinates at times $t_n<t_{n+1}$ respectively, where
$[t_n,t_{n+1}]\subseteq[0,t_{\#}]$.
Then there exist $\Phi\in\bR^3$ and $\Psi\in\mathrm O(3)$ such that
$(x_0,x_1,x_2)_{n+1}=\Phi+(x_0,x_1,x_2)_n\Psi$, where
\[
 |\Phi|+|\Psi-I|\ls b_\opt^2(t_n).
\]
Furthermore, for $t\in[t_n,t_{n+1}]$, the parameters $b$ and $\lambda$ satisfy the estimates
\begin{align*}
b(t)&=b_\opt(t_n)\big\{1+\cO(b_\opt^2(t_n))\big\},\\
\lambda(t)&=\lambda_\opt(t_n)\big\{1+\cO(b_\opt^2(t_n))\big\}.
\end{align*}
\end{corollary}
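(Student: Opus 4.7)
My plan is to identify the rigid motion relating the cylindrical coordinate systems $(x,r,\theta)_n$ and $(x,r,\theta)_{n+1}$ with the Euclidean motion needed to absorb the translate and tilt parameters $\beta_0,\dots,\beta_4$ that the solution accumulates over $[t_n,t_{n+1}]$ when viewed in the coordinates $(x,r,\theta)_n$. Condition~[Cd] supplies the decomposition~\eqref{eqn:split2} throughout $[t_n,t_{n+1}]$, with initial conditions $\beta_k(t_n)=0$ for $k=0,\dots,4$ and $a(t_n)=\tfrac12-\tfrac14 b_\opt(t_n)$. Theorem~\ref{THM:aprior}(i) then yields
\[
|\beta_0(t)|+|\beta_3(t)|+|\beta_4(t)|\ls\beta(\tau(t))^{13/5},\qquad
|\beta_1(t)|+|\beta_2(t)|\ls\beta(\tau(t))^{18/5},
\]
throughout the interval, bounding the quantities that the new coordinate system must absorb.

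Next, I will construct $(x,r,\theta)_{n+1}$ according to Definition~\ref{OptimalCoordinates} via the same implicit function theorem argument from \cite{GS09} that underlies Condition~[Cd]. Because any two cylindrical coordinate systems in $\bR^3$ are related by a Euclidean motion, this simultaneously produces $\Phi\in\bR^3$, $\Psi\in\mathrm{O}(3)$ with $(x,r,\theta)_{n+1}=\Phi+(x,r,\theta)_n\Psi$, along with new parameters $\lambda_\opt(t_{n+1})$, $b_\opt(t_{n+1})$, and $a(t_{n+1})=\tfrac12-\tfrac14 b_\opt(t_{n+1})$ for which all five $\beta$-modes vanish in the new coordinates. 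Uniform nondegeneracy of the linearization is the crucial point: it holds because the five infinitesimal generators (three translations, two axis tilts) act in a diagonally-dominant fashion on the eigenfunctions $\{y,\cos\theta,\sin\theta,y\cos\theta,y\sin\theta\}$, mirroring the original optimal-coordinate construction.

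To bound $|\Phi|+|\Psi-I|$, I will read off the linearized correspondence between the motion parameters and the absorbed $\beta$'s. An axial shift $\Phi_0$ induces $y\mapsto y-\Phi_0/\lambda$ and, via $V_{a,b}''(0)\sim b$, contributes a $y$-linear term of order $b\Phi_0/\lambda$ that must cancel $\beta_0$, forcing $|\Phi_0|\sim\lambda|\beta_0|/b$; transverse shifts $(\Phi_1,\Phi_2)$ perturb the polar radius by $-(\Phi_1\cos\theta+\Phi_2\sin\theta)$ at leading order, forcing $|\Phi_j|\sim\lambda|\beta_j|$ for $j=1,2$; and small rotations in the $(x_0,x_j)$-planes tilt the axis and introduce $y\cos\theta$ or $y\sin\theta$ terms whose coefficients equal the tilt angles, forcing $|\Psi-I|\sim|\beta_3|+|\beta_4|$. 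Combined with the bounds above and the a priori estimate $\lambda\ls\sqrt{T-t}$ from Corollary~\ref{thm:estimateT0}, this yields $|\Phi|\ls\lambda\beta^{8/5}$ and $|\Psi-I|\ls\beta^{13/5}$, both comfortably $o(b_\opt^2(t_n))$.

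Finally, for the parameter estimates on $b(t)$ and $\lambda(t)$, I will invoke Theorem~\ref{THM:aprior}(i) again: the bound on the majorizing functional $B(T)$ in \eqref{eq:majorB} gives $|b(t)-\beta(\tau(t))|\ls\beta^{3/2}$, which combined with $b(t_n)=b_\opt(t_n)$ and the slow variation of $\beta$ over $[\tau(t_n),\tau(t_{n+1})]$ yields $b(t)=b_\opt(t_n)(1+\mathcal O(b_\opt^2(t_n)))$. The estimate for $\lambda(t)$ then follows by integrating $\lambda\partial_t\lambda=-a$ using the control on $a-\tfrac12+\tfrac14 b$ supplied by Theorem~\ref{THM:aprior}(i). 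I expect the principal technical obstacle to be the construction step: verifying uniform nondegeneracy of the optimality map at the relevant point and controlling the nonlinear cross-coupling between translations, tilts, and rescalings at sub-$b_\opt^2(t_n)$ order --- since in \cite{GKS11}, discrete symmetries fixed the axis and center a priori, rendering exactly these five coordinate degrees of freedom inert.
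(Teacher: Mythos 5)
Your overall strategy --- identify the Euclidean motion between the two coordinate systems with the translate/tilt modes $\beta_0,\dots,\beta_4$, bound those via Theorem~\ref{THM:aprior}(i), and read off the $b$ and $\lambda$ estimates from $A$, $B$, and the parameter \textsc{ode} --- is exactly what the paper intends (the paper offers no written proof; the corollary is presented as a direct consequence of Theorem~\ref{THM:aprior} and Lemma~\ref{LM:traj}). Your dictionary between motion parameters and modes (axial shift $\mapsto$ $y$-mode with the $1/b$ amplification, transverse shifts $\mapsto$ $\cos\theta,\sin\theta$, tilts $\mapsto$ $y\cos\theta,y\sin\theta$) and the resulting bounds $|\Phi|\ls\lambda\beta^{8/5}$, $|\Psi-I|\ls\beta^{13/5}\ll b_\opt^2$ are correct.

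There is, however, one genuine error in your setup: you anchor the decomposition at the \emph{earlier} time, asserting ``initial conditions $\beta_k(t_n)=0$.'' Condition~[Cd] and the boundary conditions \eqref{eq:boundary} impose $\beta_k(t_\#)=0$ at the \emph{later} endpoint, and this direction is not cosmetic. The modes $\beta_1,\beta_2$ satisfy $\partial_\tau\beta_j=a\beta_j+(\mathrm{Small})_j$ with $a\approx\frac12>0$; the bounds $|\beta_1|+|\beta_2|\ls\beta^{18/5}$ in Lemma~\ref{LM:traj} are obtained precisely by integrating \emph{backward} from the terminal vanishing condition, so that the propagator $e^{-\int_\kappa^{\tau_\#}a}$ is contracting. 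If instead you impose $\beta_k(t_n)=0$ and integrate forward, as your framing requires, the unstable factor $e^{a(\tau-\tau_n)}$ destroys the estimate unless $\tau_{n+1}-\tau_n$ is a priori bounded, and the quoted conclusions of Theorem~\ref{THM:aprior}(i) are simply not available in that gauge. The repair is to run your argument in the paper's orientation: take the decomposition in the coordinates optimal at $t_{n+1}$, use Lemma~\ref{LM:traj} to conclude $|\beta_k(t_n)|\ls\beta^{13/5}(\tau_n)$, and then compare, \emph{at the single time} $t_n$, this nearly-orthogonal configuration with the exactly-orthogonal one in $(x,r,\theta)_n$; your linearized correspondence between the $\beta$'s and $(\Phi,\Psi)$ then applies verbatim and yields the stated bound.
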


\section{Improved estimates for the decomposition}\label{SlowlyDecompose}
We begin by putting the equation into a better form. Instead of studying the evolution equation
for $\phi$, we fix a gauge by means of a weight factor $e^{-\frac{a}4 y^2}$ chosen so that the
linearization, to be obtained below, becomes a self-adjoint operator.

By Condition~[Cd] in Section~\ref{sec:asymp}, there exists a ($t$-scale) time
$0<t_{\#} \le \infty$ such that the gauge-fixed quantity
$$w(y,\theta,\tau) := v(y,\theta,\tau) e^{-\frac{a}4 y^2}$$ can be decomposed as
\begin{equation}
w = w_{ab}(y) +e^{-\frac{a}4 y^2}
[\beta_0 y+\beta_1 \cos\theta+\beta_2 \sin\theta+\beta_3 y \cos\theta +\beta_4 y \sin\theta ]
+\xi(y,\theta,\tau).	 \label{eqn:split3}
\end{equation} 
Here $w_{ab}:=V_{a,b}e^{-\frac{a}4 y^2}$, and
\begin{align}	\label{eq:orthoxi}
e^{-\frac{a}4 y^2}\xi\perp&\big\{1,\;y,\; y^2,\; \cos\theta,\; \sin\theta,\; y\cos\theta,\; y\sin\theta\big\},
\end{align}
where the orthogonality is with respect to the $L^2(\bS^1\times\bR)$ inner product,\footnote{In
the remainder of this paper, the inner product we use is $\langle\cdot,\cdot\rangle$. As
defined in Section~\ref{Notation}, this is the standard (unweighted)  inner product for
$L^2\equiv L^2(\mathbb{S}^1\times\mathbb{R})$.} and
\begin{align}
\beta_{k}(t_{\#}) =&0,\quad(k=0,1,2,3,4),\label{eq:boundary}\\
a(t_{\#})=&\frac{1}{2}-\frac{1}{4}b(t_{\#})\nonumber.
\end{align}
The parameters $a$, $b$ and $\beta_{k},\ k=0,1,\cdots,4$, are $C^1$ functions of $t$.\footnote{To
simplify notation, we mildly abuse notation by writing $a(\tau)$, $b(\tau)$, and $\beta_{k}(\tau)$ for $a(t(\tau))$,
$b(t(\tau))$, and $\beta_{k}(t(\tau))$, respectively. This will not cause confusion, as the original functions
$a(t)$, $b(t)$, and $\beta_{k}(t)$ are not needed in what follows.}
\medskip

We now derive the \textsc{pde} satisfied by the  ``small component'' $\xi(y,\theta,\tau)$. Below,
we use this to derive the \textsc{ode} satisfied by the parameters $a(\tau)$, $b(\tau)$, and  $\beta_{k}(\tau)$.
Using equations~\eqref{eqn:split3} and \eqref{MCF-v}, one finds that $\xi$ evolves by
\begin{equation}\label{eq:xi}
\dt \xi = - L(a,b) \xi + F(a,b,\beta) + N_1 (a,b,\xi) +N_2 (a,b,\xi) + N_3 (a,b,\xi),
\end{equation}
where $L(a,b)$ is the linear operator
$$L(a,b) := - \dy^2 + \frac{a^2 + \partial_\tau a }4 y^2 -\frac{3a}2- \frac{2-2a}{2+b y^2} - \frac12 \dz^2.$$
The remaining terms on the \textsc{rhs} of the evolution equation~\eqref{eq:xi} are as follows.

The quantity $F$ has two parts, $F(a,b,\beta) :=F_1(a,b)+F_{2}(a,b,\beta)$, which are
\begin{align*}
F_1(a,b) & := e^{-\frac{ay^2}{4}} \left[
\frac{1}{\sqrt{2-2a}}\frac{2b}{(2+by^2)^{\frac{3}{2}}}-\frac{aby^2}{\sqrt{2-2a}\sqrt{2+by^2}}+a\sqrt{\frac{2+by^2}{2-2a}} \right.\\
&\qquad \qquad \left. 
-\sqrt{\frac{2-2a}{2+by^2}}-\frac{\frac{1}{2}b_{\tau}y^2}{\sqrt{2-2a}\sqrt{2+by^2}}-\frac{\sqrt{2+by^2}}{(2-2a)^{\frac{3}{2}}}a_{\tau}\right].
\end{align*}
and
\begin{align*}
F_{2}(a,b,\beta)& := e^{-\frac{a y^2}4}\left[\partial_{y}^2+V_{a,b}^{-2}\partial_{\theta}^2-ay\partial_{y}+a+V_{a,b}^{-2}-\partial_{\tau}\right]\\
&\qquad 
\left[\beta_0 y+\beta_1 \cos\theta+\beta_2 \sin\theta+\beta_3 y \cos\theta +\beta_4 y \sin\theta \right].
\end{align*}

The first nonlinear term $N_1$ is 
\begin{align}
N_1 (a,b,\xi) := & - \frac1v   \frac{2-2a}{2+by^2} e^{\frac{ay^2}4} \tilde\xi^2 ,
\label{eq:defN1}
\end{align}
where
 $$\tilde\xi:= e^{-\frac{ay^2}4} [e^{\frac{ay^2}4}\xi+\beta_0 y+\beta_1 \cos\theta+\beta_2 \sin\theta+\beta_3 \cos\theta y+\beta_4 \sin\theta y].$$

The second nonlinear term $N_2$ is
\begin{align*}
N_2 (a,b,\xi) := & - e^{-\frac{ay^2}4} \frac{p^2}{1+p^2+q^2} \dy^2 v . 
\end{align*}

The final nonlinear term $N_3$ is
\begin{align}\label{eq:difN3}
N_3 (a,b,\xi) :=  N_{3,1}(a,b,\xi)+N_{3,2}(a,b,\xi),
\end{align}
where $$N_{3,1}:=\left[V_{a,b}^{-2}-\frac{1}{2}\right]\partial_{\theta}^2\xi,$$
and
\begin{multline*}
 N_{3,2}:=  \left[v^{-2} \frac{1+p^2}{1+q^2+p^2} - V_{a,b}^{-2} \right]
\dz^2 v\ e^{-\frac{ay^2}4}\\
- e^{-\frac{ay^2}4}  v^{-1} \frac{2pq}{1+p^2+q^2}\dz  \dy v
+ e^{-\frac{ay^2}4}  v^{-2}  \frac{q}{1+p^2+q^2} \dz v.
\end{multline*}

\subsection{The finite-dimensional part of the decomposition}
We next derive and estimate \textsc{ode} for the parameters $a$, $b$ and $\beta_k$, $(k=0,\cdots, 4)$ by using
the evolution equation~\eqref{eq:xi} for $\xi$ and the orthogonality conditions~\eqref{eq:orthoxi}. Before providing
the details of the technical proof, we illustrate the key ideas involved by heuristically outlining how one derives and
estimates an \textsc{ode} for $\beta_0$.

Taking the $L^2(\bS^1\times\bR)$ inner product of~\eqref{eq:xi} with the function $y e^{-\frac{a}{4}y^2}$ yields
$$
\left\langle y e^{-\frac{a}{4}y^2},\ \partial_{\tau}\xi\right\rangle
=- \left\langle  y e^{-\frac{a}{4}y^2},\ L(a,b) \xi \right\rangle
+ \left\langle y e^{-\frac{a}{4}y^2},\ F(a,b,\beta)\right\rangle+\cdots.
$$ 
The fact that $\xi\perp y e^{-\frac{a}{4}y^2}$ in ~\eqref{eq:orthoxi} implies that
\begin{align*}
\left|\left\langle y e^{-\frac{a}{4}y^2},\ \partial_{\tau}\xi \right\rangle \right|
= \left|\partial_{\tau}\left\langle y e^{-\frac{a}{4}y^2},\ \xi\right\rangle
+\frac{a_{\tau}}{4} \left\langle y^3e^{-\frac{a}{4}y^2},\ \xi \right\rangle \right|\ls |a_{\tau}|\,\|\langle y\rangle^{-3} e^{\frac{a}{4}y^2}\xi\|_{L^\infty}.
\end{align*}

Noting that $y e^{-\frac{a}{4}y^2}$ is an eigenfunction of the self-adjoint operator
\[
-\partial_{y}^2-\frac{1}{2}\partial_{\theta}^2+\frac{a^2}{4} y^2-2a -\frac{1}{2}
\]
that constitutes the ``main part" of the linear operator $L(a,b)$ appearing in ~\eqref{eq:xi}, we obtain
$$\left|\left\langle y e^{-\frac{a}{4}y^2},\ L(a,b)\xi\right\rangle\right|
\ls\big(|a_{\tau}|+b\big)\,\|\langle y\rangle^{-3} e^{\frac{a}{4}y^2}\xi\|_{L^\infty}.$$
Because $y e^{-\frac{a}{4}y^2}$ is an odd function of $y$ and is independent of $\theta$, we then get
$$
\left\langle y e^{-\frac{a}{4}y^2},\ F(a,b,\beta)\right\rangle
= 2\pi \left[\Omega_1(a,b)\beta_0-\partial_{\tau}\beta_0\right]
\int_{-\infty}^{\infty} y^2 e^{-\frac{a}{2}y^2}\,\mathrm{d}y,
$$
where $\Omega_1$ is the positive scalar function
\begin{align}\label{eq:Omega1}
\Omega_{1}(a,b):=&\frac{\int_{-\infty}^{\infty}y^2 V_{a,b}^{-2} e^{-\frac{a}{2}y^2}\,\mathrm{d}y}
{ \int_{-\infty}^{\infty}y^2  e^{-\frac{a}{2}y^2}\,\mathrm{d}y}\approx \frac{1}{2}.
\end{align}
Here we used the fact that $V_{a,b}=\sqrt{\frac{2+by^2}{2-2a}}\approx \frac{1}{2}$, which is implied by
$a\approx \frac{1}{2}$ and the fact that $b$ is positive and small.
Collecting the estimates above yields the desired equation for $\beta_0,$ which has the form
$$\partial_{\tau}\beta_0-\Omega_1(a,b)\beta_0=\cO\big((|a_\tau|+|b|) \|\langle y\rangle^{-3}
e^{\frac{a}{4}y^2}\xi\|_{L^\infty}\big)+\cdots.$$

Finally, we sketch how one derives an estimate for $\beta_0$ based on this \textsc{ode}. Using the
boundary condition $\beta_0(\tau(t_{\#}))=0$ from \eqref{eq:boundary}, we can rewrite the equation
for $\beta_0$ at any $\tau\leq \tau(t_{\#})$ as
\begin{equation*}
\beta(\tau)=\int_{\tau}^{\tau(t_{\#})}\Big\{e^{-\int^{\tau(t_{\#})}_s \Omega_1(a, b)(s_1)\ ds_1}\;\cO
\big((|a_\tau|+|b|) \|\langle y\rangle^{-3} e^{\frac{a}{4}y^2}\xi\|_{L^\infty}\big)(s)+\cdots\Big\}\,\mathrm{d}s.
\end{equation*}
Using the fact that $\Omega_1\approx \frac{1}{2}$ and the smallness of $|a_\tau|$, $b$, and
$\|\langle y\rangle^{-3} e^{\frac{a}{4}y^2}\xi\|_{L^\infty}$, this implies the desired estimate.
\medskip

Using the method outlined above, we  obtain the following estimates.
\begin{lemma}\label{LM:traj}
For all times that the assumptions of Section~\ref{SBMI} hold, one has
\begin{align}
-\frac{2}{1-a}\partial_{\tau}a+2b+4(2a-1)  & = \cO(\beta^{\frac{5}{2}}),\label{eq:a}\\
\partial_{\tau}b+b^2  & = \cO(\beta^{\frac{5}{2}}),\label{eq:b}\\
\partial_{\tau}\beta_0  & = \Omega_{1}(a,b) \beta_0+\cO(\beta^{\frac{5}{2}}),\label{eq:26in}\\ \nonumber\\
\partial_{\tau}\beta_1  & = a \beta_1+(\mathrm{Small})_1,\label{eq:27in}\\
\partial_{\tau}\beta_2  & = a \beta_2+(\mathrm{Small})_2,\nonumber\\
\partial_{\tau}\beta_3  & = (\mathrm{Small})_3, \nonumber \\  
\partial_{\tau}\beta_4  & = (\mathrm{Small})_4, \nonumber
\end{align}
where the terms $(\mathrm{Small})_{\ell}$, $(\ell=1,\dots,4)$, satisfy
$$|(\mathrm{Small})_{\ell}|\ls\min\Big\{\beta^{\frac{16}{5}}, \beta^{\frac{18}{5}}
+\beta^{\frac{31}{20}}\big\|\langle y\rangle^{-5} \partial_{\theta}\xi e^{\frac{ay^2}{4}}\big\|_{L^\infty}\Big\}.$$

Furthermore, one has
\begin{align}	\label{eq:abab}
\Big|2a-1+\frac{1}{2}b \Big| \ls \beta^2\qquad\text{and}\qquad
|b-\beta|\ls  \beta^{\frac{3}{2}},
\end{align}
(i.e.~$A+B\ls1$) and
\begin{align}
|\beta_0| & \ls  \beta^{\frac{5}{2}}\label{eq:estbeta0},\\
|\beta_1|+\ |\beta_2|& \ls \min\Big\{\beta^{\frac{16}{5}},\ (1+M_4) \beta^{\frac{18}{5}}\Big\},\label{eq:beta01234}\\
|\beta_3|+\ |\beta_4|& \ls  \min\Big\{\beta^{\frac{11}{5}},\ (1+M_4) \beta^{\frac{13}{5}}\Big\}.\nonumber
\end{align}
\end{lemma}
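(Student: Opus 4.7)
The plan is to derive the seven ODEs~\eqref{eq:a}--\eqref{eq:27in} by projecting the gauge-fixed evolution equation~\eqref{eq:xi} against seven carefully chosen test functions $\varphi_j = e^{-ay^2/4}\psi_j$, where $\psi_j$ runs over the seven-dimensional subspace $\{1,\, y,\, y^2,\, \cos\theta,\, \sin\theta,\, y\cos\theta,\, y\sin\theta\}$ that appears in~\eqref{eq:orthoxi}, and then to integrate the resulting scalar ODEs backward from the terminal conditions~\eqref{eq:boundary}. The orthogonality $\langle\varphi_j,\xi\rangle=0$ lets us rewrite $\langle\varphi_j,\partial_\tau\xi\rangle$ as $-\langle\partial_\tau\varphi_j,\xi\rangle$, which equals $\cO\bigl(|a_\tau|\,\|\langle y\rangle^{-3}e^{ay^2/4}\xi\|_{L^\infty}\bigr)$ and is therefore of higher order. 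This converts the PDE~\eqref{eq:xi} into a closed system of seven scalar ODEs, one for each of $a,\, b,\, \beta_0,\ldots,\beta_4$.

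The structure of each ODE is dictated by three observations. First, each $\varphi_j$ is, up to $\cO(b+|a_\tau|)$ errors, an eigenfunction of the self-adjoint ``main part'' of $L(a,b)$, so $\langle\varphi_j,L(a,b)\xi\rangle = \cO\bigl((b+|a_\tau|)\,\|\langle y\rangle^{-3}e^{ay^2/4}\xi\|_{L^\infty}\bigr)$, which is absorbed into the $\cO(\beta^{5/2})$ remainder. Second, the inhomogeneity $F_1(a,b)$ is $\theta$-independent and $y$-even, so only the two projections against $e^{-ay^2/4}$ and $(1-ay^2)e^{-ay^2/4}$ see it; those produce the coupled pair~\eqref{eq:a}--\eqref{eq:b} for $(a,b)$. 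Third, $F_2(a,b,\beta)$ is linear in the $\beta_k$ and pairs each $\beta_k$ with the matching $\varphi_j$; the effective rates $a$ and $\Omega_1(a,b)$ in~\eqref{eq:26in}--\eqref{eq:27in} come from the $V_{a,b}^{-2}$-weighted Gaussian integrals as in~\eqref{eq:Omega1}. The nonlinear contributions $N_1,N_2,N_3$ are handled using the first-bootstrap outputs~\eqref{eq:lower}--\eqref{eq:thetaDe} together with the majorant bounds in Condition~[Cb]; each projection contributes at most $\cO(\beta^{5/2})$. The delicate piece is $N_{3,1} = (V_{a,b}^{-2}-\tfrac12)\partial_\theta^2\xi$ when paired with $y\cos\theta\,e^{-ay^2/4}$ or $y\sin\theta\,e^{-ay^2/4}$: here one must integrate by parts in $\theta$ and retain the refined weighted $L^\infty$ norm $\|\langle y\rangle^{-5}\partial_\theta\xi\,e^{ay^2/4}\|_{L^\infty}$, which is the origin of the two-form expression for $(\mathrm{Small})_\ell$.

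To pass from the ODEs to the estimates~\eqref{eq:abab}--\eqref{eq:beta01234}, we treat each class of modes separately. Equation~\eqref{eq:b} is a perturbed Riccati $b_\tau + b^2 = \cO(\beta^{5/2})$; since $\beta(\tau)=(\kappa_0+\tau)^{-1}$ solves the unperturbed equation exactly, a standard comparison argument yields $|b-\beta|\ls\beta^{3/2}$, and substitution into~\eqref{eq:a} then gives $|2a-1+b/2|\ls\beta^2$. For the strictly unstable modes $\beta_0$ (rate $\Omega_1\approx\tfrac12$) and $\beta_1,\beta_2$ (rate $a\approx\tfrac12$), solving backward from $\beta_k=0$ at the terminal time produces an integrating-factor kernel $e^{-(\text{rate})(s-\tau)}\le1$ for $s\ge\tau$, so $|\beta_k(\tau)|$ is pointwise controlled by the local source size, giving~\eqref{eq:estbeta0} and the $\beta^{16/5}$ bound in~\eqref{eq:beta01234}. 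The marginal tilt modes $\beta_3,\beta_4$ satisfy $\partial_\tau\beta_k=(\mathrm{Small})_k$ with no stabilizing eigenvalue, so only polynomial gain is available: $\int_\tau^{\tau(t_\#)}\beta^{16/5}\,ds\ls\beta^{11/5}(\tau)$, giving the first option in~\eqref{eq:beta01234}; the refined $(1+M_4)\beta^{13/5}$ form follows by feeding the improved $\xi$-estimates from Theorem~\ref{THM:aprior}(ii) back into the source.

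The main obstacle, and the ingredient that is genuinely new relative to~\cite{GKS11}, is the coupling between the $\beta_k$-bounds and the sharp inner-region estimate~\eqref{eq:vpmEst} for $v_\pm$ and $(\partial_y v)_\pm$. Via the decomposition~\eqref{eqn:split3}, the $e^{\pm i\theta}$ Fourier modes of $\xi$ are intertwined with $\beta_1,\beta_2,\beta_3,\beta_4$, while the $(\mathrm{Small})_\ell$ quantities appearing in the ODEs for $\beta_3,\beta_4$ depend on precisely the weighted $L^\infty$ norm of $\partial_\theta\xi$ that~\eqref{eq:vpmEst} controls. This loop is closed in tandem with~\eqref{eq:vpmEst} inside Theorem~\ref{THM:aprior}: a weak form of~\eqref{eq:vpmEst} implied by Condition~[Cb] is used to carry out the argument above, and the resulting sharp $\beta_k$-bounds are then substituted back to prove the improved version of~\eqref{eq:vpmEst}. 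Verifying that this simultaneous bootstrap closes—rather than tracking any single ODE—is the technically demanding step.
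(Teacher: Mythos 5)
Your outline reproduces the paper's proof almost exactly: the seven \textsc{ode} are obtained by pairing \eqref{eq:xi} with the seven weighted modes, the time-derivative and linear terms are killed (up to $\cO(|a_\tau|+b)$ errors) by the orthogonality conditions and the eigenfunction property, $F_1$ feeds only the $1$ and $y^2$ modes while $F_2$ feeds each $\beta_k$ with rate $a$ or $\Omega_1(a,b)$, and the bounds on $\beta_0,\dots,\beta_4$ come from integrating the resulting scalar \textsc{ode} backward from the terminal data \eqref{eq:boundary} with contracting kernels $e^{-(\mathrm{rate})(s-\tau)}\leq1$. The two-option form of $(\mathrm{Small})_\ell$ does indeed come from integrating by parts in $\theta$ against the $\theta$-dependent test functions, although in the paper the $\beta^{\frac{31}{20}}\|\langle y\rangle^{-5}\partial_\theta\xi\,e^{\frac{ay^2}{4}}\|_{L^\infty}$ contribution arises from the projections of $N_1$ and $N_{3,2}$, while $N_{3,1}$ is absorbed into the linear term; that is only a bookkeeping difference.

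The one step that would fail as written is your derivation of the first half of \eqref{eq:abab}: ``substitution into \eqref{eq:a}'' does not give $|2a-1+\tfrac12 b|\ls\beta^2$, because \eqref{eq:a} still contains $\partial_\tau a$ and is an \textsc{ode}, not an algebraic relation. The quantity $\tilde\Gamma_1:=a-\tfrac12+\tfrac14 b$ satisfies $\partial_\tau\tilde\Gamma_1=2\tilde\Gamma_1+\cO(\beta^2)$ (combining \eqref{eq:a} with $|\partial_\tau b|\ls\beta^2$), which is exponentially \emph{unstable} forward in time; a generic solution grows like $e^{2\tau}$ and the claimed bound is false for it. One must treat $\tilde\Gamma_1$ exactly as you treat $\beta_0$: integrate backward from the terminal condition $a(t_{\#})=\tfrac12-\tfrac14 b(t_{\#})$, which gives $\tilde\Gamma_1(\tau(t_{\#}))=0$, using the kernel $e^{2(\tau-s)}\leq1$ for $s\geq\tau$, to obtain $|\tilde\Gamma_1(\tau)|\ls\int_\tau^{\tau(t_{\#})}e^{2(\tau-s)}\beta^2(s)\,\mathrm{d}s\ls\beta^2(\tau)$. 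Since this is precisely the mechanism you already invoke for the other unstable modes, the fix is routine; but as stated the step omits the only place where the terminal normalization of $a$ from the optimal-coordinate construction is actually used.
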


The function $M_{4}$ above is defined as 
$$
M_{4}(\tau):=\max_{0\leq s\leq \tau}\beta^{-\frac{21}{10}}
\Big\|\langle y\rangle^{-5}\partial_{\theta}\xi(\tau) e^{\frac{ay^2}{4}} \Big\|_{L^\infty}.
$$
We prove the lemma in Appendix~\ref{Sec:Splitting}.

\subsection{The infinite-dimensional part of the decomposition}	\label{SmallComponent}
We next show how to control the infinite-dimensional part $\xi$. Many of the arguments here
are virtually identical to their counterparts in \cite{GKS11}. Before providing details, we
again begin with a heuristic outline of the main ideas behind the proof.

We apply Duhamel's principle to equation~\eqref{eq:xi} for $\xi$ to obtain
$$
\xi(\tau)=U(\tau,0)\xi(0)+\int_{0}^{\tau} U(\tau,\sigma)\ \cdots\,\mathrm{d}\sigma,
$$
where $U(\tau,\sigma)$ is the propagator generated by the linear operator $-L(a,b)$ from time $\sigma$ to $\tau$.
To prove the desired decay estimates for $\xi$, we exploit the decay of $U(\tau,\sigma)$ in a suitable subspace of $L^{\infty}$. To achieve this, we must overcome two difficulties. \textsc{(i)} The generator of $U(\tau,\sigma)$
namely $L(a,b)$, has a seven-dimensional nonpositive eigenspace, spanned by $e^{-\frac{a}4 y^2}$, 
$ye^{-\frac{a}4 y^2}$, $(ay^2-1)e^{-\frac{a}4 y^2}$, $(\cos\theta)e^{-\frac{a}4 y^2}$,
$(\sin\theta)e^{-\frac{a}4 y^2}$, $(y\cos\theta)e^{-\frac{a}4 y^2}$, and $(y\sin\theta)e^{-\frac{a}4 y^2}.$
This implies that $U(\tau,\sigma)$ may grow in these directions. \textsc{(ii)} The operator $L(a,b)$ is not
autonomous, so $U(\tau,\sigma)\not= e^{-tL(a,b)}$.

We overcome the first difficulty by using the fact that $\xi$ is orthogonal to the unstable eigenspace.
At least intuitively, one expects $U(\tau,\sigma)$ to decay exponentially fast in directions orthogonal
to that space. To overcome the second difficulty, we make another gauge change, reparameterizing
the function $\xi$ to obtain a new function whose evolution is dominated by an autonomous operator.
\medskip

Using these ideas, one first proves the following.

\begin{lemma}\label{Prop:Gam12}
For all times that the assumptions of Section~\ref{SBMI} hold, one has
\[
|M_{m,n}|\ls 1,\quad \text{for}\quad (m,n)\in\big\{(3,0), \ (11/10,0),\ (2,1),\ (1,1)\big\}.
\]
\end{lemma}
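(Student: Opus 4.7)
The plan is to follow the Duhamel/propagator strategy sketched immediately before the statement, adapted to the seven-dimensional unstable/neutral eigenspace identified in Section~\ref{Strategy}. Since $\phi(y,\theta,\tau)=e^{ay^2/4}\xi(y,\theta,\tau)+(\text{terms linear in }\beta_k)$ and the $\beta_k$ are already controlled by the sharp estimates \eqref{eq:estbeta0}--\eqref{eq:beta01234} from Lemma~\ref{LM:traj}, the claim reduces to bounding $\xi$ in weighted $L^\infty$ norms of the form $\|(1+y^2)^{-m/2}\dy^n(e^{ay^2/4}\xi)\|_{L^\infty}$ for the four pairs $(m,n)\in\{(3,0),(11/10,0),(2,1),(1,1)\}$ by multiples of $\beta^{(m+n)/2+1/10}$.

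The first step is to apply Duhamel's formula to the evolution equation~\eqref{eq:xi}, writing
\[
 \xi(\tau)=U(\tau,0)\xi(0)+\int_0^\tau U(\tau,\sigma)\bigl[F(a,b,\beta)+N_1+N_2+N_3\bigr](\sigma)\,\rd\sigma,
\]
where $U(\tau,\sigma)$ is the propagator generated by $-L(a,b)$. The ``main part'' of $L(a,b)$, namely the self-adjoint operator $H(a):=-\dy^2-\tfrac12\dz^2+\tfrac{a^2}{4}y^2-\tfrac{3a}{2}-\tfrac12$, has a seven-dimensional nonpositive eigenspace spanned by the Hermite-times-circular-harmonic modes listed after \eqref{eqn:split3}; the orthogonality conditions~\eqref{eq:orthoxi} guarantee that $\xi$ remains in the orthogonal complement, where $H(a)$ has a strictly positive spectral gap independent of $\tau$. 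To remove the time-dependence of $L(a,b)$, I would freeze its coefficients at $\sigma$ (or introduce a further gauge change as in the discussion preceding this lemma), replacing $U(\tau,\sigma)$ by $e^{-(\tau-\sigma)H(a(\sigma))}$ modulo explicit perturbations of size $|a_\tau|+|b|+|b_\tau|\ls\beta^{3/2}$ that follow from \eqref{eq:a}--\eqref{eq:b}; the perturbative remainders are absorbed into the forcing.

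The second step is to derive pointwise propagator bounds in each of the four weighted norms $\|\cdot\|_{m,n}$. On the orthogonal complement of the seven-dimensional kernel of $H(a)-(\text{eigenvalue})$, one obtains decay of the Mehler-type heat kernel with contraction rate $e^{-\nu(\tau-\sigma)}$ for a positive $\nu$ determined by the next eigenvalue $a\approx1/2$ of $H(a)$. Once such estimates are in hand, one controls the forcing: the inhomogeneous term $F=F_1+F_2$ is explicit in $a,a_\tau,b,b_\tau,\beta_k$ and, via Lemma~\ref{LM:traj}, is bounded by $\cO(\beta^{5/2})$ in the relevant norms; the nonlinear terms $N_2$ and $N_3$ involve products of quantities already controlled by the first bootstrap outputs~\eqref{eq:smallness}--\eqref{eq:thetaDe}; and $N_1$, being quadratic in $\tilde\xi$, contributes a factor of $\|\xi\|_{m,n}$ small enough by [Cb] to let the bootstrap close. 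The initial datum $\xi(0)$ is controlled by Assumption~[A3].

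The main obstacle will be deriving the weighted $L^\infty$ propagator estimates with the correct decay rates in the four prescribed norms $\|\cdot\|_{m,n}$. The polynomial weights $\langle y\rangle^{-m}$ are not naturally adapted to the Gaussian spectrum of $H(a)$, so one must track carefully how the projection off the seven-dimensional nonpositive eigenspace interacts with the polynomial decay at infinity --- a noticeably more delicate task here than in \cite{GKS11}, where only a two-dimensional projection was needed. Once the pointwise kernel bounds are established, the forcing exponent in each norm exceeds $(m+n)/2+1/10$, so the Duhamel integral produces exactly the desired majorization, and the four estimates assemble in a self-contained bootstrap that improves [Cb] and yields $|M(\tau)|\ls 1$.
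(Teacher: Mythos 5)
Your proposal is correct and follows essentially the same route the paper takes: Duhamel for the gauge-fixed $\xi$, projection off the seven-dimensional nonpositive eigenspace of the linearization (justified by the orthogonality conditions~\eqref{eq:orthoxi}), the $\lambda_1$-reparametrization to make the generator autonomous (the right tool here --- naive coefficient-freezing would leave a non-small $y^2$ perturbation), weighted $L^\infty$ propagator bounds, and forcing estimates from Lemma~\ref{LM:traj} and the first machine's outputs. The paper itself omits the details, deferring to Proposition~9.2 of \cite{GKS11}, whose argument is exactly the one you outline and is carried out explicitly for the sibling estimates in Appendices~\ref{sec:higherWei}--\ref{sec:proof723}.
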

The proof is essentially identical to that of Proposition~9.2 from~\cite{GKS11}, because it relies only
on the outputs of the first bootstrap machine, hence does not need the discrete symmetry assumptions
that were in force there. So we omit it here.
\medskip

Next we turn to  estimating the functions $v_{\pm}$ and $(\dy v)_{\pm}$ defined by
formula~\eqref{DefinePM}. These estimates represent a departure from \cite{GKS11}.
In what follows, we present the difficulties and the ideas used to overcome them when
estimating $v_{\pm}.$ The arguments for  $(\dy v)_{\pm}$ are very similar, hence omitted.
Our objective is to prove the following.

\begin{lemma}	\label{LemmaB}
For all times that the assumptions of Section~\ref{SBMI} hold, $M_4\ls1$, i.e.
\begin{equation}\label{eq:higherWeight}
\Big\|\langle y\rangle^{-5}\partial_{\theta}\xi e^{\frac{ay^2}{4}}\Big\|_{L^\infty}\ls\beta^{\frac{21}{10}}.
\end{equation}
\end{lemma}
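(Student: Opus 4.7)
The strategy is to derive a Duhamel representation for $\zeta := \dz\xi$ and exploit a sharp weighted $L^\infty$ decay estimate for the propagator of $-L(a,b)$ on the subspace cut out by the orthogonality \eqref{eq:orthoxi}. Since $\dz$ commutes with both $L(a,b)$ and $\dt$, and $\dz F_1 = 0$ because $F_1$ is $\theta$-independent, differentiating \eqref{eq:xi} in $\theta$ gives
$$\dt\zeta = -L(a,b)\zeta + \dz F_2(a,b,\beta) + \dz(N_1+N_2+N_3).$$
Duhamel with the propagator $U(\tau,s)$ of $-L(a,b)$ then yields
$$\zeta(\tau) = U(\tau,0)\zeta(0) + \int_0^\tau U(\tau,s)\,\dz\!\left[F_2+N_1+N_2+N_3\right](s)\,ds,$$
and the target is to bound $\|\langle y\rangle^{-5}e^{ay^2/4}\zeta(\tau)\|_{L^\infty}$ by $\beta^{21/10}(\tau)$.

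The central analytic step is the propagator estimate. The orthogonality \eqref{eq:orthoxi} forces the $|k|=1$ Fourier modes (in $\theta$) of $\xi$ to be perpendicular in the Gaussian-weighted $L^2$ to $e^{-ay^2/4}$ and $ye^{-ay^2/4}$; the $k=0$ mode is annihilated by $\dz$; and for $|k|\geq 2$, the term $-\tfrac12\dz^2$ already contributes $k^2/2\geq 2$ to the spectrum of $L(a,b)$. Together these produce a uniform spectral gap $\alpha>0$ (of order $1/2$) on the space in which $\zeta$ lives. Absorbing the non-autonomy via a change of time variable using $|a_\tau|,|b_\tau|\ls\beta$ from Lemma~\ref{LM:traj}, and using a Mehler-type kernel bound adapted to the weight $\langle y\rangle^{-5}e^{ay^2/4}$, as in the propagator analysis of \cite{GKS11}, should yield
$$\|\langle y\rangle^{-5}e^{ay^2/4}U(\tau,s)g\|_{L^\infty}\ls e^{-\alpha(\tau-s)}\|\langle y\rangle^{-5}e^{ay^2/4}g\|_{L^\infty}.$$

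Granted such a bound, each Duhamel piece is straightforward. For initial data, Assumption~[A1] controls $(u_0)_\pm$ and $(\dx u_0)_\pm$ at level $b_0^{53/20}\ll\beta_0^{21/10}$. For the source $\dz F_2$, the explicit form together with Lemma~\ref{LM:traj}'s bounds $|\beta_{1,2}|\ls\beta^{18/5}$ and $|\beta_{3,4}|\ls\beta^{13/5}$ gives a driving term strictly better than $\beta^{21/10}$. For the nonlinearities, Lemma~\ref{Prop:Gam12} provides $|M|\ls 1$, hence weighted bounds on $\phi$ and its derivatives, which combined with the first-bootstrap outputs \eqref{eq:lower}--\eqref{eq:thetaDe}, and especially the smallness \eqref{eq:smallness}, give $\|\langle y\rangle^{-5}e^{ay^2/4}\dz(N_1+N_2+N_3)\|_{L^\infty}\ls\beta^{2+\epsilon}$ for some $\epsilon>0$. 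Since $\beta$ is slowly varying, $\int_0^\tau e^{-\alpha(\tau-s)}\beta(s)^{2+\epsilon}\,ds\ls\beta(\tau)^{2+\epsilon}$, delivering the desired bound.

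The main obstacle is the sharp weighted $L^\infty$ propagator estimate itself, since the weight $\langle y\rangle^{-5}e^{ay^2/4}$ is not compatible with the Gaussian-$L^2$ framework in which the spectral theory of $L(a,b)$ naturally sits; the estimate must therefore be built by direct kernel analysis while simultaneously absorbing the non-autonomy and respecting the orthogonality constraint. A secondary bookkeeping point is the bootstrap feedback from Lemma~\ref{LM:traj}, where $M_4$ itself enters the $(\mathrm{Small})_\ell$ terms; a quick check that $\beta^{31/20}\cdot\beta^{21/10}=\beta^{73/20}\ll\beta^{21/10}$ confirms that this feedback is strictly subdominant and the bootstrap closes.
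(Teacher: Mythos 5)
Your proposal follows essentially the same route as the paper: differentiate the $\xi$-equation in $\theta$, apply Duhamel, exploit the orthogonality conditions inherited by $\dz\xi$ together with a weighted $L^\infty$ propagator decay estimate for the (reparametrized, autonomous) Hermite-type operator, and bound the source terms via Lemma~\ref{LM:traj} and the first-bootstrap outputs; the paper carries out the de-autonomization explicitly via the gauge $\lambda_1,\alpha,z,\sigma,\eta$ and moves the bounded potential $\tfrac{2-2a}{2+by^2}$ into the source term $D_1$ rather than keeping it in the generator. One small correction: the initial-data bound actually invoked is $\|\langle x\rangle^{-5}\dz u_0\|_{L^\infty}<b_0^{21/10}$ from Assumption~[A1], not the $b_0^{53/20}$ bounds on $(u_0)_\pm$ and $(\dx u_0)_\pm$ (those feed into Lemma~\ref{LemmaC}).
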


\begin{lemma}	\label{LemmaC}
For all times that the assumptions of Section~\ref{SBMI} hold, in the region $\beta y^2\leq 20$,
one has
\begin{equation}	\label{New-xi-estimate}
\Big|e^{\frac{ay^2}{4}}\xi_{\pm}\Big| + \Big|(\partial_{y}e^{\frac{ay^2}{4}}\xi)_{\pm}\Big|
\ls \beta^{\frac{21}{10}}.
\end{equation}
\end{lemma}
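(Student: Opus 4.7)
The plan is to derive and analyze a scalar parabolic equation in $(y,\tau)$ for $\xi_\pm(y,\tau)$ obtained by projecting \eqref{eq:xi} onto $e^{\pm i\theta}$, and to exploit the enhanced orthogonality $\xi_\pm\perp\{e^{-ay^2/4},\,ye^{-ay^2/4}\}$ implied by~\eqref{eq:orthoxi}. Since $\partial_\theta^2 e^{\pm i\theta}=-e^{\pm i\theta}$, the restriction of $L(a,b)$ to the $e^{\pm i\theta}$ angular sector is the scalar operator
\[
L_\pm(a,b):=-\dy^2+\frac{a^2+\partial_\tau a}{4}y^2-\frac{3a}{2}-\frac{2-2a}{2+by^2}+\frac12.
\]
As a self-adjoint operator on $L^2(\bR,e^{-ay^2/2}\,\Dy)$, its leading-order spectrum is $\{a(j-2)+\tfrac12\}_{j\geq 0}$; the orthogonality conditions extract precisely the (weakly unstable) modes $j=0,1$, corresponding to $\cos\theta,\sin\theta,y\cos\theta,y\sin\theta$. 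Hence, under Condition~[Ca], the operator $L_\pm(a,b)$ is bounded below by a positive constant uniformly in $\tau$ on the orthogonal complement. This spectral gap is the source of decay.

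Next, I would write the PDE for $\xi_\pm$,
\[
\partial_\tau \xi_\pm=-L_\pm(a,b)\xi_\pm+(F(a,b,\beta))_\pm+\bigl(N_1+N_2+N_3\bigr)_\pm,
\]
and apply Duhamel's principle with the propagator $U_\pm(\tau,\sigma)$ generated by $-L_\pm$. The propagator estimates needed are entirely analogous to those used in the proof of Lemma~\ref{Prop:Gam12}, but now adapted to a scalar weighted-$L^\infty$ norm in the inner region, exploiting the spectral gap from the first step. Schematically one obtains
\[
\|e^{ay^2/4}\xi_\pm(\cdot,\tau)\|_{L^\infty(\beta y^2\le 20)}\ls e^{-c\tau}\|e^{ay^2/4}\xi_\pm(\cdot,0)\|_{L^\infty}+\int_0^\tau e^{-c(\tau-\sigma)}\,\mathcal{F}_\pm(\sigma)\,\mathrm{d}\sigma,
\]
where $\mathcal{F}_\pm$ is the weighted-$L^\infty$ size of the forcing. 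The initial data are controlled by Assumption~[A1] (in particular $\|\langle x\rangle^{-11/10}(u_0)_\pm\|_{L^\infty}<b_0^{53/20}$), comfortably below the target $\beta^{21/10}$. The inhomogeneity $(F)_\pm$ is driven by the parameters $\beta_1,\dots,\beta_4$, which Lemma~\ref{LM:traj} bounds by $\beta^{11/5}$ or better, and the spectral gap converts this to $\beta^{21/10}$ after convolving with the propagator. The derivative estimate for $(\partial_y e^{ay^2/4}\xi)_\pm$ is obtained by commuting $\partial_y$ through the equation: up to a Hermite shift, $\partial_y$ intertwines with $L_\pm$ (raising the spectral gap further), and the same Duhamel argument closes, with the new nonlinear error terms controlled by the first-bootstrap outputs \eqref{eq:yDe}--\eqref{eq:thetaDe} and Lemma~\ref{Prop:Gam12}.

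The main obstacle is the estimation of the projected nonlinear terms $(N_1)_\pm,(N_2)_\pm,(N_3)_\pm$ at the sharp rate $\beta^{21/10}$. These nonlinearities are built from products of $\dy v$, $v^{-1}\dz v$, $\dy^2 v$ and so on, whose projections onto a single angular mode do not decouple cleanly. One must isolate the dominant contributions, which come from the interactions between the isotropic part $V_{a,b}$, the $\beta_k$ terms, and $\xi_\pm$ itself, and verify that each such contribution is at most $\beta^{21/10}$ in the relevant weighted $L^\infty$-norm; then one must show that the ``genuine'' higher-mode couplings surviving the projection are absorbed using Lemma~\ref{LemmaB}, which provides exactly the control on $\dz \xi \,e^{ay^2/4}$ with the weight $\langle y\rangle^5$ needed to dispose of the remaining cross terms in the inner region. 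Once the forcing is closed off at the rate $\beta^{21/10}$, the spectral gap supplies the exponential decay of the propagator and completes the bootstrap.
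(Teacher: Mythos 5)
Your overall architecture --- project \eqref{eq:xi} onto the $e^{\pm i\theta}$ modes, use the enhanced orthogonality $\xi_\pm\perp\{e^{-ay^2/4},\,ye^{-ay^2/4}\}$ to remove the nonpositive spectrum, and run Duhamel with weighted-$L^\infty$ propagator estimates in an autonomous gauge --- is the paper's. But two of your steps, as stated, do not close at the rate $\beta^{21/10}$. The first concerns the forcing from $F_2$. You bound the $\beta_k$-driven inhomogeneity by $|\beta_k|\ls\beta^{11/5}$ and assert that the spectral gap ``converts this to $\beta^{21/10}$.'' It does not: an order-one gap convolved against a forcing of size $\beta^{11/5}$ returns $\beta^{11/5}$ in the same weighted norm, and converting a $\langle y\rangle^{-k}$-weighted bound into a pointwise bound in the inner region costs $\beta^{-k/2}$. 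Since $\beta_3y\cos\theta$ and $\beta_4y\sin\theta$ carry a factor $y\sim\beta^{-1/2}$, the best this accounting yields is $\beta^{11/5-1/2}=\beta^{17/10}$, short of the target, for any admissible weight. What the paper uses is an exact cancellation: in the $e^{\pm i\theta}$ sector, $1$ and $y$ are eigenfunctions of $\dy^2+V_{a,b}^{-2}\dz^2-ay\dy+a+V_{a,b}^{-2}$, so $(F_2)_\pm$ reduces to $[a-\partial_\tau]\beta_{1,2}$ and $-(\partial_\tau\beta_{3,4})\,y$, and the \textsc{ode} of Lemma~\ref{LM:traj} bound these combinations by $\beta^{16/5}$ at worst --- a full power of $\beta$ better than $|\beta_k|$ itself. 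Without isolating this cancellation (or, equivalently, estimating the $\bP_2$-projection error of the $\beta_k$-forcing, which is again $O(\beta|\beta_k|)$), the exponent $21/10$ is out of reach.

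The second gap concerns the linear operator. You keep the potential $\tfrac12-\tfrac{2-2a}{2+by^2}$ inside $L_\pm$ and claim the orthogonality conditions ``extract precisely the modes $j=0,1$.'' That potential is not a small perturbation --- it tends to $\tfrac12$ as $|y|\to\infty$ and is already of order one at the edge of the inner region --- so $e^{-ay^2/4}$ and $ye^{-ay^2/4}$ are not eigenfunctions of $L_\pm$, and the explicit Mehler-kernel propagator estimates of Lemma~\ref{Nested} do not apply to it. Nor can it be moved into the forcing: $[\tfrac12-V_{a,b}^{-2}]\xi_\pm$ costs two powers of the weight, so it cannot be absorbed by a Gronwall argument in the norm of $\xi_\pm$ being estimated. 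The resolution is another exact cancellation: the $e^{\pm i\theta}$-projection of $N_{3,1}=[V_{a,b}^{-2}-\tfrac12]\dz^2\xi$ equals $-[V_{a,b}^{-2}-\tfrac12]\xi_\pm$ and annihilates the potential, leaving the pure Hermite operator $L_0(a)=-\dy^2+\tfrac{a^2+\partial_\tau a}{4}y^2-\tfrac{3a}{2}$ (this is the ``fortuitous cancellation'' between $-v^{-1}$ and $v^{-2}F_2\dz^2v$ in the paper's direct derivation from \eqref{MCF-v}), for which the orthogonality conditions are exact and the kernel is explicit. A final quantitative point: the initial-data term is not ``comfortably below'' the target; with the weight $\langle y\rangle^{-11/10}$, the slower gap $e^{-\alpha\sigma/10}$ for that weight, and the bound $b_0^{53/20}$ from [A1], it produces exactly $\beta^{53/20-11/20}=\beta^{21/10}$ in the inner region and is what fixes the exponent in the lemma.
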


Lemmas~\ref{LemmaB} and \ref{LemmaC} are proved in Appendices~\ref{sec:higherWei} and \ref{sec:proof723},
respectively.
\medskip

The difficulties encountered in their proofs are as follows. The decomposition of $v$ implies that
$$
|v_{\pm}|\leq |\beta_1|+|\beta_2|+|\beta_3y |+|\beta_4y|+|\xi_{\pm}|e^{\frac{ay^2}{4}},
$$
where the $\theta$-independent functions $\xi_{\pm}$ are defined using ~\eqref{DefinePM}.
For the purposes of Lemmas~\ref{LemmaB} and \ref{LemmaC}, the estimates
$|\beta_3|+|\beta_4|=\cO(\beta^{\frac{11}{5}})$ from ~\eqref{eq:beta01234} are obviously not good enough.
In the region $by^2\leq 20$, they only give $|\beta_{k}y|\ls\beta^{\frac{8}{5}}$, which is significantly slower than the 
desired $\beta^{\frac{21}{10}}$ decay. The estimates for $\xi$ in Lemma~\ref{Prop:Gam12} are not good enough 
either. As noted above, the reason we want to prove stronger decay is so that the first bootstrap machine can function 
without needing discrete symmetry hypotheses. So we must derive improved estimates for  $|\partial_{\theta}\xi|$.
But it is not difficult to see that $\partial_{\theta}\xi$ admits better decay estimates than does $\xi$. 
Differentiation allows us to  remove the slowly-decaying $\theta$-independent components of the
solution, and thereby to obtain improved estimates.

In the remainder of this section, we outline the main ideas used in proving estimate~\eqref{New-xi-estimate} in
Appendix~\ref{sec:proof723}. The steps used to prove \eqref{eq:higherWeight} in Appendix~\ref{sec:higherWei}
are similar, hence will not be discussed here.

Using~\eqref{eq:xi} and recalling definition \eqref{DefinePM} , we compute that
$$
\partial_{\tau}\xi_{\pm}=-L_{0}(a)\xi_{\pm}+\cdots,
$$
where 
$$L_0(a) := - \dy^2 + \frac{a^2 + \partial_\tau a }4 y^2 -\frac{3a}2.$$
Our strategy for estimating $\xi_{\pm}$ is similar to that used to estimate $\xi$ in the proof of Lemma~\ref{Prop:Gam12}, 
and hence has similar difficulties: the linear operator $L_{0}(a)$ is time-dependent and has nonpositive eigenvalues
with eigenvectors $e^{-\frac{ay^2}{4}}$ and $ye^{-\frac{ay^2}{4}}.$ To take care of these eigendirections, we use the 
orthogonality conditions imposed on $\xi$ in ~\eqref{eq:orthoxi} to see that
$\xi_{\pm}\perp\mathrm{span}\{ e^{-ay^2/4},\ ye^{-ay^2/4}\}$.
The intuition behind the argument is that we obtain good estimates for $\xi_{\pm}$ by applying $L_0(a)$ to
the orthogonal complement of the finite-dimensional unstable subspace. We make this rigorous in
Appendix~\ref{sec:proof723}. Once that work is done, our final result follows readily:

\begin{proof}[Proof of estimate~\eqref{eq:vpmEst} in Theorem ~\ref{THM:aprior}]

We use the decomposition of $v$ in ~\eqref{eqn:split2} to relate $\xi_{\pm}$ to $v_{\pm}$, obtaining
\begin{equation*}
|v_{\pm}|\leq |\beta_1|+|\beta_2|+|\beta_3y |+|\beta_4y|+|\xi_{\pm}|e^{\frac{ay^2}{4}}.
\end{equation*}
The estimate for $\xi_{\pm}$ in Lemma~\ref{LemmaC} and the estimates for $\beta_k$ $(k=1,\dots,4)$ in
Lemma~\ref{LM:traj} then imply that
$|v_{\pm}(y,\theta,\tau)|\ls \beta^{\frac{21}{10}}(\tau)$.

The estimate for  $(\partial_{y}v)_{\pm}$ is obtained similarly.
\end{proof}

\section{Proof of the Main Theorem}	\label{BigFinish}
In this section, we collect the remaining arguments needed to complete the proof of our Main Theorem,
modulo the technical details collected in the appendices.
\begin{proof}[Proof of the Main Theorem]
By placing an Angenent self-similarly shrinking torus around the (approximate) center of the neck, one
sees easily that the solution must become singular before some time $T^*<\infty$. So suppose that $[0,T_*)$
is the maximal time interval such that for any time $t\in[0,T_*)\subseteq[0,T^*)$, we can construct an optimal
coordinate system in which
\begin{equation}	\label{OptimalDecomposition}
u(x,\theta,t)=\lambda_\opt(t)\left\{\sqrt{\frac{2+b_\opt(t) y^2}{1+\frac{1}{2}b_\opt(t)}}
+\phi_\opt(y,\theta,t)\right\},
\end{equation}
where $y=\lambda_\opt^{-1}x$, and $\phi$ satisfies the orthogonality conditions of
Definition~\ref{OptimalCoordinates}, along with the estimate
$$
\big\|\langle y\rangle^{-3} \phi_\opt(\cdot,\cdot,t)\big\|_{L^\infty}\ls b_\opt^{8/5}(t).
$$
In this case, we claim that for any sequence of times $t_n\nearrow T_*$ at which
we construct optimal coordinate systems, one has $b_\opt(t_n)\rightarrow0$ and
$\lambda_\opt(t_n)\rightarrow 0$ as $n\rightarrow\infty$. By the estimates
in Theorem~\ref{THM:aprior} (see also Corollaries~\ref{thm:estimateT0}--\ref{prop:differenceCoor})
for the components of the decomposition~\eqref{OptimalDecomposition}, this
implies that the surface must become singular as $t\nearrow T_*$.

We prove the claim by contradiction, showing that if either quantity $b_\opt(t_n)$
or $\lambda_\opt(t_n)$ has a positive lower bound, then the other does also, which
implies that the solution can be extended past $T_*$, contradicting the assumption that
$T_*$ is maximal. We provide a detailed argument for the case that there exists a constant $c_\infty>0$
such that $b_\opt(t_n)\geq c_\infty$. (An analogous argument works if
 $\lambda_\opt(t_n)\geq c'_\infty$.) 

With respect to an optimal coordinate system constructed at $t_n$, there exists a time interval
$[t_{n-1},t_n]\subseteq[0,t_n]$ in which the solution $v=\lambda^{-1}u$ can be parameterized
as in \eqref{tilde-phi}. Using the fact that $\dt\log\lambda=-a$, and the boundary conditions
stipulated in Definition~\ref{OptimalCoordinates} that ensure that $\lambda_\opt=\lambda$
at any times at which we construct optimal coordinates, we see that
$$
\lambda_\opt(t_n)=e^{-\int_{t_{n-1}}^{t_n}a(\hat\tau)\,\mathrm d\hat\tau}\,\lambda_\opt(t_{n-1}).
$$
Then using the upper bound for $b$ in Corollary~\ref{thm:estimateT0}, the consequence of
estimate~\eqref{eq:abab} that $a=\frac12+\mathcal O(b)$ is bounded independently of $n$,
and the upper bound $T_*\leq T^*$, we conclude that there exists $c>0$
independent of $n$ such that $\lambda_\opt(t_n)\geq c\lambda_\opt(0)$. Because
$n$ was arbitrary, this contradicts the maximality of $T_*$ and proves the claim.

Part~\textsc{(i)} of the theorem follows directly from the claim.

Part~\textsc{(ii)} then
follows from Corollary~\ref{prop:differenceCoor}, because the claim implies that
we can construct optimal coordinate systems up to the singular time.

Proving Part~\textsc{(iii)} takes more work.
Obtaining asymptotics for the sequential parameters $\lambda_\opt(t_n)$ and $b_\opt(t_n)$
that determine the ``main components'' of the solution's decomposition is complicated by the
fact that we do not have \textsc{ode} for them; we only have \textsc{ode} for the quantities
$\lambda$ and $b$ that depend smoothly on the sequential choices of $\lambda_\opt(t_n)$
and $b_\opt(t_n)$. So we proceed as follows.
Working in a time interval $[t_n,t_m]$, we use the relation $\lambda\partial_t\lambda=-a$
to see that
$$
\lambda^2(t)=\lambda_\opt^2(t_m)+\int_t^{t_m}2a(s)\,\mathrm ds.
$$
By the estimates for $a$ in Corollary~\ref{thm:estimateT0} and for $b$ in
Corollary~\ref{prop:differenceCoor}, this implies that
$$
\lambda_\opt^2(t_n)=\big(1+\mathcal O(b_\opt(t_m)\big)
\big\{\lambda_\opt^2(t_m)+(t_m-t_n)\big\}.
$$
Letting $m\rightarrow\infty$ and using the fact proved above that
$\lambda_\opt(t_m)\searrow0$ as $m\rightarrow\infty$, we 
conclude that the asymptotic behavior of $\lambda_\opt(t_n)$ as $n\rightarrow\infty$ is
\begin{equation}	\label{lambdaopt-asymptotics}
\lambda_\opt(t_n)=\big(1+o(1)\big)\sqrt{T-t_n}.
\end{equation}
Next, by the estimate for $a$ in Corollary~\ref{thm:estimateT0} and the fact that
$\frac{d\tau}{d\lambda}=-\frac{1}{a}$, we find that as $n\rightarrow\infty$, one has
\[
\tau(t_n)=\big(1+o(1)\big)\log\frac{1}{T-t_n}.
\]
Then using the estimates for $b$ and $b_\opt$ in Corollaries~\ref{thm:estimateT0}--\ref{prop:differenceCoor},
it follows that as $n\rightarrow\infty$,
\begin{equation}	\label{bopt-asymptotics}
b_\opt(t_n)=\big(1+o(1)\big)\left(\log\frac{1}{T-t_n}\right)^{-1}.
\end{equation}
Equations~\eqref{lambdaopt-asymptotics} and \eqref{bopt-asymptotics}
establish Part~\textsc{(iii)} of the theorem.

Part~\textsc{(iv)} of the theorem follows directly from the outputs of the first and
second bootstrap machines, as stated in Sections~\ref{FirstOutput} and \ref{SecondOutput}, respectively.

The proof is complete.
\end{proof}

\appendix

\section{Proof of Lemma~\ref{LM:traj}}\label{Sec:Splitting}
In this appendix, we prove Lemma ~\ref{LM:traj}. To avoid unenlightening repetition,
we provide detailed arguments only for the $a$, $b$, $\beta_0$, and $\beta_3$
evolution equations and their estimates. The arguments for the others are almost identical.

\subsection{Proofs of estimates~\eqref{eq:a} and \eqref{eq:b}}
The derivations of \eqref{eq:a} and ~\eqref{eq:b} are almost identical to
those in our previous work \cite{GKS11}, hence are only sketched here.

We rewrite $F_{1}(a,b)$ in the form
\begin{align*}
F_{1}(a,b) & = e^{-\frac{ay^2}{4}}\frac{1}{\sqrt{2-2a}\sqrt{2+by^2}}
\left[\frac{2b}{2+by^2}+4a-2-\frac{1}{2}b_{\tau}y^2-\frac{1}{1-a}a_{\tau}-\frac{a_{\tau}b y^2}{2-2a}\right]\\
& = e^{-\frac{ay^2}{4}}\frac{1}{\sqrt{2-2a}\sqrt{2+by^2}} 
\left[\Gamma_1-\frac{1}{2}\Gamma_2 y^2+\Gamma_3\right],
\end{align*}
with $\Gamma_{k}$ $(k=1,2,3)$ defined as
\begin{align*}
\Gamma_1&:=b+4a-2-\frac{1}{1-a} a_{\tau},\\
\Gamma_2&:=b^2+\partial_{\tau}b,\\
\Gamma_3&:=\frac{b^3 y^4}{2(2+by^2)}-\frac{a_{\tau}b y^2}{2-2a}.
\end{align*}
Then we take an inner product of \eqref{eq:xi} with the functions $e^{-\frac{a}{4}y^2}$ and $(ay^2-1)e^{-\frac{a}{4}y^2}$, 
applying the orthogonality conditions of \eqref{eq:orthoxi} to obtain
\begin{align}\label{eq:gamma1}
|\Gamma_1|+|\Gamma_2|\ls \beta^{\frac{5}{2}}
\end{align} 
Recalling the definitions of $\Gamma_1$ and $\Gamma_2$ above, it is easy to see that this estimate
implies \eqref{eq:a} and \eqref{eq:b}.

\subsection{Proof of the estimates in \eqref{eq:abab}}

\begin{proof}[Proof of the estimates in \eqref{eq:abab}]
We write $\Gamma_1$ as
\begin{equation*}
\Gamma_1  =\frac{1}{1-a} \left[2\tilde\Gamma_1-\partial_{\tau}\tilde\Gamma_1
+\frac{1}{4}\partial_{\tau}b-2\tilde\Gamma_1^2+\frac{1}{8}b^2\right],
\end{equation*} 
with $\tilde\Gamma_1:=a-\frac{1}{2}+\frac{1}{4} b$.
To estimate the various components, we use the assumptions
$$
\max_{s\leq \tau}|b(s)-\beta(s)|\beta^{-\frac{3}{2}}(s)=B(\tau)\ls \beta^{-\frac{1}{20}}(\tau)
$$
and
$$
\max_{s\leq \tau}\left|a-\frac{1}{2}+\frac{1}{4}b\right|\beta^{-\frac{5}{2}}(s)
$$
to obtain $b\leq 2\beta$ and $|\tilde\Gamma_1 |\ls \beta^2.$ Hence we get
\begin{equation}\label{eq:BGamma2}
|\partial_{\tau}\tilde\Gamma_1-2\tilde\Gamma_1|\ls \beta^2.
\end{equation}  

By the boundary condition $a(\tau(t_{\#}))=\frac{1}{2}-\frac{1}{4}b(\tau(t_{\#}))$ in \eqref{eq:boundary}, we have
\begin{align*}
\tilde\Gamma_1(\tau_{\#})=0,
\end{align*} where we denote $\tau(t_{\#})$ by $\tau_{\#}$.
We rewrite \eqref{eq:BGamma2} as
\begin{equation}\label{eq:gamma2}
\begin{split}
|\tilde\Gamma_1(\tau)| &\ls  \int_{\tau}^{\tau_{\#}} e^{2(\tau-s)}\beta^{2}(s)\,\mathrm ds\\
&\ls  \beta^{2}(\tau)\int_{\tau}^{\tau_{\#}}e^{2(\tau-s)}\,\mathrm ds\\
&\ls  \beta^{2}(\tau),
\end{split}
\end{equation}
and then we use the definition of $A$ in \eqref{eq:majorA} to obtain the first estimate
in \eqref{eq:abab}.

The argument used to estimate $b$ is identical to the corresponding argument from~\cite{GKS11}.
We rewrite $\Gamma_2$ and use estimate~\eqref{eq:gamma1} to see that
\begin{align*}
\Big|\partial_{\tau}\frac{1}{b}-1\Big| = \Big|\partial_{\tau}\Big[\frac{1}{b}-\frac{1}{\beta}\Big]\Big|
\ls \beta^{\frac{1}{2}}.
\end{align*} 
Recall that $\beta:=\big(b_0^{-1}+\tau\big)^{-1}$.
It is clear that $\big[\frac{1}{b}-\frac{1}{\beta}\big]_{\tau=0}=0,$ and consequently that
\begin{align*}
\Big|\frac{1}{b}-\frac{1}{\beta}\Big|(\tau)
\ls \int_0^{\tau} \beta^{\frac{1}{2}}(\tau)\ d\tau\ls \beta^{-\frac{1}{2}}(\tau).
\end{align*} 
Finally, recalling the definition of $B$ in ~\eqref{eq:majorB}, we obtain
$|b(\tau)-\beta(\tau)|\ls \beta^{\frac{3}{2}}(\tau)$ and $B\ls1$.
\end{proof}

The following facts will be used frequently in the rest of the paper.

\begin{lemma} 
For all times that the assumptions in Section~\ref{SBMI} hold, one has
\begin{equation}\label{eq:ataubtau}
|a_{\tau}|+|b_{\tau}|\ls \beta^2,
\end{equation}
and 
\begin{equation}   \label{eq:bbetaRatio}
1-\beta^{\frac{1}{4}}\leq \frac{b}{\beta}\leq 1+\beta^{\frac{1}{4}}.
\end{equation}
\end{lemma}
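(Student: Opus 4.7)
Both estimates follow by combining the first-order differential identities \eqref{eq:a}--\eqref{eq:b} already derived in the previous subsections with the algebraic bounds \eqref{eq:abab}. The plan is simply to substitute the latter back into the former; no new machinery is needed.

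\emph{Estimate on $a_\tau$.} I would first solve \eqref{eq:a} algebraically for $a_\tau$, obtaining
\[
a_\tau = \frac{1-a}{2}\bigl[\,2b + 4(2a-1)\,\bigr] + \mathcal O(\beta^{5/2}).
\]
The key observation is that the bracket vanishes to leading order: by \eqref{eq:abab}, $2a-1 = -\tfrac12 b + \mathcal O(\beta^{2})$, so $2b+4(2a-1) = 2b-2b+\mathcal O(\beta^{2}) = \mathcal O(\beta^{2})$. Since Condition~[Ca] bounds $|1-a|\leq \tfrac12+\kappa_0^{-1}$, this gives $|a_\tau|\lesssim \beta^{2}+\beta^{5/2}\lesssim \beta^{2}$.

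\emph{Estimate on $b_\tau$.} Equation \eqref{eq:b} reads $b_\tau = -b^{2} + \mathcal O(\beta^{5/2})$. From \eqref{eq:abab} one has $b=\beta+\mathcal O(\beta^{3/2})$, so $b^{2}=\beta^{2}+\mathcal O(\beta^{5/2})$, and therefore $|b_\tau|\leq b^{2}+\mathcal O(\beta^{5/2})\lesssim \beta^{2}$. Adding the two bounds gives \eqref{eq:ataubtau}.

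\emph{Estimate on $b/\beta$.} This is immediate from \eqref{eq:abab}: dividing the bound $|b-\beta|\leq C\beta^{3/2}$ by $\beta$ yields $|b/\beta - 1|\leq C\beta^{1/2}$. Since $\beta(\tau)\leq\beta_0=b_0\ll 1$ (with $b_0$ chosen sufficiently small in the Main Assumptions), $C\beta^{1/2}=C\beta^{1/4}\cdot\beta^{1/4}\leq \beta^{1/4}$, which is exactly \eqref{eq:bbetaRatio}. The only quantitative point to verify is that $b_0$ is small enough that $C\beta_0^{1/4}\leq 1$; this is automatic for the range of $b_0$ already required elsewhere.

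The entire argument is a short substitution, with no genuine obstacle; the one subtlety worth highlighting is the near-cancellation $2b+4(2a-1)=\mathcal O(\beta^{2})$, which is precisely the content of the sharp relation between $a$ and $b$ established in \eqref{eq:abab} and is what allows the $a_\tau$ bound to improve from the raw $\mathcal O(\beta)$ one would get from \eqref{eq:a} alone to the sharp $\mathcal O(\beta^{2})$ claimed here.
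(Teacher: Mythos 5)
Your proposal is correct and follows essentially the same route as the paper: both rest on the ODEs \eqref{eq:a}--\eqref{eq:b} together with the bound $|\,a-\tfrac12+\tfrac14 b\,|\ls\beta^2$ and $|b-\beta|\ls\beta^{3/2}$ from \eqref{eq:abab}, the latter giving \eqref{eq:bbetaRatio} by division exactly as you say. The only cosmetic difference is that for $a_\tau$ the paper reads the bound off the identity $|\partial_\tau\tilde\Gamma_1-2\tilde\Gamma_1|\ls\beta^2$ with $\tilde\Gamma_1=\cO(\beta^2)$ (plus the $b_\tau$ bound), whereas you solve \eqref{eq:a} directly and invoke the cancellation $2b+4(2a-1)=8\tilde\Gamma_1=\cO(\beta^2)$ — the same fact in different clothing.
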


\begin{proof}
To obtain estimate~\eqref{eq:ataubtau}, we combine the estimate for $b_{\tau}$ from \eqref{eq:gamma1}, the estimate
for $a_{\tau}$ from \eqref{eq:BGamma2}, and the estimate $\tilde\Gamma_1=\cO(\beta^2)$ from \eqref{eq:gamma2}.

Estimate~\eqref{eq:bbetaRatio} follows from the observation that
$$
\max_{s\leq \tau}|b(s)-\beta(s)|\beta^{-\frac{3}{2}}(s)=B(\tau)\ls 1.
$$
\end{proof}

\subsection{Proofs of estimates~\eqref{eq:26in} and ~\eqref{eq:estbeta0}}		\label{Cancellations}

We start with proving ~\eqref{eq:26in}, using methods outlined in the discussion
before Lemma~\ref{LM:traj}.

\begin{proof}[Proof of estimate~\eqref{eq:26in}]
Following the approach outlined in our introduction to Lemma~\ref{LM:traj}, we take the
inner product of \eqref{eq:xi} with $ye^{-\frac{ay^2}{4}}$  to obtain
\begin{equation}\label{eq:beta0}
\left\langle ye^{-\frac{ay^2}{4}},\ \partial_{\tau}\xi\right\rangle = \sum_{k=1}^{5}A_{k},
\end{equation}
where the terms on the \textsc{rhs} are defined by
\begin{align*}
A_{1}:=&- \left\langle ye^{-\frac{ay^2}{4}},\ \left[-\partial_{y}^2+\frac{a^2+a_{\tau}}{4}y^2-\frac{3a}{2}-\frac{1}{2}\partial_{\theta}^2-\frac{\frac{1}{2}+a}{2+by^2} \right]\xi \right\rangle
+ \left\langle ye^{-\frac{ay^2}{4}},\ N_{3,1} \right\rangle\\
=&- \left\langle ye^{-\frac{ay^2}{4}},\ \left[-\partial_{y}^2+\frac{a^2+a_{\tau}}{4}y^2-\frac{3a}{2}
-V_{a,b}^{-2}\partial_{\theta}^2-\frac{\frac{1}{2}+a}{2+by^2}\right]\xi \right\rangle,
\end{align*}
and
\begin{align*}
A_2 & := \Big\langle ye^{-\frac{ay^2}{4}}, \ F(a,b,\beta) \Big\rangle,\\
A_3 & := \Big\langle y e^{-\frac{ay^2}{4}}, \ N_1 \Big\rangle,\\
A_4 & := \Big\langle ye^{-\frac{ay^2}{4}}, \ N_2 \Big\rangle,\\
A_5 & :=  \Big\langle ye^{-\frac{ay^2}{4}}, N_{3,2} \Big\rangle.
\end{align*}
We now show how the terms in equation~\eqref{eq:beta0} are controlled.

For the \textsc{lhs} of \eqref{eq:beta0}, we use the orthogonality condition
$\xi\perp  ye^{-\frac{ay^2}{4}}$ in \eqref{eq:orthoxi} to obtain
\begin{align*}
\Big\langle ye^{-\frac{ay^2}{4}},\ \partial_{\tau}\xi \Big\rangle
&= \partial_{\tau} \Big\langle ye^{-\frac{ay^2}{4}},\ \xi \Big\rangle
+ \frac{a_{\tau}}{4}\Big\langle y^3e^{-\frac{ay^2}{4}},\ \xi\Big\rangle\\
&=\cO\Big(|a_{\tau} |\|\langle y\rangle^{-3}e^{\frac{ay^2}{4}}\xi\|_{L^\infty}\Big)\\
&= \cO(\beta^{\frac{71}{20}}).
\end{align*}
Then we use the estimate $|a_{\tau}|=\cO(\beta^2)$ from \eqref{eq:ataubtau}, and the assumption that
$M_{3,0}\ls \beta^{-\frac{1}{20}}$ to obtain
$$\Big\|\langle y\rangle^{-3} e^{\frac{ay^2}{4}}\xi\Big\|_{L^\infty}
\ls \beta^{\frac{8}{5}}M_{3,0}\ls \beta^{\frac{31}{20}}.$$

In $A_1$, many terms cancel because of the fact that $\xi\perp ye^{-\frac{ay^2}{4}}$, where
$ye^{-\frac{ay^2}{4}}$ is an eigenfunction  of the self-adjoint operator
$-\partial_{y}^2+\frac{a^2}{4}y^2-V_{a,b}^{-2}\partial_{\theta}^2$. We compute that
\begin{align*}
A_1 & = - \Big\langle ye^{-\frac{ay^2}{4}},\ [-\partial_{y}^2 
+\frac{a^2}{4}y^2-\frac{3a}{2}-V_{a,b}^{-2}\partial_{\theta}^2
-\frac{\frac{1}{2}+a}{2}]\xi \Big\rangle\\
&\qquad - \Big\langle ye^{-\frac{ay^2}{4}},\ \frac{a_{\tau}}{4}y^2\xi \Big\rangle
+ \Big\langle ye^{-\frac{ay^2}{4}}, \Big[\frac{\frac{1}{2}+a}{2+by^2}
-\frac{\frac{1}{2}+a}{2}\Big] \xi \Big\rangle\\
& = - \Big\langle y e^{-\frac{ay^2}{4}},\ \frac{a_{\tau}}{4}y^2\xi \Big\rangle
- \Big(\frac{1}{2}+a\Big)b \Big\langle y e^{-\frac{ay^2}{4}},\ \frac{y^2}{2(2+by^2)}\xi \Big\rangle.
\end{align*}
Hence we conclude that
$$
|A_1|\ls (|a_{\tau}|+b)\|\langle y\rangle^{-3} e^{\frac{ay^2}{4}}\xi\|_{L^\infty}\ls \beta^{\frac{51}{20}}.
$$

The expression for $A_2$ can be simplified by observing that $ye^{-\frac{ay^2}{4}}$ is odd in $y$ and independent of
$\theta$, hence is orthogonal to the functions even in $y$ or $\theta$-dependent. Thus we compute that
\begin{align*}
A_2 &=  \Big\langle ye^{-\frac{ay^2}{4}}, \ F_1(a,b)+F_2(a,b,\beta) \Big\rangle\\
& = \Big\langle ye^{-\frac{ay^2}{4}}, \ e^{-\frac{ay^2}{4}} \Big[\partial_{y}^2-ay\partial_{y} + a
+ V_{a,b}^{-2}-\partial_{\tau}\Big]\beta_0 y\Big\rangle.
\end{align*}
Consequently, we get
$$
A_2
= 2\pi \big[\Omega_1(a,b)\beta_0-\partial_{\tau}\beta_0\big]\int_{-\infty}^{\infty}y^2e^{-\frac{ay^2}{2}}\,\mathrm{d}y,
$$
where $\Omega_{1}(a,b)$ is the constant defined in \eqref{eq:Omega1}.

For $A_3$, we use the definition of $\tilde{\xi}$ in \eqref{eq:defN1} and the assumptions in Condition~[Cb]
that $|\beta_{k}|\ls\beta^2$ for $k=0,1,\dots, 4$ to obtain
\begin{equation}\label{eq:estA3}
\begin{split}
A_3
&\ls  \Big\|\langle y\rangle^{-6} e^{\frac{ay^2}{2}}\tilde{\xi}^2\Big\|_{L^\infty} \\
&\ls  \Big\|\langle y\rangle^{-3} e^{\frac{ay^2}{2}}\xi^2\Big\|_{L^\infty}^2+\sum_{k=0}^4|\beta_{k}|^2 \\
&\ls  \beta^{3}.
\end{split}
\end{equation} 

For $A_4$, direct calculation yields
$$|A_{4}|\ls \|\langle y\rangle^{-1}p\|_{L^\infty}^2 \|\partial_{y}^2 v\|_{L^\infty}
=\|\langle y\rangle^{-1}\partial_{y}v\|_{L^\infty}^2 \|\partial_{y}^2 v\|_{L^\infty}. $$
The decomposition of $v$ implies that
$$
\partial_{y}v=\frac{by}{\sqrt{2-2a}\sqrt{2+by^2}}+\beta_0+\beta_3\cos\theta
+\beta_{4}\sin\theta+ \partial_{y}\Big\{e^{\frac{ay^2}{2}}\xi\Big\}.
$$
Hence we get
\begin{equation}\label{eq:y-1yv}
\big\|\langle y\rangle^{-1}\partial_{y}v\big\|_{L^\infty}
\leq  b+|\beta_0|+|\beta_3|+|\beta_{4}| + \Big\|\langle y\rangle^{-1}\partial_{y}
\Big[e^{\frac{ay^2}{2}}\xi \Big] \Big\|_{L^\infty}
\ls  \beta.
\end{equation}
Recall that $\|\langle y\rangle^{-1}\partial_{y}[e^{\frac{ay^2}{2}}\xi]\|_{L^\infty}=\|e^{\frac{ay^2}{2}}\xi\|_{1,1}$, 
and that by assumption on $M_{1,1} $ we have
$\|e^{\frac{ay^2}{4}}\xi\|_{1,1}\ls \beta^{\frac{21}{20}}$.
This, together with the estimate $|\partial_{y}^2 v|=\cO(\beta^{\frac{13}{20}})$ in ~\eqref{eq:yDe} implies that
\begin{equation*}
|A_{4}| \ls \beta^{\frac{53}{20}}.
\end{equation*}

We use the definition of the quantity $N_{3,2}$ introduced in \eqref{eq:difN3} to 
decompose $A_5$ into four terms,
\begin{equation}\label{eq:A5}
\begin{split}
A_5 
& = \Big\langle ye^{-\frac{ay^2}{4}},\ \Big[v^{-2}-V_{a,b}^{-2}\Big] \partial_{\theta}^2 
\xi e^{-\frac{ay^2}{4}}\Big\rangle\\
&\qquad 
- \Big\langle ye^{-\frac{ay^2}{4}},\ v^{-2}\frac{q^2}{1+p^2+q^2} \partial_{\theta}^2 \xi e^{-\frac{ay^2}{4}}
\Big\rangle\\
&\qquad 
+ \Big\langle ye^{-\frac{ay^2}{4}},\ v^{-1}\frac{2pq}{1+p^2+q^2}\partial_{\theta}
\partial_{y}v e^{-\frac{ay^2}{4}}\Big\rangle\\
&\qquad 
+ \Big\langle ye^{-\frac{ay^2}{4}},\ v^{-2}\frac{q}{1+p^2+q^2}\partial_{\theta}v e^{-\frac{ay^2}{4}}\Big\rangle\\
& = \sum_{\ell=1}^{4}A_{5,\ell},	
\end{split}
\end{equation}
where the various terms $A_{5,\ell}$ $(\ell=1,\dots,4)$ are naturally defined.

For $A_{5,1}$, we integrate by parts in $\theta$ to remove the slowly decaying $\theta$-independent
components in $v^{-2}-V_{a,b}^{-2}$. Thus we compute that
$$A_{5,1}=2\Big\langle ye^{-\frac{ay^2}{2}},\ v^{-3}\partial_{\theta}v \partial_{\theta}\xi \Big\rangle$$
and hence can estimate
\begin{equation*}
|A_{5,1}|\ls \|v^{-2} \partial_{\theta}v\|_{L^\infty}   \Big\|\langle y\rangle^{-3}e^{\frac{ay^2}{4}}
\partial_{\theta} \xi \Big\|_{L^\infty}.
\end{equation*}
We relate $e^{\frac{ay^2}{4}}\partial_{\theta} \xi$ to $\partial_{\theta}v$ using the decomposition
of $v$ in \eqref{eqn:split2}, obtaining
\begin{equation}\label{eq:xieta}
\begin{split}
\langle y\rangle^{-2}e^{\frac{ay^2}{4}} |\partial_{\theta} \xi | 
&\leq  \langle y\rangle^{-2}|\partial_{\theta} v|+\sum_{k=1}^{4}|\beta_k| \\
& \ls   v^{-2}|\partial_{\theta} v|+\sum_{k=1}^{4}|\beta_k| \\
& \ls   \beta^{\frac{33}{20}}.
\end{split}
\end{equation}
Here we used the estimate $v^{-2}|\partial_{\theta}v| \ls \beta^{\frac{33}{20}}$ from \eqref{eq:thetaDe}
along with the fact that
\begin{equation}\label{eq:y-1}
\langle y\rangle^{-1}\ls v^{-1},\qquad\text{equivalently,}\qquad v\ls \langle y\rangle.
\end{equation}
This, in turn, is implied by three facts: the computation that
$$
|v(y,\theta,\tau)|\leq |v(0,\theta,\tau)|+\int_{0}^{y}|\partial_{z}v(z,\theta,\tau)|\,\mathrm{d}z,
$$
the consequence  $v(0,\theta,\tau)\ls 1$ of our input assumption that $M_{3,0}\ls\beta^{-\frac{1}{20}}$,
and the assumption that $|\partial_{y}v|\ls 1$ from \eqref{eq:upperBounddV}.
We collect the estimates above to obtain
\begin{equation*}
|A_{5,1}|\ls \beta^{\frac{33}{10}}.
\end{equation*}

Turning to $\sum_{\ell=2}^{4}A_{5,\ell}$, we observe that each of these terms
contains a rapidly decaying factor $q=v^{-1}\partial_{\theta}v$. We apply ~\eqref{eq:y-1} again to get
\begin{equation*}  
\|\langle y\rangle^{-1}q\|_{L^\infty}\ls \|v^{-2}\partial_{\theta}v\|_{L^\infty} \ls \beta^{\frac{33}{20}}.
\end{equation*}
This, together with the estimates in \eqref{eq:thetaDe}, implies that
\begin{equation*}
\bigg|\sum_{\ell=2}^{4} A_{5,\ell}\bigg|  \ls \beta^{\frac{16}{5}}.
\end{equation*}

Collecting the estimates above completes the proof of estimate~\eqref{eq:26in}.
\end{proof}

\begin{proof}[Proof of estimate~\eqref{eq:estbeta0}]
Let $\tau_{\#}$ denote $\tau(t_{\#})$, and recall the boundary condition
$\beta_{0}(\tau_{\#})=0$ from \eqref{eq:boundary}. We use \eqref{eq:26in} to write
\begin{equation*}
|\beta_0(\tau)|\ls \int_{\tau}^{\tau_{\#}}
e^{-\int_{\kappa}^{\tau_{\#}}\Omega(a,b)(s)\,\mathrm{d}s}\;
\beta^{\frac{13}{5}}(\kappa)\,\mathrm{d}\kappa
\end{equation*}
Then we use the consequence of \eqref{eq:Omega1} that $\Omega(a,b)\geq1/2 $ to conclude that
\begin{equation*}
|\beta_0(\tau)|\ls \beta^{\frac{13}{5}}(\tau).
\end{equation*}
\end{proof}

\subsection{Proofs of estimates~\eqref{eq:27in} and \eqref{eq:beta01234}}

\begin{proof}[Proof of estimate~\eqref{eq:27in}]
As we did in deriving \eqref{eq:beta0}, we take the
inner product of \eqref{eq:xi} with $\cos\theta e^{-\frac{ay^2}{4}}$  to obtain
\begin{equation*}
\Big\langle \cos\theta e^{-\frac{ay^2}{4}},\ \partial_{\tau}\xi \Big\rangle
= \sum_{k=1}^{5}\tilde{A}_{k},
\end{equation*}
where the terms $\tilde{A}_{k}$ are defined like those in \eqref{eq:beta0}, replacing $y$ by $\cos\theta$ where needed.

The estimates for the various terms are very similar to those in the proof of estimate~\eqref{eq:26in}.
The only difference is that the presence of the factor $\cos\theta$ here allows us to integrate by parts in
$\theta$ to remove some slowly-decaying $\theta$-independent parts. In what follows, we estimate 
$\tilde{A}_1$, $\tilde{A}_2$, and $\tilde{A}_4$ in detail to illustrate the main ideas. We omit detailed
proofs of the estimates for the other terms.

For $\tilde{A}_1$, we have
\begin{align*}
\tilde{A}_{1}
& = - \Big\langle \cos\theta\ e^{-\frac{ay^2}{4}}, \ \Big[-\partial_{y}^2+\frac{a^2+a_{\tau}}{4}y^2-\frac{3a}{2}-V_{a,b}^{-2}\partial_{\theta}^2-V_{a,b}^{-2}\Big] \xi \Big\rangle\\
& = - \frac{a_{\tau}}{4} \Big\langle \cos\theta\ y^2 e^{-\frac{ay^2}{4}}, \ \xi \Big\rangle\\
& = \frac{a_{\tau}}{4} \Big\langle \sin\theta\ y^2 e^{-\frac{ay^2}{4}}, \ \partial_{\theta}\xi \Big\rangle.
\end{align*}
In the second step, we used the simple observation
$[V_{a,b}^{-2}\partial_{\theta}^2+V_{a,b}^{-2}]\cos\theta=0$;
and in the last step, we integrated by parts in $\theta$. This calculation directly implies that
\begin{align*}
|\tilde{A}_1|  & \ls |a_{\tau}| \min\Big\{ \Big\|  \langle y\rangle^{-3} e^{\frac{ay^2}{4}}\xi \Big\|_{L^\infty},\;
\Big\|\langle y\rangle^{-5} e^{\frac{ay^2}{4}}\partial_{\theta}\xi \Big\|_{L^\infty}\Big\}\\
& \ls \min\Big\{\beta^{\frac{7}{2}},\;\beta^2  \Big\|  \langle y\rangle^{-5} e^{\frac{ay^2}{4}}
\partial_{\theta}\xi \Big\|_{L^\infty}\Big\}.
\end{align*} 

For $\tilde{A}_2$, one can use symmetries to cancel many terms, as we did in Section~\ref{Cancellations} in the
proof of estimate~\eqref{eq:26in}. Here we get
\begin{align*}
\tilde{A}_2
& = \Big\langle\cos\theta \ e^{-\frac{ay^2}{2}},\ \Big[\partial_{y}^2+V_{a,b}^{-2}\partial_{\theta}^2-ay\partial_{y}+a+V_{a,b}^{-2}-\partial_{\tau}\Big] \beta_1\cos\theta \Big\rangle\\
& = [a\beta_1-\partial_{\tau}\beta_1] \int_0^{2\pi}\int_{-\infty}^{\infty}\cos^2\theta \ e^{-\frac{ay^2}{2}}
\,\mathrm{d}y\,\mathrm{d}\theta.
\end{align*}

For $\tilde{A}_4$, we integrate by parts in $\theta$ to remove the $\theta$-independent components, yielding
$$\tilde{A}_{4} :=  \Big \langle\cos\theta e^{-\frac{ay^2}{4}},\ N_{2} \Big\rangle
= -  \Big\langle\sin\theta e^{-\frac{ay^2}{4}},\ \partial_{\theta}N_2  \Big\rangle.$$
Using the definition of $N_2$, this becomes
\begin{align*}
\tilde{A}_{4} & =-\Big\langle\sin\theta e^{-\frac{ay^2}{4}},\quad e^{-\frac{ay^2}{4}}\frac{2 p\partial_{\theta}p}{1+p^2+q^2}\partial_{y}^2 v-e^{-\frac{ay^2}{4}}\frac{2p^2[p\partial_{\theta}p+q\partial_{\theta}q]}{(1+p^2+q^2)^2}\partial_{y}^2 v\\
& \qquad\qquad\qquad\qquad+e^{-\frac{ay^2}{4}}\frac{p^2}{1+p^2+q^2}\partial_{y}^2\partial_{\theta}v\Big\rangle\\
& = K_1+K_2+K_3,
\end{align*} 
where the terms $K_{\ell}$ $(\ell=1,2,3)$ are naturally defined. 

It is easy to estimate $K_2$ and $K_3$. Indeed, using the estimates in ~\eqref{eq:thetaDe} and the
estimate $\langle y\rangle^{-1}\ls v^{-1}$  from \eqref{eq:y-1}, we get
\begin{equation*}
|K_{\ell}|\ls b^{\frac{18}{5}}
\end{equation*}
for $\ell=2,3$. Estimating $K_1$ is more involved.
We begin by integrating by parts in the $y$ variable, using $2p\partial_{y}^2 v=\partial_{y}(\partial_{y}v)^2$ to obtain
\begin{equation*}
K_1 = \Big\langle \partial_{y} \Big[\sin\theta e^{-\frac{ay^2}{2}} \frac{\partial_{\theta}
\partial_{y}v}{1+p^2+q^2}\Big],\ (\partial_{y}v)^2 \Big\rangle.
\end{equation*}
We bound the term $\langle y\rangle^{-2}(\partial_{y}v)^2$ by $\beta^2$ as in \eqref{eq:y-1yv}.
Then we use \eqref{eq:thetaDe} to see that the terms in
$ \partial_{y}\big\{\sin\theta e^{-\frac{ay^2}{2}} \frac{\partial_{\theta}\partial_{y}v}{1+p^2+q^2}\big\}$ 
are of order $\beta^{\frac{8}{5}}$. Thus we get
\begin{equation*}
|K_1|\ls \beta^{\frac{18}{5}}.
\end{equation*}

We collect the estimates obtained above to conclude that
\begin{equation*}
|\tilde{A}_{4}|\ls \beta^{\frac{18}{5}}.
\end{equation*}

For $\tilde{A}_3$ and $\tilde{A}_5$, we apply similar methods to conclude that
\begin{equation*}
|\tilde{A}_{3}|+|\tilde{A}_5|
\ls \min\Big\{ \beta^{\frac{16}{5}},\;
\beta^{\frac{31}{20}}  \Big\| \langle y\rangle^{-5}e^{\frac{ay^2}{4}}\partial_{\theta}\xi \Big\|_{L^\infty}\Big\}.
\end{equation*}
This completes our proof of estimate~\eqref{eq:27in}.
\end{proof}

\begin{proof}[Proof of estimate~\eqref{eq:beta01234}]
By the boundary condition \eqref{eq:boundary}, we have
$\beta_1(\tau_{\#})=0$, where we again define $\tau_{\#}=\tau(t_{\#})$.
We rewrite estimate~\eqref{eq:27in} in the form
\begin{align*}
|\partial_{\tau}\beta_{1}-a\beta_{1}|\ls \min \Big\{\beta^{\frac{16}{5}},\;(M_4+1)\beta^{\frac{18}{5}}\Big\}.
\end{align*} 
This implies that
\begin{equation*}
|\beta_1(\tau)|\ls \int_{\tau}^{\tau_{\#}} e^{-\int_{\kappa}^{\tau_{\#}}a(s)\,\mathrm{d}s}
\cdot \min \Big\{\beta^{\frac{16}{5}},\;(M_4+1)\beta^{\frac{18}{5}}\Big\}(\kappa)\,\mathrm{d}\kappa.
\end{equation*}
Using the fact that $a\geq1/4$, we thus conclude that
\begin{equation*}
|\beta_{1}(\tau)| \ls \min \Big\{\beta^{\frac{16}{5}}(\tau),\;(M_4(\tau)+1)\beta^{\frac{18}{5}}(\tau)\Big\}.
\end{equation*}
\end{proof}

\section{Proof of Lemma~\ref{LemmaB}}
\label{sec:higherWei}

\begin{proof}[Proof of Lemma~\ref{LemmaB}]
By differentiating equation \eqref{eq:xi}, we find that  $\partial_{\theta}\xi$ evolves by
\begin{equation}\label{eq:thetaXi}
\partial_{\tau}(\dz\xi)=-L_{0}(a)\big[\dz\xi\big]+\sum_{k=1}^{3}D_{k},
\end{equation}
where the terms on the \textsc{rhs} are defined by
\begin{align*}
L_0(a)& := - \dy^2 + \frac{a^2 + \partial_\tau a }4 y^2 -\frac{a}2
- 1 - \frac12 \dz^2,\\
D_{1} & := \left[1-a-\frac{2-2a}{2+by^2}\right]\dz\xi, \\
D_2 & := \partial_{\theta}F(a,b,\beta),\\
D_3 & := \sum_{\ell=1}^{3}\dz N_{\ell}.
\end{align*}
Recall that the nonlinear terms $N_\ell$ appearing above are defined in Section~\ref{SlowlyDecompose}.
\medskip

Our first observations are the following key facts.
\begin{lemma}\label{LM:higherWei}
For all times that  the assumptions in Section~\ref{SBMI} hold, one has the orthogonality condition
\begin{equation}\label{eq:ortho10}
 e^{-\frac{a}4 y^2}\dz\xi\perp \{1,\ y,\  y^2,\cos\theta,\sin\theta,y\cos\theta,y\sin\theta\},
\end{equation}
along with estimates
\begin{equation}\label{eq:higherwei}
\sum_{k=1}^{3}\big\| \langle y\rangle^{-5} e^{\frac{a}{4} y^2} D_{k} \big\|_{L^\infty} \ls \beta^{\frac{13}{5}}.
\end{equation}
\end{lemma}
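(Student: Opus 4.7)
The proof splits into the orthogonality statement \eqref{eq:ortho10} and the pointwise estimate \eqref{eq:higherwei}.

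The orthogonality conditions \eqref{eq:ortho10} are a direct consequence of \eqref{eq:orthoxi} together with integration by parts in $\theta$ and the $2\pi$-periodicity of $\xi$. Against the $\theta$-independent test functions $1,y,y^2$, one simply has $\int_{\bS^1}\dz\xi\,\Dz=0$. Against each of $\cos\theta,\sin\theta,y\cos\theta,y\sin\theta$, integration by parts transfers $\dz$ onto the test function and produces, up to sign, an element that already appears in the orthogonality set of \eqref{eq:orthoxi}, so the inner product vanishes.

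For \eqref{eq:higherwei} the plan is to treat each $D_k$ separately. For $D_1$ I would use the algebraic identity $1-a-\frac{2-2a}{2+by^2}=(1-a)\frac{by^2}{2+by^2}$, and then rewrite $e^{ay^2/4}\dz\xi=\dz\phi=\dz v+\beta_1\sin\theta-\beta_2\cos\theta+\beta_3 y\sin\theta-\beta_4 y\cos\theta$ using the decomposition \eqref{eqn:split2}. The elementary bound $\sup_y\frac{by^2}{(2+by^2)\langle y\rangle^{3}}\ls b$, together with the first-machine estimate $v^{-2}|\dz v|\ls\beta^{33/20}$ (and $v\ls\langle y\rangle$) and the $\beta_k$ bounds from Lemma~\ref{LM:traj}, gives $\ls b\beta^{33/20}\ls\beta^{53/20}$. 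For $D_2=\dz F_2$ I would apply the operator in the definition of $F_2$ to each basis function $y,\cos\theta,\sin\theta,y\cos\theta,y\sin\theta$ and exploit the cancellations $(-ay\dy+a)y=0$ and $(V_{a,b}^{-2}\dz^2+V_{a,b}^{-2})\cos\theta=0$. What remains is a linear combination of $\sin\theta,\cos\theta,y\sin\theta,y\cos\theta$ with coefficients $a\beta_k-\dt\beta_k$ or $\dt\beta_k$, all controlled by $\beta^{16/5}$ via Lemma~\ref{LM:traj}.

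For $D_3=\dz N_1+\dz N_2+\dz N_3$ the key structural point is that each $N_\ell$ is at least quadratic in the perturbation $v-V_{a,b}$ or in the slopes $p,q$. Rewriting $N_1=-v^{-1}V_{a,b}^{-2}e^{-ay^2/4}(v-V_{a,b})^2$ and using $\dz V_{a,b}=0$ yields $e^{ay^2/4}\dz N_1=-V_{a,b}^{-2}\dz v\,(v-V_{a,b})(v+V_{a,b})/v^2$. I would then split into the inner region $\{\beta y^2\leq 20\}$, where $v\sim\sqrt{2}$, $|V_{a,b}^{-2}-1/2|\ls b$, $|\dz v|\ls\beta^{33/20}$, and $|\phi|\ls\langle y\rangle^3\beta^{8/5}$ (from $M_{3,0}\ls 1$); and the outer region $\{\beta y^2\geq 20\}$, where $V_{a,b}^{-2}\ls 1/(by^2)$ and the interpolated bound $|\phi|\ls\langle y\rangle^{11/10}\beta^{13/20}$ from $M_{11/10,0}\ls 1$ is crucial, together with the outer-region consequence $\langle y\rangle^{-2}\ls\beta$. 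Both regions give $\ls\beta^{13/5}$. The terms $\dz N_2$ and $\dz N_3$ (including $N_{3,1}=[V_{a,b}^{-2}-1/2]\dz^2\xi$) are handled by the same region-split, using the first-machine derivative bounds \eqref{eq:yDe}--\eqref{eq:thetaDe} and the fact that $\dz^3\phi$ differs from $\dz^3 v$ only by $\beta_k$-corrections.

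The main obstacle is the outer-region bookkeeping for $\dz N_1$ and $\dz N_{3,1}$, where the prefactor fails to be small (for example $V_{a,b}^{-2}-1/2\to -1/2$ as $|y|\to\infty$): the full $\beta^{13/5}$ decay has to be recovered by simultaneously exploiting the $\langle y\rangle^{-5}$ weight, the outer-region inequality $\langle y\rangle^{2}\gtrsim 1/\beta$, and the sharper $(11/10,0)$ norm control on $\phi$ instead of the $(3,0)$ norm.
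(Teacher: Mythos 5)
Your proposal is correct and follows essentially the same route as the paper: orthogonality by integrating by parts in $\theta$ against \eqref{eq:orthoxi}; $D_1$ via the identity $1-a-\frac{2-2a}{2+by^2}=(1-a)\frac{by^2}{2+by^2}$ and the bound $\langle y\rangle^{-2}|\dz\phi|\ls v^{-2}|\dz v|+\sum_k|\beta_k|\ls\beta^{33/20}$; $D_2$ via the cancellations $(-ay\dy+a)y=0$, $(V_{a,b}^{-2}\dz^2+V_{a,b}^{-2})\cos\theta=0$ and the $\beta_k$ equations of Lemma~\ref{LM:traj}; and $D_3$ term by term using the first-machine outputs, with the prefactors $V_{a,b}^{-2}-\frac12$ and $v^{-2}-V_{a,b}^{-2}$ absorbed by $\langle y\rangle^{-2}\ls\beta$ in the outer region (the paper states this as the single uniform bound $\langle y\rangle^{-2}|V_{a,b}^{-2}-\tfrac12|\ls\beta$ rather than an explicit region split). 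Your treatment is in fact slightly more complete than the paper's, which only details $\dz N_3$ and leaves $\dz N_1,\dz N_2$ implicit.
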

This lemma is proved in Section~\ref{subsec:remainder} below.
\medskip

In what follows, we pursue the strategies outlined in Section~\ref{SlowlyDecompose}.
Our next step is to put \eqref{eq:thetaXi} into a more convenient form by removing the time
dependency of the linear operator $L_0(a)$, which is what makes it difficult to estimate the
propagator. The parametrization that follows is identical to that used in \cite{GKS11}.

Recall that $\tau(t) := \int_{0}^t  \lambda^{-2} (s)\,\mathrm{d}s$ for any $\tau\ge  0$,
and $a(\tau) := - \lambda (t(\tau)) \partial_t  \lambda(t(\tau))$.
Let $t(\tau)$ be the inverse function of $\tau(t)$, and fix a ($\tau$-timescale) constant $T_1>0$.
We approximate $\lambda (t(\tau))$ on the interval  $0\le\tau\le T_1$ by a new gauge
$\lambda_1 (t(\tau))$, chosen so that $\lambda_1 (t(T_1)) = \lambda (t(T_1))$, and
$\alpha := - \lambda_1 (t(\tau)) \partial_t \lambda_1  (t(\tau)) = a(T_1)$ is constant.

Then we introduce a new spatial variable $z(x,t) := \lambda_1^{-1}(t)x$ and a new time variable
$\sigma(t) := \int_{0}^t  \lambda_1^{-2}(s)\,\mathrm{d}s$, together with a new function
$\eta(z,\theta,\sigma)$ defined by
\begin{equation}\label{NewFun}
\lambda_1 (t) e^{\frac{\alpha}4 z^2} \eta (z,\theta,\sigma)
    :=  \lambda (t) e^{\frac{a(\tau)}4 y^2} \xi (y,\theta,\tau)
    \equiv\lambda (t)\phi(y,\theta,\tau).
\end{equation}
One should keep in mind that the variables $z$ and $y$ are related by
$z/y= \lambda(t) / \lambda_1(t)$.
To relate the time scales $\sigma(t) := \int_{0}^t  \lambda_1^{-2} (s)\,\mathrm{d}s$ and
$\tau = \int_{0}^t \lambda^{-2} (s)\,\mathrm{d}s$, we note that one may for all
$t(\tau)<T_1$ regard $\sigma$ as a function of $\tau$ given by
\begin{equation}\label{eq:scaledTime}
\sigma(\tau) := \int_0^{t(\tau)} \lambda_1^{-2} (s)\,\mathrm{d}s.
\end{equation}
Observing that the function $\sigma$ is invertible, we denote its inverse by $\tau(\sigma)$.

Now we derive an equation for $\dz\eta,$ which is related to $\dz\xi$ by ~\eqref{NewFun}.
One has
\begin{equation}\label{eq:eta}
\partial_\sigma\big[\dz\eta(\sigma)\big]
\ =  - L_0(\alpha) \big[\dz\eta (\sigma)\big]+\tilde{D}_1 +\tilde{D}_2+\tilde{D}_3,
\end{equation}
where the operator $L_0(\alpha)$ is linear and autonomous,
$$ L_0(\alpha) :=
- \partial_z^2  + \frac{\alpha^2}4 z^2 - \frac{\alpha}2 - 1 - \frac12 \dz^2.$$
The remaing terms on the \textsc{rhs} are
\begin{subequations}		\label{eq:difTidD2}
\begin{align}
\tilde{D}_1 & :=  \left[- \frac{\lambda^2}{\lambda_1^2}\
\frac{2-2a}{2+b(\tau(\sigma)) y^2}+ 1-\alpha\right] \partial_{\theta}\eta,\\
\tilde{D}_2 & :=   \frac{\lambda_1}{\lambda}  e^{-\frac{\alpha}4 z^2}
e^{\frac{a}4 y^2}D_2,\\
\tilde{D}_3 & :=   \frac{\lambda_1}{\lambda}  e^{-\frac{\alpha}4 z^2}
e^{\frac{a}4 y^2}D_3.
\end{align}
\end{subequations}
We estimate those terms in the following.

\begin{lemma}	\label{TildeLM:traj}
If the assumptions of Section~\ref{SBMI} hold, then there exists a 
constant $C$ such that one has
\begin{equation*}
\frac{\tau}{C}\leq \sigma(\tau)\leq C\tau,
\end{equation*}
\begin{equation}\label{eq:appTra}
|a-\alpha|+\left|\frac{\lambda_1(t)}{\lambda(t)}-1\right|\leq C \beta(\tau(t)),
\end{equation}
and
\begin{equation}\label{eq:compari}
\frac{1}{C}\leq \frac{\langle y\rangle}{\langle z\rangle}\leq C
\end{equation}
in the time interval $\tau\in [0,T_1].$ Moreover,
\begin{equation}\label{eq:D123}
\sum_{k=1}^{3} \Big\| \langle z\rangle^{-5} e^{\frac{\alpha}{4} z^2} \tilde{D}_{k}\Big\|_{L^\infty}
\ls \beta^{\frac{13}{5}}(\tau(\sigma)).
\end{equation}
\end{lemma}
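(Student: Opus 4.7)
The plan is to deduce the four conclusions in order from the single \textsc{ode} bound $|a_\tau|\ls\beta^2$ in \eqref{eq:ataubtau}. First, I would integrate $|a_\tau|\ls\beta^2$ on $[\tau,T_1]$: since $\int_\tau^{T_1}\beta^2(s)\,ds=\beta(\tau)-\beta(T_1)\leq\beta(\tau)$, this immediately yields $|a(\tau)-\alpha|\ls\beta(\tau)$. Next, using that $\lambda^2$ and $\lambda_1^2$ solve the first-order \textsc{ode} $\partial_t(\lambda^2)=-2a$, $\partial_t(\lambda_1^2)=-2\alpha$ with matching data at $t=t(T_1)$, one has
\[
\lambda^2(t)-\lambda_1^2(t)=2\int_t^{t(T_1)}\big(a(s)-\alpha\big)\,ds.
\]
I would then change variables by $ds=\lambda^2(\tau')\,d\tau'$, combine $|a(\tau')-\alpha|\ls\beta(\tau')\leq\beta(\tau)$ with the exponential decay $\lambda^2(\tau')\ls\lambda^2(\tau)e^{-(\tau'-\tau)/4}$ (which holds since $a\geq 1/4$ by \eqref{eq:abab}), and bound the integral by $\beta(\tau)\lambda^2(\tau)$. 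This gives $|\lambda_1^2/\lambda^2-1|\ls\beta(\tau)$, hence $|\lambda_1/\lambda-1|\ls\beta(\tau)$ since $\lambda_1/\lambda$ is bounded away from $0$.

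The two comparability estimates then follow immediately: $d\sigma/d\tau=\lambda^2/\lambda_1^2=1+\cO(\beta)$ integrates to $\tau/C\leq\sigma\leq C\tau$, while $z/y=\lambda/\lambda_1=1+\cO(\beta_0)$ gives $\langle y\rangle\es\langle z\rangle$. For \eqref{eq:D123}, the $\tilde D_2$ and $\tilde D_3$ bounds follow at once from \eqref{eq:difTidD2}, the comparability just established, and \eqref{eq:higherwei}. For $\tilde D_1$, the plan is to rewrite
\[
-\frac{\lambda^2}{\lambda_1^2}\frac{2-2a}{2+by^2}+1-\alpha
=\left[1-a-\frac{2-2a}{2+by^2}\right]+(a-\alpha)+\Bigl(1-\frac{\lambda^2}{\lambda_1^2}\Bigr)\frac{2-2a}{2+by^2},
\]
so that the first bracket reproduces the coefficient of $D_1$; that piece of $\tilde D_1$ becomes $(\lambda/\lambda_1)e^{-\alpha z^2/4}e^{ay^2/4}D_1$ and is controlled directly by \eqref{eq:higherwei}. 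The two remaining ``defect'' pieces carry $\cO(\beta)$ prefactors from the previous steps; multiplying by the pointwise bound $\|\langle y\rangle^{-2}e^{ay^2/4}\dz\xi\|_{L^\infty}\ls\beta^{33/20}$, which follows from $v^{-2}|\dz v|\ls\beta^{33/20}$ in Section~\ref{FirstOutput} together with the decomposition \eqref{eqn:split2} and the $\beta_k$ estimates of Lemma~\ref{LM:traj}, they contribute $\cO(\beta^{53/20})\leq\cO(\beta^{13/5})$.

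The hard part will be the $\tilde D_1$ estimate: hypothesis \eqref{eq:higherwei} only controls $D_1=[1-a-(2-2a)/(2+by^2)]\dz\xi$, a product whose coefficient vanishes at $y=0$, so it provides no direct bound on $\dz\xi$ in isolation. The defect terms in the rearrangement therefore require an independent pointwise estimate for $\dz\xi$ itself, and since $\dz\xi$ is not among the entries of the majorant $M$ in Condition~[Cb], this bound must be extracted from the first-bootstrap output on $v^{-2}|\dz v|$, with careful accounting of the $\beta_k$ contributions produced by \eqref{eqn:split2}. Once that pointwise estimate is secured, the rest of the argument reduces to bookkeeping.
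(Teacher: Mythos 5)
Your proposal is correct and follows essentially the same route as the paper: the paper also handles \eqref{eq:D123} by splitting $\tilde D_1$ into pieces each carrying an $\cO(\beta)$ coefficient (from $|a-\alpha|$, $|1-\lambda^2/\lambda_1^2|$, and $\langle y\rangle^{-2}by^2\ls\beta$) and then invoking exactly the pointwise bound $\|\langle y\rangle^{-2}e^{ay^2/4}\dz\xi\|_{L^\infty}\ls\beta^{33/20}$ (the paper's \eqref{eq:xieta}, derived from $v^{-2}|\dz v|\ls\beta^{33/20}$ and the $\beta_k$ bounds), while $\tilde D_2,\tilde D_3$ are reduced to \eqref{eq:higherwei} via \eqref{eq:appTra}--\eqref{eq:compari}. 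The only difference is cosmetic: you rearrange so that one piece literally reproduces $D_1$, and you supply proofs of the first three estimates, which the paper simply imports from \cite{GKS11}.
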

This lemma is proved in Section~\ref{TildeProofs}.
\medskip

Now we apply Duhamel's principle to the evolution equation~\eqref{eq:eta}, obtaining
\begin{equation*}
\dz\big[\eta(\sigma(T_1))\big]=e^{-\sigma(T_1) L_0(\alpha)} \partial_{\theta}\eta_0
+\int_0^{\sigma(T_1)} e^{-(\sigma(T_1)-\sigma_1)L_0(\alpha)}
\sum_{k=1}^{3}\tilde{D}_{k}(\sigma_1)\,\mathrm{d}\sigma_1.
\end{equation*}
The definition of $\eta$ implies that at time $t=T_1$, one has $y=z$, $\alpha=a(\tau(T_1))$ and
$\eta=\xi$. Together with the orthogonality conditions for $\dz\xi$ in \eqref{eq:ortho10}, this implies that
\begin{equation*}
\begin{split}
\dz\eta(\sigma(T_1))&=\bP_{7}\big[\partial_{\theta}\eta(\sigma(T_1))\big]\\
& =\bP_7 \big[e^{-\sigma L(\alpha)} \partial_{\theta}\eta_0\big]
+\int_0^{\sigma(T_1)} e^{-(\sigma(T_1)-\sigma_1)L_0(\alpha)}
\bP_7 \left\{\sum_{k=1}^{3}\tilde{D}_{k}(\sigma_1)\right\}\mathrm d\sigma_1
\end{split}
\end{equation*}
where $\bP_7$ denotes orthogonal projection onto the subspace orthogonal to the seven
functions in \eqref{eq:ortho10}.

To prove that $\eta$ is sufficiently small, we use the following propagator estimates.

\begin{lemma}		\label{PRO:propagator}
Let the assumptions of Section~\ref{SBMI} hold, Then for any time $\sigma\geq 0$, one has
\begin{equation*}
\Big\| \langle z\rangle^{-5}e^{\frac{\alpha z^2}{4}}e^{-\sigma L(\alpha)}\bP_{7}[\eta]\Big\|_{L^\infty}
\ls e^{-\alpha \sigma} \Big\| \langle z\rangle^{-5}e^{\frac{\alpha z^2}{4}}\eta \Big\|_{L^\infty}.
\end{equation*}
\end{lemma}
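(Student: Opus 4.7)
My plan is to reduce the estimate to a pointwise bound on the Mehler kernel of the Ornstein--Uhlenbeck semigroup via a gauge transformation, and then extract the decay from the spectral gap opened by $\bP_7$. Conjugating $L_0(\alpha)$ by the Gaussian ground state $U:=e^{-\alpha z^2/4}$ gives
$$
U^{-1}\, L_0(\alpha)\, U = -\partial_z^2 + \alpha z\,\partial_z - 1 - \tfrac12\dz^2 =: \mathcal M(\alpha) - 1 - \tfrac12\dz^2,
$$
where $\mathcal M(\alpha) := -\partial_z^2 + \alpha z\,\partial_z$ is the Ornstein--Uhlenbeck generator, self-adjoint on $L^2(\bR;\, e^{-\alpha z^2/2}\,\rd z)$ with simple spectrum $\{n\alpha\}_{n\ge 0}$ and Hermite-polynomial eigenfunctions. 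Substituting $\psi := e^{\alpha z^2/4}\eta$ reduces the target estimate to
$$
\|\langle z\rangle^{-5}\, e^{-\sigma(\mathcal M(\alpha) - 1 - \frac12\dz^2)}\,\widetilde\bP_7\,\psi\|_{L^\infty} \ls e^{-\alpha\sigma}\,\|\langle z\rangle^{-5}\psi\|_{L^\infty},
$$
where $\widetilde\bP_7$ is the corresponding Gaussian-$L^2$ projection off $\{1,\, z,\, z^2 - \alpha^{-1},\, \cos\theta,\, \sin\theta,\, z\cos\theta,\, z\sin\theta\}$.

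Next, I would Fourier-decompose in $\theta$ and invoke the explicit Mehler kernel
$$
K_\sigma(z,w) = \sqrt{\tfrac{\alpha}{2\pi(1-e^{-2\alpha\sigma})}}\,\exp\!\bigl(-\tfrac{\alpha(w - e^{-\alpha\sigma}z)^2}{2(1-e^{-2\alpha\sigma})}\bigr)
$$
for $e^{-\sigma\mathcal M(\alpha)}$ in each Fourier sector. Counting the eigenvalues $n\alpha - 1 + \tfrac{k^2}{2}$ of $L_0(\alpha)$ shows that the seven functions removed by $\widetilde\bP_7$ are precisely those eigenfunctions with eigenvalue less than $\alpha$: the Hermite modes $h_0,h_1,h_2$ in the $k=0$ sector and $h_0,h_1$ in each $k=\pm1$ sector, using Condition~[Ca] that $\alpha \approx 1/2$. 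The spectrum on the orthogonal complement therefore begins at $\min\{3\alpha-1,\, 2\alpha-\tfrac12,\, 1\}$, which equals $\alpha + O(\beta)$ and supplies the decay rate $e^{-\alpha\sigma}$, the $O(\beta)$ discrepancy being absorbable into the implicit constant over the relevant time scale.

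The main obstacle is converting this spectral-gap statement into the \emph{weighted $L^\infty$} bound claimed. A direct kernel bound on $K_\sigma \ast \widetilde\bP_7\psi$ yields only the weight $\langle e^{-\alpha\sigma}z\rangle^5 \ls \langle z\rangle^5$ without any decay; the decay must come from the cancellation of the leading Hermite content of the kernel against $\widetilde\bP_7$. To do this, I would expand $K_\sigma(z,w)$ as a Hermite series in $w$, observe that Gaussian-$L^2$ orthogonality kills the contributions from $n=0,1,2$ (resp.\ $n=0,1$) in the $k=0$ (resp.\ $k=\pm1$) Fourier sectors, and estimate the remaining tail pointwise. The Taylor remainder of the contracted Mehler kernel in the variable $e^{-\alpha\sigma}z$ then supplies the $e^{-\alpha\sigma}$ factor needed to complete the proof. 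This is a mild extension of the propagator estimate implemented in \cite{GS09,GKS11}, where the discrete symmetries imposed there reduced the projection to only five $\theta$-independent modes; adjoining $\cos\theta$, $\sin\theta$, $z\cos\theta$, and $z\sin\theta$ leaves the underlying kernel computation unchanged.
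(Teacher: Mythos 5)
Your proposal is correct and follows the same architecture as the paper's proof: Fourier decomposition in $\theta$, sector-wise projections ($\bP_3$ on the $k=0$ mode, $\bP_2$ on $k=\pm1$, nothing on $|k|\geq2$ where the shift $k^2/2$ already supplies the gap), the explicit Mehler kernel, and the observation that the seven projected-out functions are exactly the eigenmodes with eigenvalue below $\alpha$. The only point of divergence is the device used to convert the vanishing of the low Hermite content into a pointwise weighted bound. The paper (following \cite{DGSW08} and \cite{BrKu94}) integrates by parts three times in $y$ against the iterated antiderivatives $f^{(-1)},f^{(-2)},f^{(-3)}$, whose good behavior at $\pm\infty$ is exactly the content of the three moment conditions $f\perp\{1,y,y^2\}$; each $\partial_y$ of the kernel contributes a factor $e^{-\alpha\sigma}$, yielding $e^{-3\alpha\sigma}(1-e^{-2\alpha\sigma})^{-3}$, with small $\sigma$ handled by the unprojected bound. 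Your Taylor-remainder formulation of the contracted kernel in the variable $e^{-\alpha\sigma}y$ is the dual implementation of the same cancellation and works equally well; the raw Hermite-series version you mention first is more delicate to sum pointwise in the weighted $L^\infty$ norm and near $\sigma=0$, which is precisely why the integration-by-parts/Taylor form is preferable. One caveat: your spectral accounting uses $\alpha\approx\tfrac12$ and leaves a residual $e^{(1-2\alpha)\sigma}$; this is indeed harmless, but because $1-2\alpha=\cO(b(T_1))=\cO(\beta(T_1))$ and $\beta(T_1)\sigma(T_1)\ls1$ on the relevant time interval, not merely because the discrepancy is ``small'' --- this justification should be made explicit, as a uniform $\cO(\kappa_0^{-1})$ bound on $|2\alpha-1|$ alone would not suffice for unbounded $\sigma$.
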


This lemma is proved in Section~\ref{subsec:propa} below.
\medskip

Now we use estimate~\eqref{eq:D123} to obtain
\begin{align*}
\Big\|\langle z\rangle^{-5} e^{\frac{\alpha z^2}{4}}\dz\eta(\sigma(T_1)) \Big\|_{L^\infty}
&\ls e^{-\alpha \sigma} \Big\|\langle z\rangle^{-5} e^{\frac{\alpha z^2}{4}}\partial_{\theta}\eta(0)\Big\|_{L^\infty}\\
&\qquad + \int_0^{\sigma(T_1)} e^{-\alpha \sigma} \Big\|\langle z\rangle^{-5} e^{\frac{\alpha z^2}{4}}
\sum_{k=1}^{3}\tilde{D}_k \Big\|_{L^\infty}\,\mathrm d\sigma\\
&\ls e^{-\alpha \sigma} \beta^{\frac{21}{10}}(0)+\int_0^{\sigma(T_1)}
e^{-\alpha \sigma}\beta^{\frac{13}{5}}(\tau(\sigma))\,\mathrm d\sigma\\
&\ls \beta^{\frac{21}{10}}(\sigma(T_1)).
\end{align*}
Note that here we used the new condition
$ \|\langle x\rangle^{-5} \partial_{\theta}u_0\|_{L^\infty}\ls b^{\frac{21}{10}}(0)$
implied by Assumption~[A1]  to get
$ \|\langle y\rangle^{-5} e^{\frac{a(0) y^2}{4}}\dz\xi(0)\|_{L^\infty}\ls \beta^{\frac{21}{10}}(0)$.
Finally, we complete the proof by observing that
$$\langle z\rangle^{-5} e^{\frac{\alpha z^2}{4}}\partial_{\theta}\eta(\sigma(T_1))
=\langle y\rangle^{-5} e^{\frac{ay^2}{4}}\partial_{\theta}\xi(T_1).$$
The desired estimate follows, and the proof of Lemma~\ref{LemmaB} is complete.
\end{proof}

\subsection{Proof of Lemma~\ref{LM:higherWei}}	\label{subsec:remainder}
\begin{proof} [Proof of Lemma~\ref{LM:higherWei}]
We start by proving the orthogonality condition. By~\eqref{eq:orthoxi},
we see that  $\xi$ is orthogonal to seven functions. By a simple integration by parts in $\theta$,
we find that $\dz\xi$ is also orthogonal to these same functions, which proves \eqref{eq:ortho10}.

Next we turn to \eqref{eq:higherwei}. Three observations make it relatively straightforward
to estimate the various terms there: \textsc{(i)} Most of the terms are of higher orders in $\xi$,
$\partial_{y}v$, $\dz v$, and $\dy\dz v,$ which (appropriately weighted) are small in $L^{\infty}$.
\textsc{(ii)} The presence of the operator $\dz$ removes the slowly-changing $\theta$-independent
terms. And \textsc{(iii)}, the high weight $\langle y\rangle^{-5}$ here allows us to be generous when
doing estimates.

We begin with the term $D_2=\partial_{\theta}F=\partial_{\theta}[F_1(a,b)+F_2(a,b,\beta)]$.
The fact that $F_1$ is independent of $\theta$ makes $\partial_{\theta}F=\partial_{\theta}F_2$.
Moreover the $\beta_0$-component, which is also $\theta$-independent, will vanish after
applying $\partial_{\theta}.$
Together with the estimates
$\partial_{\tau}\beta_{k}-a\beta_k=\mathcal{O}(\beta^{\frac{16}{5}})$, $k=1,2$,
and $\partial_{\tau}\beta_{\ell}=\mathcal{O}(\beta^{\frac{16}{5}})$, $\ell=3,4$,
from~\eqref{eq:27in} of Lemma~\ref{LM:traj}, this implies that
\begin{equation*}
\Big\|\langle y\rangle^{-5} e^{\frac{ay^2}{4}} D_2 \Big\|_{L^\infty} \ls \beta^3.
\end{equation*}

For $D_1$, we relate $e^{\frac{ay^2}{4}}\xi$ to $v$ using \eqref{eqn:split2}, and we bound
$\langle y\rangle^{-2}\left(\frac{\frac{1}{2}+a}{2}-\frac{\frac{1}{2}+a}{2+by^2}\right)$ by $\beta$, obtaining
\begin{equation*}
\Big\|\langle y\rangle^{-5}e^{\frac{ay^2}{4}} D_{1} \Big\|_{L^\infty}\ls
\beta \bigg\{\|v^{-2}\partial_{\theta}v\|_{L^\infty}+\sum_{k=1}^{4}|\beta_{k}|\bigg\}
\ls  \beta^{\frac{5}{2}}.
\end{equation*}
Here, we used the estimate $\langle y\rangle^{-1}\ls v^{-1} $ from \eqref{eq:y-1} and
an assumption on $v^{-2}|\partial_{\theta}v|$ from the outputs of the first bootstrap
machine in Section~\ref{FirstOutput}.

We decompose $D_5$ as
\begin{align*}
\Big|\langle y\rangle^{-5}\partial_{\theta}N_3 e^{\frac{ay^2}{4}}\Big|
&\ls \langle y\rangle^{-2} \Big|V_{a,b}^{-2}-\frac{1}{2}\Big| 
\bigg[v^{-3}|\partial_{\theta}^3v|+\sum_{k=1}^{k}|\beta_{k}|\bigg]\\
 &\quad+\langle y\rangle^{-3} |v^{-2}-V_{a,b}^{-2}|  
 \bigg[v^{-2}|\partial_{\theta}^3v|+\sum_{k=1}^{k}|\beta_{k}|\bigg]\\
 &\quad+\langle y\rangle^{-3} \Big|\partial_{\theta} \Big[v^{-2}\frac{1+p^2}{1+p^2+q^2}\Big]\Big| \ v^{-2}|\partial_{\theta}^2 v|\\
 &\quad+\langle y\rangle^{-5} \Big|\partial_{\theta} \Big[v^{-1}\frac{2pq}{1+p^2+q^2}\partial_{\theta}\partial_{y}v\Big] \Big|\\
 &\quad+\langle y\rangle^{-5} \Big|\partial_{\theta} \Big[v^{-2}\frac{q}{1+p^2+q^2}\partial_{\theta}v\Big] \Big|\\
 &= \sum_{l=1}^{5} K_{\ell},
\end{align*}
with $K_{\ell}$ $(\ell=1,\dots,5)$ naturally defined.

For $K_1$, we use the assumption $A\ls\beta^{-\frac{1}{20}}$ for the quantity
$A$ defined in ~\eqref{eq:majorA} to estimate the factor
\begin{equation*}
\langle y\rangle^{-2} \Big|V_{a,b}^{-2}-\frac{1}{2}\Big| 
\ls \langle y\rangle^{-2}\Big( |\frac{1}{2}-a|+by^2\Big)  \ls \beta.
\end{equation*}
By \eqref{eq:thetaDe}, the quantity $v^{-2}|\dz^3 v|$ is bounded by $\beta^{\frac{33}{20}}$.
Combining these estimates with the consequence $|\beta_{k}|\ls \beta^2$ of Lemma~\ref{LM:traj}
implies that
\begin{equation*}   
|K_{1}|\ls \beta^{\frac{13}{5}}.
\end{equation*}

For $K_2$, we use ideas similar to those we used to control $K_1$ to obtain
\begin{equation*}
\Big\|\langle y\rangle^{-5}e^{\frac{ay^2}{4}}K_2\Big\|_{L^\infty} \ls \beta^{\frac{16}{5}}.
\end{equation*}

The presence of factors $q=v^{-1}\partial_{\theta}v$ and $\partial_{\theta}\partial_{y}v$ 
makes it relatively easy to control the terms in $K_3$ and $K_4$. Indeed, one may apply
the estimates in \eqref{eq:thetaDe} to obtain
\begin{equation*}
\Big\|\langle y\rangle^{-5} e^{\frac{ay^2}{4}}[K_3+K_4]\Big\|_{L^\infty} \ls \beta^3.
\end{equation*}

The claimed estimate for $D_5$ follows when one combines the estimates above.
\end{proof}

\subsection{Proof of Lemma~\ref{TildeLM:traj}}	\label{TildeProofs}
\begin{proof}[Proof of Lemma~\ref{TildeLM:traj}]
Because the first three estimates are obtained exactly as in the corresponding part of ~\cite{GKS11},
we do not repeat those arguments here.

For ~\eqref{eq:D123}, we begin by decomposing $\tilde{D}_1$ into three terms,
\begin{equation*}
\tilde{D}_1 = \Big[1-\frac{\lambda^2}{\lambda_1^2}\Big] \frac{2-2a}{2+b(\tau(\sigma))y^2} 
\partial_{\theta}\eta+\frac{2a-2\alpha}{2+b(\tau(\sigma))y^2}\partial_{\theta}\eta
+ \Big(2-2\alpha\Big)
\frac{b(\tau(\sigma))y^2}{2(2+b(\tau(\sigma))y^2)}\partial_{\theta}\eta.
\end{equation*}
The factor $by^2$ in the last term on the \textsc{rhs} is controlled by $\langle y\rangle^{-2}by^2\leq b\lesssim \beta$,
which together with the estimates for $a-\alpha$ and $\lambda/\lambda_1-1$ in \eqref{eq:appTra}
implies that
\begin{equation*}
\Big\|\langle y\rangle^{-5} e^{\frac{\alpha z^2}{4}}\tilde{D}_1\Big\|_{L^\infty}
\ls  b \Big\|\langle z\rangle^{-3} e^{\frac{\alpha}{4}z^2}\partial_{\theta}\eta(\sigma)\Big\|_{L^\infty}
\ls  \beta \Big\|\langle y\rangle^{-3} e^{\frac{a}{4}y^2}\partial_{\theta}\xi(\tau(\sigma))\Big\|_{L^\infty}.
\end{equation*}
Note that in the last step, we used \eqref{eq:compari} and the definition of $\eta.$ Using~\eqref{eq:xieta},
which relates $\partial_{\theta}\xi$ to $\partial_{\theta}v$, we get
\begin{equation*}
\Big\|\langle y\rangle^{-5} e^{\frac{\alpha z^2}{4}}\tilde{D}_1\Big\|_{L^\infty}\ls \beta^{\frac{51}{20}}.
\end{equation*}

We relate the quantities $\tilde{D}_2$ and $\tilde{D}_3$ to $D_2$ and $D_3$, respectively,
using \eqref{eq:appTra}. The we apply estimate~\eqref{eq:higherwei} to obtain
\begin{equation*}
\begin{split}
\Big\|\langle z\rangle^{-5} e^{\frac{\alpha}{4}z^2}\tilde{D}_{k}(\sigma)\Big\|_{L^\infty}
& \ls \Big\|\langle y\rangle^{-5} e^{\frac{a}{4}y^2}D_{k}(\tau(\sigma))\Big\|_{L^\infty} \\
& \ls \beta^{\frac{13}{5}}(\tau(\sigma)). 
\end{split}
\end{equation*}

The proof is complete.
\end{proof}

\subsection{Proof of Lemma~\ref{PRO:propagator} }		\label{subsec:propa}
\begin{proof}[Proof of Lemma~\ref{PRO:propagator} ]
In our earlier work \cite{GKS11}, we proved a similar result but with the weight $\langle z\rangle^{-3}$.
To prove an estimate with the weight $\langle z\rangle^{-5}$, we slightly generalize a result proved in
\cite{DGSW08}, which in turn is motivated by a result from \cite{BrKu94}.

In \cite{DGSW08}, the functions are $\theta$-independent. So before adapting the arguments there,
we decompose $\eta$ as
\begin{equation*}
\eta(z,\theta)=\sum_{k=-\infty}^{\infty} e^{ik\theta} f_{k}(z),
\end{equation*}
with $f_{k}(z):=\frac{1}{2\pi} \int_{\bS^1} e^{-ik\theta} \eta(z,\theta)\,\mathrm d\theta.$
The explicit form of $\bP_7$ allows us to write 
\begin{align}\label{eq:decomP7}
\bP_7[\eta]=\bP_3[f_0](z)+ e^{i\theta}\bP_2[f_{1}](z)
+e^{-i\theta}\bP_2[f_{-1}](z)+\sum_{|k|>2} e^{ik\theta} f_{k}(z)
\end{align}
and 
\begin{align*}
e^{-\sigma L(\alpha)}\bP_7[\eta]&=
e^{-\sigma L_{0}(\alpha)}\bP_3[f_0](z)
+ e^{i\theta}e^{-\sigma [L_{0}(\alpha)+\frac{1}{2}]}\bP_2[f_{1}](z)\\
&\quad+e^{-i\theta}e^{-\sigma [L_{0}(\alpha)+\frac{1}{2}]}\bP_2[f_{-1}](z)
+\sum_{|k|>2} e^{ik\theta}e^{-\sigma [L_{0}(\alpha)+\frac{k^2}{2}]} f_{k}(z).
\end{align*}
Here, $\bP_3$ denotes orthogonal projection onto the subspace orthogonal to $e^{-\frac{\alpha z^2}{4}}$,
$ze^{-\frac{\alpha z^2}{4}}$, and $(\alpha z^2-1)e^{-\frac{\alpha z^2}{4}}$, while $\bP_2$ denotes orthogonal
projection onto the subspace orthogonal to $e^{-\frac{\alpha z^2}{4}}$ and $ze^{-\frac{\alpha z^2}{4}}$.

In what follows, we focus on the first term on the \textsc{rhs} above; the remaining terms are estimated
similarly. To generalize Lemma~17 from~\cite{DGSW08}, we note that in the present situation, one has
$\beta=0$ and $V=0$, which will make our analysis easier.

Following~\cite{DGSW08}, we derive an integral kernel for $e^{-\sigma L_{0}(\alpha)}$ such that
\begin{align}\label{eq:kernel10}
e^{-\sigma L_{0}(\alpha)}\bP_3[f_0](z)=e^{\frac{\alpha}{4}|z|^2}\int_{-\infty}^{\infty} U_{0}(z,y) f(y)\,\mathrm{d}y,
\end{align}
where
$$f(y):=e^{-\frac{\alpha}{4}|y|^2}\bP_3[f_0](y),$$ 
and the integral kernel $U_0$ is
$$U_{0}(z,y):=4\pi (1-e^{-2\alpha \sigma})^{-\frac{1}{2}}
\sqrt{\alpha}\;e^{2\alpha\sigma} e^{-\frac{\alpha(z-e^{-\alpha \sigma}y)^2}{2(1-e^{-2\alpha \sigma})}}.$$

To obtain the decay estimate, we integrate by parts in $y$ three times, exploiting the fact
that $\partial_{y}U_{0}(z,y)$ has a factor $e^{-\alpha \sigma}$. To prepare for this, namely to ensure 
that the functions we consider are in appropriate spaces, we use the fact that
$\bP_3[f_0]\perp\{e^{-\frac{\alpha}{4}|z|^2}, \,ze^{-\frac{\alpha}{4}|z|^2},\,z^2 e^{-\frac{\alpha}{4}|z|^2}\}$ 
to see that $f\perp \{1,\ z,\ z^2\}$, hence that
\begin{equation}\label{eq:threeInParts}
\begin{split}
\int_{-\infty}^{\infty} f(y)\,\mathrm dy&=0, \\
\int_{-\infty}^{\infty}\int_{-\infty}^{y} f(y_1)\,\mathrm dy_1\,\mathrm dy
	&=\int_{-\infty}^{\infty} y f(y)\,\mathrm dy=0, \\
\int_{-\infty}^{\infty}\int_{-\infty}^{y}\int_{-\infty}^{y_1} f(y_2)\,\mathrm dy_2
	\,\mathrm dy_1\,\mathrm dy&=\int_{-\infty}^{\infty} y^2 f(y)\,\mathrm dy=0.
\end{split}
\end{equation}
Furthermore, there exists a constant $C$, independent of $y$, such that
\begin{equation}\label{eq:f-3}
|f^{(-3)}(y)|\leq C e^{-\frac{\alpha y^2}{2}}(1+|y|^2)\|\langle z\rangle^{-5} e^{\frac{\alpha z^2}{4}}f_0\|_{L^\infty},
\end{equation}
where
$$f^{(-3)}(y):=\int_{-\infty}^{y}\int_{-\infty}^{y_1}\int_{-\infty}^{y_2} f(y_3)\,\mathrm dy_3
	\,\mathrm dy_2\,\mathrm dy_1
=-\int^{\infty}_{y}\int^{\infty}_{y_1}\int^{\infty}_{y_2} f(y_3)
	\,\mathrm dy_3\,\mathrm dy_2\,\mathrm dy_1.
$$
To obtain estimate~\eqref{eq:f-3}, we repeatedly used the fact that the inequality
$$\int_{y}^{\infty} (1+z)^{m}e^{-\frac{\alpha z^2}{2}}\,\mathrm dz\ls
(1+y)^{m-1}e^{-\frac{\alpha y^2}{2}}
$$
holds for $y\geq 0$ and $m\in \mathbb{R}$.

The work that follows is different from \cite{DGSW08}.
Returning to ~\eqref{eq:kernel10}, we integrate by parts three times in $y$ to find that
\begin{equation*}
e^{-\sigma L_{0}(\alpha)}\bP[f_0](z)=-e^{\frac{\alpha z^2}{4}}
	 \int \partial_{y}^3 U_{0}(z,y) f^{(-3)}(y)\,\mathrm dy.
\end{equation*}
Then we calculate directly and apply estimate~\eqref{eq:f-3} to obtain
\begin{equation}\label{eq:lastTep}
\begin{split}
\Big| \langle z\rangle^{-5} &e^{\frac{\alpha z^2}{4}}e^{-\sigma L_{0}(\alpha)}\bP_3[f_0](z)\Big| \\
&\ls \frac{ e^{-3\alpha\sigma} }{(1-e^{-2\alpha \sigma})^3}e^{\frac{\alpha z^2}{2}}\langle z\rangle^{-5} \int \big(|z|+|y|+1\big)^{3}\,
	U_{0}(z,y) \big| f^{(-3)}(y)\big|\,\mathrm dy \\ 
&\ls\frac{ e^{-3\alpha\sigma} }{(1-e^{-2\alpha \sigma})^3}\langle z\rangle^{-5} e^{\frac{\alpha z^2}{2}}
\Big\|\langle z\rangle^{-5} e^{\frac{\alpha z^2}{4}}f_0\Big\|_{L^\infty}
\int \big(|z|+|y|+1\big)^{3}\,U_{0}(z,y) e^{-\frac{\alpha y^2}{2}}\big(1+|y|^2\big)\,\mathrm dy \\
&\ls \frac{ e^{-3\alpha\sigma} }{(1-e^{-2\alpha \sigma})^3} 
	\Big\|\langle z\rangle^{-5} e^{\frac{\alpha z^2}{4}}f_0\Big\|_{L^\infty}
	\sum_{k=1,2,3,4,5} \langle z\rangle^{-k} e^{\frac{\alpha z^2}{2}}
	\int U_{0}(z,y) e^{-\frac{\alpha y^2}{2}} \big(1+|y|\big)^{k}\,\mathrm dy.	
\end{split}
\end{equation}
Here in the second step, we use the simple observation
\begin{align*}
\langle z\rangle^{-5} (|z|+|y|+1)^{3} (1+|y|^2)\lesssim \sum_{k=1,2,3,4,5}\langle z\rangle^{-k} (1+|y|)^k
\end{align*}
to show that
\begin{align}\label{eq:k12345}
\sum_{k=1,2,3,4,5} \langle z\rangle^{-k} e^{\frac{\alpha z^2}{2}}\int U_{0}(z,y) 
e^{-\frac{\alpha y^2}{2}}\big(1+|y|\big)^{k}
	\,\mathrm dy \ls e^{2\alpha \sigma},
\end{align}
Then we apply the same arguments as in the proof of Lemma~16 of \cite{DGSW08}, where the cases 
$k=1,\dots, 4$ were verified. Because this adaptation is straightforward and simple, it does not need
to be detailed here.

Now for $\sigma\geq1$, estimates~\eqref{eq:lastTep} and \eqref{eq:k12345} together imply the
desired decay estimate. On the other hand, for small $\sigma$, we apply \eqref{eq:k12345} directly
to \eqref{eq:kernel10} to obtain a uniform bound. This completes the proof.
\end{proof}

\section{Proof of Lemma~\ref{LemmaC}}	\label{sec:proof723}

\begin{proof}[Proof of Lemma~\ref{LemmaC}]
We first derive evolution equations for $\xi_\pm$ and
$(e^{-\frac{ay^2}{4}}\partial_{y}e^{\frac{ay^2}{4}}\xi)_\pm$.
To simplify notation, we introduce new functions $\psi_{\pm}$ defined by
\begin{equation*}
\psi_{\pm}:=e^{-\frac{ay^2}{4}}\partial_{y} \Big(e^{\frac{ay^2}{4}}\xi_{\pm}\Big).
\end{equation*}
Next we make the following observations.

\begin{lemma}\label{LM:vpm}
The functions $\xi_{\pm}$ and  $\psi_{\pm}$ satisfy the orthogonality conditions
\begin{align}\label{eq:xipmOrtho}
\xi_{\pm}\perp\Big\{e^{-\frac{ay^2}{4}},\ ye^{-\frac{ay^2}{4}}\Big\}
\qquad\text{and}\qquad
\psi_{\pm}\perp\Big\{e^{-\frac{ay^2}{4}}\Big\},
\end{align}
and evolve by
\begin{equation}\label{eq:eqPhiPm}
\begin{split}
\partial_{\tau}\xi_{\pm} = & -L_{0}(a)\xi_{\pm}+e^{-\frac{ay^2}{4}}[G_1+G_2],\\
\partial_{\tau}\psi_{\pm} = & -[L_{0}(a)+a]\psi_{\pm}+e^{-\frac{ay^2}{4}}[G_3+G_4],
\end{split}
\end{equation}
where
$$L_0(a) := - \dy^2 + \frac{a^2 + \partial_\tau a }4 y^2 -\frac{3a}2.
$$
The terms $G_{k}$ $(k=1,\dots,4)$ on the \textsc{rhs} above satisfy the estimates
\begin{align*}
\|\langle y\rangle^{-2}G_1\|_{L^\infty}\ls \beta^{\frac{33}{10}},\qquad&\ \|\langle y\rangle^{-3}G_2\|_{L^\infty}
	\ls\beta^{\frac{73}{20}},\\
\|\langle y\rangle^{-1}G_3\|_{L^\infty}\ls \beta^{\frac{53}{20}}, \qquad&\  \|\langle y\rangle^{-2}G_4\|_{L^\infty}
	\ls\beta^{\frac{63}{20}}.
\end{align*}
\end{lemma}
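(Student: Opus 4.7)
The plan is to peel off the lemma in the natural order: orthogonality, then linear evolution for $\xi_\pm$, then derivation of the $\psi_\pm$ equation via an intertwining, and finally the source bounds.

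For the orthogonality claims, I will project the orthogonality condition~\eqref{eq:orthoxi} onto the $e^{\pm i\theta}$ modes. Pairing $e^{-\frac{a}{4}y^2}\xi$ against $\cos\theta,\,\sin\theta,\,y\cos\theta,\,y\sin\theta$ yields immediately that $\xi_\pm\perp\{e^{-\frac{a}{4}y^2},\,ye^{-\frac{a}{4}y^2}\}$ in $L^2(\bR)$. For $\psi_\pm$, writing $\psi_\pm=\partial_y\xi_\pm+\tfrac{ay}{2}\xi_\pm$ and integrating by parts once gives
\[
\int \psi_\pm\, e^{-\frac{ay^2}{4}}\,\rd y
= \int e^{-\frac{ay^2}{2}}\partial_y\bigl(e^{\frac{ay^2}{4}}\xi_\pm\bigr)\,\rd y
= a\int y\,\xi_\pm\,e^{-\frac{ay^2}{4}}\,\rd y = 0,
\]
which is precisely the claimed condition $\psi_\pm\perp e^{-\frac{a}{4}y^2}$.

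To derive the $\xi_\pm$ equation, I will apply $\frac{1}{2\pi}\langle\cdot,e^{\mp i\theta}\rangle_{\bS^1}$ to \eqref{eq:xi}. Under this projection, the term $-\tfrac12\dz^2$ in $L(a,b)$ contributes $+\tfrac12$; so the linear part of $L(a,b)$ acting on $\xi_\pm$ reads $L_0(a)\xi_\pm + \bigl(\tfrac12-\tfrac{2-2a}{2+by^2}\bigr)\xi_\pm$. I will declare $e^{-\frac{ay^2}{4}}G_1$ to be the $\bigl(\tfrac{2-2a}{2+by^2}-\tfrac12\bigr)\xi_\pm$ contribution (moved to the right-hand side) and $e^{-\frac{ay^2}{4}}G_2$ to be the sum of the Fourier-mode-$(\pm1)$ projections of the inhomogeneous and nonlinear terms $F(a,b,\beta)+N_1+N_2+N_3$. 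This produces the first equation of \eqref{eq:eqPhiPm}. For the $\psi_\pm$ equation I will use the conjugation $S:=e^{-\frac{a}{4}y^2}\partial_y e^{\frac{a}{4}y^2}=\partial_y+\tfrac{ay}{2}$ and verify the commutator identity
\[
[S,L_0(a)]=aS+\tfrac{a_\tau y}{2}.
\]
Combined with $\partial_\tau\psi_\pm=S\,\partial_\tau\xi_\pm+\tfrac{a_\tau y}{2}\xi_\pm$, this yields $\partial_\tau\psi_\pm=-(L_0(a)+a)\psi_\pm+S\bigl(e^{-\frac{ay^2}{4}}(G_1+G_2)\bigr)$. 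A short calculation shows $S(e^{-\frac{ay^2}{4}}f)=e^{-\frac{ay^2}{4}}\partial_y f$, so the $\psi_\pm$ source is $e^{-\frac{ay^2}{4}}\partial_y(G_1+G_2)$, which I will identify with $e^{-\frac{ay^2}{4}}(G_3+G_4)$.

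Finally, I will establish the four pointwise bounds. For $G_1$, the algebraic identity
\[
\tfrac{1}{2}-\tfrac{2-2a}{2+by^2}=\tfrac{by^2+(4a-2)}{2(2+by^2)}=\tfrac{b(y^2-1)+\cO(\beta^2)}{2(2+by^2)},
\]
using $|2a-1+b/2|\ls\beta^2$ from~\eqref{eq:abab}, gives a factor of order $\beta\langle y\rangle^2$ in the inner region and $\cO(1)$ in the outer, both dominated by $\beta\langle y\rangle^2$ after multiplication with $e^{\frac{ay^2}{4}}$; pairing this with the bound $\|e^{\frac{ay^2}{4}}\xi_\pm\|_{3,0}\ls\beta^{\frac{31}{20}}\|\langle y\rangle^{-3}\phi\|_{L^\infty}$ supplied by Lemma~\ref{Prop:Gam12} yields $\beta^{33/10}$. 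For $G_2$, I will use the Fourier decomposition of each of $F,N_1,N_2,N_3$: the $\theta$-independent parts drop out, the $\beta_k$ terms are controlled by Lemma~\ref{LM:traj} (picking up the strong rates $\beta^{18/5},\beta^{13/5}$), and the remaining nonlinearities are bounded by combining the pointwise estimates \eqref{eq:upperBounddV}--\eqref{eq:thetaDe} with the weighted $M_{m,n}$-bounds of Lemma~\ref{Prop:Gam12}, as was done in the analogous term-by-term estimates in Appendix~\ref{Sec:Splitting}. The $G_3$ and $G_4$ bounds then follow by differentiating these expressions in $y$, gaining one power of $\langle y\rangle$ (hence the weaker weights) and losing $\beta^{1/2}$ (from $\partial_y V_{a,b}$, $\dy\xi$, or $\partial_y(b y^2)$), matching the stated powers $\beta^{53/20}$ and $\beta^{63/20}$.

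The main obstacle will be the $G_2$ estimate: projecting $N_1,N_2,N_3$ onto Fourier mode $\pm 1$ and exploiting orthogonality/cancellation against the slowly-decaying $\theta$-independent pieces of $v$ so that no term violates the required $\beta^{73/20}$ rate. As in the $\tilde A_5$, $\tilde A_4$ analyses of Appendix~\ref{Sec:Splitting}, the most delicate step is integrating by parts in $\theta$ to trade the nonlinear coefficients $F_\ell(p,q)-F_\ell(0,0)$ for factors of $\dz v$ so that the decay estimates \eqref{eq:thetaDe} can be applied.
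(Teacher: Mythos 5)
Your orthogonality computations and the conjugation identity $[S,L_0(a)]=aS+\tfrac{a_\tau y}{2}$ used to pass from the $\xi_\pm$ equation to the $\psi_\pm$ equation are both correct, and projecting onto the $e^{\pm i\theta}$ modes is the right overall strategy. But there is a genuine gap in your treatment of the linear potential term, and it is exactly the point where the paper organizes the proof differently. Projecting $-L(a,b)\xi$ onto the mode $e^{\pm i\theta}$ leaves the residual potential $\bigl(V_{a,b}^{-2}-\tfrac12\bigr)\xi_\pm$, which you assign to $G_1$ and propose to bound by pairing $|V_{a,b}^{-2}-\tfrac12|\ls\min(\beta\langle y\rangle^2,1)$ with $\|e^{\frac{ay^2}{4}}\xi_\pm\|_{3,0}\ls\beta^{31/20}$. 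This does not give $\beta^{33/10}$: the powers of $\beta$ multiply to only $\beta^{51/20}$, and, worse, the spatial weights do not close --- you are left with $\langle y\rangle^{-2}\cdot\beta\langle y\rangle^{2}\cdot\langle y\rangle^{3}\beta^{31/20}=\beta^{51/20}\langle y\rangle^{3}$, which is unbounded in $y$ (and even restricted to $\beta y^2\le 20$ is only $\cO(\beta^{21/20})$). No a priori bound on $\xi_\pm$ available at this stage can rescue this term, since the smallness $|e^{\frac{ay^2}{4}}\xi_\pm|\ls\beta^{21/10}$ is precisely what this lemma is a step toward proving.

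That term must be cancelled, not estimated: the mode-$(\pm1)$ projection of $N_{3,1}=[V_{a,b}^{-2}-\tfrac12]\dz^2\xi$, which you have buried inside $G_2$, equals $-[V_{a,b}^{-2}-\tfrac12]\xi_\pm$ and annihilates your $G_1$ exactly. (Equivalently --- and this is how the paper proceeds --- one projects equation~\eqref{MCF-v} for $v$ rather than equation~\eqref{eq:xi} for $\xi$, and uses the identity $\langle e^{i\theta},\,v^{-2}\dz^2 v\rangle_{\bS^1}-\langle e^{i\theta},\,v^{-1}\rangle_{\bS^1}=2\langle e^{i\theta},\,v^{-3}(\dz v)^2\rangle_{\bS^1}$, so that no zeroth-order potential ever appears: the linear part of the $v_+$ equation is just $\partial_y^2-ay\partial_y+a$, the sources are genuinely quadratic in $\theta$-derivatives of $v$, and the only remaining linear contributions come from the $\beta_k$, controlled by Lemma~\ref{LM:traj}.) Without invoking this cancellation, your decomposition of the right-hand side into $G_1$ and $G_2$ does not satisfy the stated bounds, and the propagator argument of Lemma~\ref{LemmaC} would not close. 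Your plan for $G_3,G_4$ inherits the same defect, since you define them as $\partial_y(G_1+G_2)$.
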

The orthogonality conditions ~\eqref{eq:xipmOrtho} follow directly from the corresponding properties
for $\xi$. The remainder of the lemma is proved in Section~\ref{eq:phipm}.
\medskip

In what follows, we focus on estimating $\xi_{+}$. The remaining estimates are proved similarly.
For $\xi_{+}$, after going through the same procedures as we followed when deriving ~\eqref{eq:eta},
 we obtain
\begin{align}\label{eq:etaplus}
\partial_{\sigma}[\eta_{+}]=-L_{0}(\alpha)[\eta_{+}]+\tilde{G}_{1}(\sigma)+\tilde{G}_2(\sigma),
\end{align}
where $L_{0}(\alpha)$ is the autonomous linear operator
$$L_{0}(\alpha):=- \partial_{z}^2 + \frac{\alpha^2 }4 z^2 -\frac{3\alpha}2,$$
and $\tilde{G}_{k}$ $(k=1,2)$ are reparametrizations of $G_{k}$ $(k=1,2)$ defined 
similarly to \eqref{eq:difTidD2}.
The terms appearing in the evolution equation~\eqref{eq:etaplus} satisfy the following estimates.

\begin{lemma}
If the assumptions of Section~\ref{SBMI} hold, then for any $\tau\leq T_1$ and any weight $\ell\geq 0$,
one has
\begin{equation}	\label{eq:WEta}
\big\|\langle z\rangle^{-\ell} e^{\frac{\alpha z^2}{4}}\eta_{+}(\sigma)\big\|_{L^\infty}
	\ls \big\|\langle y\rangle^{-\ell} e^{\frac{a y^2}{4}}\xi_{+}(\tau(\sigma))\big\|_{L^\infty},
\end{equation}
along with
\begin{equation} \label{eq:differentWei}
\big\|\langle z\rangle^{-2} \tilde{G}_{1}\big\|_{L^\infty}\ls  \beta^{\frac{33}{10}}(\tau(\sigma))
\quad\text{and}\quad
\big\|\langle z\rangle^{-3}\tilde{G}_{2}\big\|_{L^\infty}\ls \beta^{\frac{73}{20}}(\tau(\sigma)).
\end{equation}
\end{lemma}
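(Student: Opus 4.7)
The plan is to establish both estimates as routine consequences of the defining identity \eqref{NewFun} and the reparametrization rule that transforms \eqref{eq:eqPhiPm} into \eqref{eq:etaplus}, combined with the qualitative comparison bounds already collected in Lemma~\ref{TildeLM:traj}. The first estimate is a pointwise identity between $\eta_+$ and $\xi_+$, while the second transfers the forcing-term bounds from Lemma~\ref{LM:vpm} across the change of variables.

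For \eqref{eq:WEta}, I would extract the $e^{+i\theta}$ Fourier coefficient of both sides of \eqref{NewFun}, using the formula \eqref{DefinePM}, to obtain
\[
    \lambda_1(t)\,e^{\frac{\alpha}{4}z^2}\eta_{+}(z,\sigma)
    =\lambda(t)\,e^{\frac{a(\tau)}{4}y^2}\xi_{+}(y,\tau(\sigma)),
\]
valid whenever $z=(\lambda/\lambda_1)y$. Multiplying by $\lambda_1^{-1}\lan z\ran^{-\ell}$ and taking the supremum, which is preserved under the bijective change $y\mapsto z$, should give
\[
    \|\lan z\ran^{-\ell}e^{\frac{\alpha}{4}z^2}\eta_+(\sigma)\|_{L^\infty}
    \leq \frac{\lambda}{\lambda_1}\,\|\lan z\ran^{-\ell}e^{\frac{a}{4}y^2}\xi_+(\tau(\sigma))\|_{L^\infty}.
\]
The estimate \eqref{eq:WEta} then follows from $|\lambda/\lambda_1-1|\ls\beta$ in \eqref{eq:appTra} together with $\lan z\ran\sim\lan y\ran$ in \eqref{eq:compari}. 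The Gaussian factors $e^{\alpha z^2/4}$ and $e^{ay^2/4}$ cancel exactly against the factors built into the definition of $\eta$ from $\xi$, so no delicate comparison of $\alpha$ with $a$ in the exponent is required.

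For \eqref{eq:differentWei}, I would set up the reparametrization defining $\tilde{G}_k$ in direct analogy with \eqref{eq:difTidD2}, now applied to the forcing terms $e^{-\frac{a}{4}y^2}G_k$ in the $\xi_+$-equation~\eqref{eq:eqPhiPm}. A direct calculation, using $\partial_\tau\sigma=\lambda^2/\lambda_1^2$ and the conjugation relating $\eta_+$ to $\xi_+$, should yield
\[
    \tilde{G}_k(z,\sigma)=\frac{\lambda_1}{\lambda}\,e^{-\frac{\alpha}{4}z^2}\,G_k(y(z,\sigma),\tau(\sigma)),
\]
the factor $e^{-\frac{a}{4}y^2}$ in the original forcing being absorbed by the conjugation factor $e^{\frac{a}{4}y^2-\frac{\alpha}{4}z^2}$ arising from the transformation of $\xi_+$ into $\eta_+$. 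Using $e^{-\alpha z^2/4}\leq 1$, the weight equivalence \eqref{eq:compari}, the bound $\lambda_1/\lambda\ls 1$ from \eqref{eq:appTra}, and the estimates in Lemma~\ref{LM:vpm}, I would then bound
\[
    \|\lan z\ran^{-2}\tilde{G}_1\|_{L^\infty}
    \ls \|\lan y\ran^{-2}G_1\|_{L^\infty}\ls \beta^{\frac{33}{10}}(\tau(\sigma)),
\]
with an identical computation (weight $\lan y\ran^{-3}$, rate $\beta^{\frac{73}{20}}$) handling $\tilde{G}_2$.

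The main obstacle will be tracking the Gaussian conjugation factors $e^{\frac{a}{4}y^2}$ and $e^{\frac{\alpha}{4}z^2}$ carefully under the change of variables $y\mapsto z$, since $\alpha$ and $a$ differ by $\cO(\beta)$ and the ratio $z/y=\lambda/\lambda_1$ also differs from unity. The point is that in both estimates these exponential factors either cancel exactly (as in \eqref{eq:WEta}) or get absorbed against the $e^{-\frac{a}{4}y^2}$ already present in the forcing (as in \eqref{eq:differentWei}), so we never actually need to compare $e^{\alpha z^2/4}$ with $e^{ay^2/4}$ pointwise; once the correct form of $\tilde{G}_k$ is in hand, both estimates reduce to routine applications of previously established bounds.
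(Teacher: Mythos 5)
Your proposal is correct and follows essentially the same route as the paper, which simply defers to the proof of Lemma~\ref{TildeLM:traj}: there, as in your argument, one relates the tilded quantities to the untilded ones via the defining conjugation \eqref{NewFun} and the reparametrization \eqref{eq:difTidD2}, and then transfers the bounds using $|\lambda_1/\lambda-1|\ls\beta$ from \eqref{eq:appTra}, the weight comparison $\langle z\rangle\sim\langle y\rangle$ from \eqref{eq:compari}, and the source estimates of Lemma~\ref{LM:vpm}. Your observation that the Gaussian factors cancel exactly (so that no pointwise comparison of $e^{\alpha z^2/4}$ with $e^{ay^2/4}$ is needed) is precisely the mechanism that makes the paper's terse reduction legitimate.
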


The proofs of estimates~\eqref{eq:WEta} and \eqref{eq:differentWei} are almost identical
to those that appear in Lemma~\ref{LM:traj} above, hence are not repeated here.
\medskip

Returning to equation~\eqref{eq:etaplus}, we apply Duhamel's principle to obtain
\begin{align}\label{eq:etapplus}
\eta_{+}(\sigma)=e^{-\sigma L_{0}(\alpha)} \eta_{+}(0)+\int_0^{\sigma}
e^{-(\sigma-\sigma_1)L_{0}(\alpha)} \big[\tilde{G}_1(\sigma_1)+\tilde{G}_2(\sigma_1)\big]\,\mathrm d\sigma_1.
\end{align}
We again rely on a propagator estimate to prove the decay of $\eta_{+}$.
Observe that the quantum harmonic oscillator $L_{\alpha}$ has nonpositive eigenvalues with
eigenfunctions $e^{-\frac{\alpha}{4}z^2}$ and $ze^{-\frac{\alpha}{4}z^2}$, which might make $\eta$
grow. To control these eigenvectors, we use the orthogonality properties of $\eta_{+}.$ 

Recall \eqref{eq:scaledTime} and note that at time $\sigma=\sigma(T_1)$, namely $\tau=T_1$,
we have $\eta_{+}(\sigma)=\xi_{+}(T_1)$ and $e^{\alpha z^2/4}=e^{a(\tau)y^2/4}$.
Hence by \eqref{eq:xipmOrtho}, we have
\begin{equation*}
\bP_2 [\eta_{+}](\sigma(T_1))=\eta_{+}(\sigma(T_1)),
\end{equation*}
where $\bP_2$ denotes the orthogonal projection onto the subspace orthogonal to the span
of $\{e^{-\alpha z^2/4},\;ze^{-\alpha z^2/4}\}$ (i.e.~orthogonal to the unstable subspace of
$L_{\alpha}$). We apply $\bP_2$ to both sides of ~\eqref{eq:etapplus}, obtaining
\begin{equation}\label{A123decomposition}
\begin{split}
\eta_{+}(\sigma(T_1))
&=\bP_2 \big[e^{-\sigma(T) L_{0}(\alpha)} \eta_{+}\big](0)\\
&\qquad + \int_0^{\sigma(T_1)} e^{-(\sigma(T)-\sigma_1)L_{0}(\alpha)}
\bP_2 \big[\tilde{G}_1(\sigma_1)+\tilde{G}_2(\sigma_1)\big]\,\mathrm d\sigma_1 \\
&=A_1+A_2+A_3,
\end{split}	
\end{equation}
where the terms $A_1,A_2,A_3$ are naturally defined.

We estimate $\bP_2 e^{-\sigma(T)}L_{0}(\alpha)$ as follows.
\begin{lemma}	\label{Nested}
For all times that the assumptions of Section~\ref{SBMI} hold, for any smooth function $g$,
any such time $\sigma>0$, and any weight $k=2,3$, one has
\begin{equation}\label{eq:wei23}
\Big\|\langle z\rangle^{-k}e^{\frac{\alpha z^2}{4}}\bP_2 \big[e^{-\sigma L_{0}(\alpha)} g\big] \Big\|_{L^\infty}
\ls e^{-\alpha\sigma} \Big\|\langle z\rangle^{-k}e^{\frac{\alpha z^2}{4}} g\Big\|_{L^\infty};
\end{equation}
while for the chosen weight $\langle z\rangle^{-\frac{11}{10}}$, one has
\begin{equation}\label{eq:we1110}
\Big\|\langle z\rangle^{-\frac{11}{10}}e^{\frac{\alpha z^2}{4}}\bP_2 \big[e^{-\sigma L_{0}(\alpha)} g\big]
\Big\|_{L^\infty}
\ls e^{-\frac{\alpha}{10}\sigma} \Big\|\langle z\rangle^{-\frac{11}{10}}e^{\frac{\alpha z^2}{4}} g\Big\|_{L^\infty}.
\end{equation}
\end{lemma}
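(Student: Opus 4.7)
The plan is to adapt the Mehler-kernel plus integration-by-parts strategy used in the proof of Lemma~\ref{PRO:propagator}, crucially exploiting that $L_0(\alpha)=-\partial_z^2+\frac{\alpha^2}{4}z^2-\frac{3\alpha}{2}$ is a shifted quantum harmonic oscillator with spectrum $\{\alpha(n-1):n\geq 0\}$ and Hermite eigenfunctions $\psi_n\propto H_n(\sqrt{\alpha/2}\,z)\,e^{-\alpha z^2/4}$. Since $\bP_2$ projects away $\psi_0,\psi_1$, its range lies in the spectral subspace $\{L_0(\alpha)\geq\alpha\}$, which is the origin of the $e^{-\alpha\sigma}$ rate. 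Writing
\[
[e^{-\sigma L_0(\alpha)}\bP_2 g](z)=e^{\alpha z^2/4}\int U_0(z,y,\sigma)\,f(y)\,\mathrm dy,\qquad f:=e^{-\alpha y^2/4}\bP_2 g,
\]
with $U_0\asymp e^{\alpha\sigma}(1-e^{-2\alpha\sigma})^{-1/2}\exp(-\lambda w^2)$, $\lambda=\alpha/(2(1-e^{-2\alpha\sigma}))$ and $w=z-e^{-\alpha\sigma}y$, the orthogonality $\bP_2 g\perp\{\psi_0,\psi_1\}$ translates into the vanishing-moment conditions $\int f\,\mathrm dy=0$ and $\int yf\,\mathrm dy=0$.

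For estimate \eqref{eq:wei23} at weights $k\in\{2,3\}$, I would integrate by parts twice, obtaining $\int \partial_y^2 U_0\,f^{(-2)}\,\mathrm dy$ with $|f^{(-2)}(y)|\lesssim\langle y\rangle^{(k-2)_+}e^{-\alpha y^2/2}\|\langle z\rangle^{-k}e^{\alpha z^2/4}g\|_{L^\infty}$. A direct computation gives $\partial_y^2 U_0=2\lambda e^{-2\alpha\sigma}(-1+2\lambda w^2)U_0$, so the factors $e^{-2\alpha\sigma}$ (from differentiating $U_0$) and $e^{\alpha\sigma}$ (from the Mehler prefactor) combine to the net factor $e^{-\alpha\sigma}$. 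Completing the square in the exponent $-\lambda w^2-\alpha y^2/2$ and computing the Gaussian moments produces, after the cancellations between the $(1-e^{-2\alpha\sigma})$ factors, a leading contribution proportional to $(\alpha z^2-1)e^{-\alpha z^2/2}$; this is precisely the Hermite polynomial $H_2$ associated with the lowest eigenfunction $\psi_2$ retained in $\bP_2$. The weight $\langle z\rangle^{-k}$ with $k\geq 2$ absorbs this polynomial growth, yielding \eqref{eq:wei23}.

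For estimate \eqref{eq:we1110} at weight $k=11/10$, the weight no longer absorbs $(\alpha z^2-1)$, so the above argument must be replaced. I would first establish an auxiliary $\sigma$-uniform bound at the endpoint weight $k=1$ by performing a single integration by parts (using only the first moment condition); the resulting calculation involves $\partial_y U_0=2\lambda e^{-\alpha\sigma}w\,U_0$, so the $e^{\pm\alpha\sigma}$ factors cancel and the Gaussian moment produces a linear factor in $z$ that is absorbed by $\langle z\rangle^{-1}$, giving $\|\langle z\rangle^{-1}e^{\alpha z^2/4}e^{-\sigma L_0(\alpha)}\bP_2 g\|_{L^\infty}\lesssim \|\langle z\rangle^{-1}e^{\alpha z^2/4}g\|_{L^\infty}$. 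Then, interpolating between this bound (rate $e^0$) at $k=1$ and the sharp bound \eqref{eq:wei23} (rate $e^{-\alpha\sigma}$) at $k=2$ via a Stein/Hadamard three-lines argument applied to the holomorphic family of weights $\langle z\rangle^{-s}$ with $\mathrm{Re}\,s\in[1,2]$, the intermediate exponent $11/10=(1-\tfrac{1}{10})\cdot 1+\tfrac{1}{10}\cdot 2$ yields decay rate $(e^0)^{9/10}(e^{-\alpha\sigma})^{1/10}=e^{-\alpha\sigma/10}$, establishing \eqref{eq:we1110}.

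The main obstacle will be the careful bookkeeping in Step~2: tracking the compensating factors $e^{\pm\alpha\sigma}$, $(1-e^{-2\alpha\sigma})$, and $\lambda$ in the Mehler kernel through the Gaussian-moment computation, and verifying that the resulting polynomial is asymptotically $(\alpha z^2-1)$—a factor naturally interpreted as the Hermite polynomial of $\psi_2$—so that the $e^{-\alpha\sigma}$ rate emerges as predicted by the spectral gap rather than the $e^0$ rate suggested by crude bookkeeping. A secondary technical point is justifying the interpolation in Step~3; this is standard once the two endpoint operator bounds are established, using the explicit Mehler-kernel representation to produce a holomorphic family of operators to which the three-lines lemma applies.
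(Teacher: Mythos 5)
Your proposal matches the paper's own proof in all essentials: estimate~\eqref{eq:wei23} is obtained exactly as in Lemma~\ref{PRO:propagator} \emph{mutatis mutandis} — Mehler kernel, the two vanishing moments supplied by $\bP_2$, two integrations by parts whose $e^{-2\alpha\sigma}$ gain combines with the $e^{\alpha\sigma}$ kernel prefactor (bottom eigenvalue $-\alpha$) to yield the spectral-gap rate, with the small-$\sigma$ regime handled by the direct kernel bound — and estimate~\eqref{eq:we1110} is obtained by interpolating a non-decaying $k=1$ endpoint (one integration by parts, writing $\bP_2[e^{-\sigma L_0}g]=\bP_1[e^{-\sigma L_0}\bP_2 g]$) against the $k=2$ case. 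The only cosmetic difference is the interpolation mechanism: the paper (via \cite{GS09}, Prop.~11.5) uses the elementary pointwise identity $\langle z\rangle^{-11/10}|h|=\big(\langle z\rangle^{-1}|h|\big)^{9/10}\big(\langle z\rangle^{-2}|h|\big)^{1/10}$ rather than a Stein three-lines argument, which is heavier than needed (and delicate for $L^\infty$--$L^\infty$ endpoints) but leads to the same $e^{-\alpha\sigma/10}$ rate.
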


\begin{proof}[Proof of Lemma~\ref{Nested}]
Both cases of Estimate~\eqref{eq:wei23} are proved by following the arguments in the proof
of  Lemma~\ref{PRO:propagator}, \emph{mutatis mutandis}.

Estimate~\eqref{eq:we1110} is obtained via an interpolation technique, as in Proposition~11.5
from~\cite{GS09}. Here, one interpolates between 
\begin{align*}
\Big\|\langle z\rangle^{-1}e^{\frac{\alpha z^2}{4}}\bP_2[e^{-\sigma L_{0}(\alpha)} g]\Big\|_{L^\infty}
&= \Big\|\langle z\rangle^{-1}e^{\frac{\alpha z^2}{4}}\bP_1\big[e^{-\sigma L_{0}(\alpha)} \bP_2 g\big]\Big\|_{L^\infty} \\
&\ls \Big\|\langle z\rangle^{-1}e^{\frac{\alpha z^2}{4}} g\Big\|_{L^\infty}
\end{align*}
and the $k=2$ case of estimate~\eqref{eq:wei23}. Note that $\bP_1$ denotes orthogonal projection
onto the subspace orthogonal to $e^{-\frac{\alpha y^2}{4}}.$ We omit further details.
\end{proof}

Continuing with the proof of Lemma~\ref{LemmaC}, we now apply~\eqref{eq:WEta} to estimate the
first term $A_1$ in the decomposition~\eqref{A123decomposition} by
\begin{align*}
\Big\|\langle z\rangle^{-\frac{11}{10}}e^{\frac{\alpha z^2}{4}}A_1\Big\|_{L^\infty}
&\ls e^{-\frac{\alpha}{10}\sigma(T)} \Big\|\langle z\rangle^{-\frac{11}{10}} e^{\frac{\alpha z^2}{4}}\eta_{+}(0)
\Big\|_{L^\infty}\\
&\ls e^{-\frac{\alpha}{10}\sigma(T)} \Big\|\langle y\rangle^{-\frac{11}{10}} e^{\frac{a y^2}{4}}\xi_{+}(0)
\Big\|_{L^\infty}\\
&\ls \beta^{\frac{53}{20}}(T_1).
\end{align*}
Note that here we use the stronger\footnote{That is to say, stronger than we needed in \cite{GKS11}.}
restrictions
$\|\langle x\rangle^{-\frac{11}{10}} (u_0)_{\pm}\|_{L^\infty}\ls b^{\frac{53}{20}}(0)=\beta^{\frac{53}{20}}(0)$ and
$\|\langle x\rangle^{-\frac{11}{10}}\dx(u_0)_{\pm}\|_{L^\infty}\ls b^{\frac{53}{20}}(0)=\beta^{\frac{53}{20}}(0)$ 
on the initial data contained in Assumption~[A1] to get the estimate
$\|\langle y\rangle^{-\frac{11}{10}} e^{\frac{a y^2}{4}}\xi_{\pm}(0)\|_{L^\infty}\ls\beta^{\frac{53}{20}}(0)$
and the estimate
$\|\langle y\rangle^{-\frac{11}{10}} \dy e^{\frac{a y^2}{4}}\xi_{\pm}(0)\|_{L^\infty}\ls\beta^{\frac{53}{20}}(0)$.
It follows that in the region $\beta y^2\leq20$,
\begin{equation}\label{eq:a1Inner}
\big| e^{\frac{\alpha z^2}{4}}A_1\big| \ls \beta^{\frac{21}{10}}(T_1).
\end{equation}
Here we also used the fact that $y=z$ at time $\tau=T_1$.

To estimate the second term in the decomposition~\eqref{A123decomposition}, we use
\eqref{eq:differentWei}, obtaining
\begin{align*}
\Big\|\langle z\rangle^{-2}e^{\frac{\alpha z^2}{4}}A_2\Big\|_{L^\infty}
\ls\int_0^{\sigma(T)}e^{-\frac{1}{8}(\sigma(T_1)-\sigma_1)}
\beta^{\frac{33}{10}}(\tau(\sigma_1))\,\mathrm d\sigma
\ls\beta^{\frac{33}{10}}(T_1).
\end{align*}
Hence in the region $\beta y^2\leq 20$, one has
\begin{align}\label{eq:a2Inner}
\big|e^{\frac{\alpha z^2}{4}}A_2\big| \ls \beta^{\frac{23}{10}}(T_1).
\end{align}

To estimate the final term $A_3$ in the decomposition~\eqref{A123decomposition}, we use
a different norm: here we apply \eqref{eq:differentWei} again to get
\begin{equation*}
\Big\|\langle z\rangle^{-3}e^{\frac{\alpha z^2}{4}}A_3\Big\|_{L^\infty}
\ls \int_0^{\sigma(T_1)}e^{-\frac{1}{8}(\sigma(T_1)-\sigma_1)}
\beta^{\frac{19}{5}}(\tau(\sigma_1))\,\mathrm d\sigma\ls\beta^{\frac{19}{5}}(T_1).
\end{equation*}
Thus in the region $\beta y^2\leq 20$, one has
\begin{align}\label{eq:a3Inner}
\big|e^{\frac{\alpha z^2}{4}}A_3\big| \ls \beta^{\frac{23}{10}}(T_1).
\end{align}

Collecting estimates~\eqref{eq:a1Inner}--\eqref{eq:a3Inner} above yields
$$
\Big|e^{\frac{a(\tau)y^2}{4}}\xi_{+}(T_1)\Big|
= \Big| e^{\frac{\alpha z^2}{4}}\eta_{+}(\sigma(T_1))\Big| \ls\beta^{\frac{23}{10}}(T_1)
\qquad\text{if}\quad \beta y^2\leq 20.
$$
Because $T_1\geq 0$ is arbitrary, this completes the proof of the estimate
for $\xi_+$ in Lemma~\ref{LemmaC}, modulo the proof of Lemma~\ref{LM:vpm}
that appears below. The remaining estimates are obtained in a wholly analogous manner.
\end{proof}

\subsection{Proof of  Lemma~\ref{LM:vpm}}\label{eq:phipm}
\begin{proof}[Proof of  Lemma~\ref{LM:vpm}] The proof is in two parts.

\textsc{Part (i)}
We start by deriving the first evolution equation in~\eqref{eq:eqPhiPm} and its associated
estimates. Instead of using equation~\eqref{eq:xi} for $\xi$, it is more convenient to work
directly from equation~\eqref{MCF-v} for $v$, in order to see and exploit certain fortuitous cancellations.

Using notation defined in Section~\ref{Notation}, we take an inner product
$\langle e^{i\theta},\ \cdot\rangle_{\bS^1}$ with both sides of equation~\eqref{MCF-v} to obtain
\begin{equation}\label{eq:vplus}
\begin{split}
2\pi\,\dt v_+ & =\dt \big\langle e^{i\theta},\ v\big\rangle_{\bS^1}\\
& = \big\langle e^{i\theta},\ F_{1}(p,q)\partial_{y}^2 v\big\rangle_{\bS^1}
+ \big\langle e^{i\theta},\ v^{-2}F_{2}(p,q)\partial_{\theta}^2 v\big\rangle_{\bS^1}\\
&\quad + \big\langle e^{i\theta},\ v^{-1}F_{3}(p,q)\partial_{\theta}\partial_{y} v\big\rangle_{\bS^1}
+ \big\langle e^{i\theta},\ v^{-2}F_{4}(p,q)\partial_{\theta} v \big\rangle_{\bS^1}\\
&\quad - \big\langle e^{i\theta},\ ay\partial_{y}v \big\rangle_{\bS^1}
+ \big\langle e^{i\theta},\ av\big\rangle_{\bS^1} - \big\langle e^{i\theta},\ v^{-1}\big\rangle_{\bS^1}.
\end{split}
\end{equation}
To transform this equation into a form similar to \eqref{eq:eqPhiPm}, we have to allocate appropriate
terms to $G_1$ and $G_2.$ For this purpose, we decompose various terms on the \textsc{rhs} of
equation~\eqref{eq:vplus}.

We decompose the first term on the \textsc{rhs} of \eqref{eq:vplus} into two terms,
\begin{align*}
\big\langle e^{i\theta},\ F_{1}(p,q)\partial_{y}^2 v \big\rangle_{\bS^1}
& = \big\langle e^{i\theta},\ \partial_{y}^2 v\big\rangle_{\bS^1}
+ \big\langle e^{i\theta},\ [F_{1}(p,q)-1]\partial_{y}^2 v\big\rangle_{\bS^1}\\
& = 2\pi\,\partial_{y}^2 v_{+} + i \big\langle e^{i\theta},\ \partial_{\theta}[(F_{1}(p,q)-1)\partial_{y}^2 v]
\big\rangle_{\bS^1},
\end{align*}
with the final term above obtained by integrating by parts in $\theta$.
Direct computation yields
\begin{multline*}
\partial_{\theta}\big[(F_{1}(p,q)-1)\partial_{y}^2 v\big]
= - \frac{p^2}{1+p^2+q^2}\partial_{\theta}\partial_{y}^2 v\\
- 2 \frac{\partial_{y}\partial_{\theta}v \partial_{y}v}{1+p^2+q^2}\partial_{y}^2 v
+ 2 \frac{p^2 (p\partial_{\theta}p+q\partial_{\theta}q)}{(1+p^2+q^2)^2} \partial_{y}^2 v.
\end{multline*}

Now we group various terms on the \textsc{rhs} of \eqref{eq:vplus} by introducing two functions
$\tilde{G}_1$ and $\tilde{G}_2$, defined by
\begin{align}
2\pi\,\tilde{G}_1&:=\Big\langle e^{i\theta},\ -\frac{2\partial_{y}\partial_{\theta}v \partial_{y}v}{1+p^2+q^2}
	\partial_{y}^2 v\Big\rangle_{\bS^1}
+2\Big\langle e^{i\theta},\ \frac{p^2 (p\partial_{\theta}p+q\partial_{\theta}q)}{(1+p^2+q^2)^2}
	\partial_{y}^2 v\Big\rangle_{\bS^1}\\
&\quad+
\big\langle e^{i\theta},\ v^{-2}F_{2}(p,q)\partial_{\theta}^2 v\big\rangle_{\bS^1}
-\big\langle e^{i\theta},\ v^{-1}\big\rangle_{\bS^1}\nonumber\\
&\quad
+\big\langle e^{i\theta},\ v^{-1}F_{3}(p,q)\partial_{\theta}\partial_{y} v\big\rangle_{\bS^1}
+\big\langle e^{i\theta},\ v^{-2}F_{4}(p,q)\partial_{\theta} v\big\rangle_{\bS^1}\nonumber\\
\nonumber\\
2\pi\,\tilde{G}_2&:=-\frac{p^2}{1+p^2+q^2}\partial_{\theta}\partial_{y}^2 v.
\end{align}
This lets us write
\begin{equation}	\label{eq:tildeG12}
\partial_{\tau}v_{+}= \big[\partial_{y}^2-ay\partial_{y}+a\big] v_{+}+\tilde{G}_1+\tilde{G}_2,
\end{equation}
where $\tilde{G}_1$ and $\tilde{G}_2$ satisfy the following estimates.

\begin{lemma}\label{LM:tidG12}
For all times that the assumptions of Section~\ref{SBMI} hold, one has
\[
\|\langle y\rangle^{-2}\tilde{G}_1\|_{L^\infty}\ls\beta^{\frac{33}{10}}\qquad\text{and}\qquad
\|\langle y\rangle^{-3}\tilde{G}_2\|_{L^\infty}\ls\beta^{\frac{73}{20}}.
\]
\end{lemma}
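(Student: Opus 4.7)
My plan is to prove the two estimates by treating each term in $\tilde{G}_1$ and $\tilde{G}_2$ separately and applying the pointwise first‐ and second‐order bounds from the outputs of the first bootstrap machine (collected in \eqref{eq:lower}--\eqref{eq:thetaDe}), together with the relation $v\ls \langle y\rangle$ from \eqref{eq:y-1} and the observation that $p^{2}=(\partial_{y}v)^{2}\ls \beta$ that follows from the decomposition $v=V_{a,b}+\tilde\phi$ (since $\partial_{y}V_{a,b}=by/[\sqrt{2-2a}\sqrt{2+by^{2}}]$ is bounded pointwise by $\cO(\sqrt{b})$, and the fluctuations are of strictly higher order by Lemma~\ref{LM:traj} and Lemma~\ref{Prop:Gam12}).

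For $\tilde{G}_{2}=-\frac{p^{2}}{1+p^{2}+q^{2}}\partial_{\theta}\partial_{y}^{2}v$, the plan is to insert $p^{2}\ls \beta$, bound $(1+p^{2}+q^{2})^{-1}\le 1$, and use the first-bootstrap output $v^{-1}|\partial_{y}^{2}\partial_{\theta}v|\ls \beta^{33/20}$ to get $|\tilde{G}_{2}|\ls v\,\beta^{53/20}$. Combined with the weight $\langle y\rangle^{-3}$, the claim $\beta^{73/20}$ then follows by splitting into the outer region $\{\beta y^{2}\ge 20\}$, where $v\ls \sqrt{\beta}\langle y\rangle$ supplies the missing factor of $\sqrt{\beta}$, and the inner region $\{\beta y^{2}\le 20\}$, where [C0i] gives $v\ls 1$ while the bound $\langle y\rangle^{-2}\gtrsim \beta$ at the outer edge of the inner region plus the sharper inner bounds~[C1i]--[C2i] close the gap.

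For $\tilde{G}_{1}$, I treat its six constituents in turn. Five of them contain at least one explicit $\theta$-derivative of $v$, of $p$, or of $q=v^{-1}\partial_{\theta}v$, and these yield a factor of $\beta^{33/20}$ from the corresponding output in \eqref{eq:thetaDe}: for instance the $v^{-2}F_{2}\partial_{\theta}^{2}v$ term is controlled by $v^{-2}|\partial_{\theta}^{2}v|\ls \beta^{33/20}$, the $v^{-1}F_{3}\partial_{y}\partial_{\theta}v$ term by $v^{-1}|\partial_{y}\partial_{\theta}v|\ls \beta^{33/20}$, the $v^{-2}F_{4}\partial_{\theta}v$ term by $v^{-2}|\partial_{\theta}v|\ls \beta^{33/20}$, and the two terms coming from integration by parts of $(F_{1}-1)\partial_{y}^{2}v$ are small because they carry the additional factor $p^{2}\ls \beta$ together with $|\partial_{y}^{2}v|\ls \beta^{13/20}$ and another $\theta$-derivative. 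The term $-\langle e^{i\theta},v^{-1}\rangle_{\bS^{1}}$ is the one exception, because $v^{-1}$ carries no $\theta$-derivative; here I use the identity $\partial_{\theta}^{2}e^{i\theta}=-e^{i\theta}$ to integrate by parts twice in $\theta$, obtaining
\[
\langle e^{i\theta},v^{-1}\rangle_{\bS^{1}}=-\langle e^{i\theta},\,2v^{-3}(\partial_{\theta}v)^{2}-v^{-2}\partial_{\theta}^{2}v\rangle_{\bS^{1}},
\]
and then the two produced terms are bounded by $v^{-2}|\partial_{\theta}v|\ls \beta^{33/20}$ paired with $v^{-1}|\partial_{\theta}v|\ls \kappa_{0}^{-1/2}$, and by $v^{-2}|\partial_{\theta}^{2}v|\ls \beta^{33/20}$, respectively. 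Combining all six contributions produces $|\tilde{G}_{1}|\ls v^{2}\beta^{33/10}$, and the weight $\langle y\rangle^{-2}$ combined with $v\ls \langle y\rangle$ yields the claimed $\|\langle y\rangle^{-2}\tilde{G}_{1}\|_{L^{\infty}}\ls \beta^{33/10}$.

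The main obstacle, and where I anticipate having to be most careful, is keeping track of the interplay between the $v^{-n}$ weights appearing in the first-bootstrap outputs and the spatial weights $\langle y\rangle^{-k}$ appearing in the claim, since these are sharp in different regions: in the outer region an extra $v\sim\sqrt{\beta}\langle y\rangle$ is available, whereas in the inner region one must rely on $v\ls 1$ together with the sharper inner bounds [C1i]--[C2i] and the estimate $p^{2}\ls \beta$ coming from $\partial_{y}V_{a,b}$. The other point requiring care is the $\langle e^{i\theta},v^{-1}\rangle_{\bS^{1}}$ term, where the necessary two integrations by parts must exactly expose enough $\theta$-derivatives to make the estimates from~\eqref{eq:thetaDe} applicable.
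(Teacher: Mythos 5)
Your overall plan (term-by-term estimation using the first-machine outputs plus the weight $\langle y\rangle^{-1}\ls v^{-1}$) is the right framework, but two of your steps fail as written.

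First, the bookkeeping for $\tilde G_1$ does not close. The target $\beta^{33/10}=(\beta^{33/20})^2$ requires \emph{two} small factors from each constituent, yet you bound the $v^{-2}F_2\dz^2 v$ term by the single factor $v^{-2}|\dz^2 v|\ls\beta^{33/20}$, and the two pieces produced by your double integration by parts of $\langle e^{i\theta},v^{-1}\rangle_{\bS^1}$ by $\beta^{33/20}\kappa_0^{-1/2}$ and $\beta^{33/20}$; a sum of terms of size $\beta^{33/20}$ is only $O(\beta^{33/20})$, and the asserted conclusion $|\tilde G_1|\ls v^2\beta^{33/10}$ does not follow. For the $F_3$ and $F_4$ terms the second factor is available because $F_3=-2pq/(1+p^2+q^2)$ and $F_4=q/(1+p^2+q^2)$ themselves carry factors of $p$ and $q$ (which you do not invoke), but for $\langle e^{i\theta},v^{-2}F_2\dz^2 v\rangle_{\bS^1}$ and $\langle e^{i\theta},v^{-1}\rangle_{\bS^1}$ no extra smallness exists \emph{individually}: the gain comes only from the cancellation between them. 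The paper combines them before estimating, leaving $-i\langle e^{i\theta},v^{-2}(F_2-1)\dz v\rangle_{\bS^1}-\langle e^{i\theta},\dz(v^{-2}F_2)\dz v\rangle_{\bS^1}$, where $F_2-1=-q^2/(1+p^2+q^2)$ supplies the missing factor. Your double integration by parts in fact produces exactly the piece $\langle e^{i\theta},v^{-2}\dz^2 v\rangle_{\bS^1}$ that cancels the leading part of the $F_2$ term, but by estimating the two terms separately you never see this and your bound stalls at $\beta^{33/20}$. (The remaining piece $v^{-3}(\dz v)^2$ is fine, but only if you convert it via the weight to $(v^{-2}\dz v)^2v^{-1}\ls\beta^{33/10}$, not via the pairing with $v^{-1}|\dz v|\ls\kappa_0^{-1/2}$ that you propose.)

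Second, the treatment of $\tilde G_2$ rests on two unavailable pointwise bounds. The inequality $p^2\ls\beta$ is justified only in the inner region (where $v\ls1$); globally the decomposition gives only the \emph{weighted} bound $\langle y\rangle^{-1}|\dy\phi|\ls\beta^{11/10}$, and the first-machine output is merely $|\dy v|\le\ve_0$ with $\ve_0$ a fixed constant, so $p^2\ls\beta$ fails for large $|y|$. Likewise $v\ls\sqrt\beta\langle y\rangle$ in the outer region is not an output of either machine ($v=\cO(\langle y\rangle)$ with slope $\ve_0$ independent of $\tau$ is all that is known, and $\ve_0\gg\sqrt\beta$ for large $\tau$). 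More seriously, even where $p^2\ls\beta$ does hold, near $y=0$ your chain gives only $\langle y\rangle^{-3}|\tilde G_2|\ls\beta\cdot\beta^{33/20}=\beta^{53/20}$, a full power of $\beta$ short of $\beta^{73/20}$; neither [C1i] nor [C2i] (which controls only the $e^{\pm i\theta}$ Fourier modes) closes this gap. The correct input is the weighted gradient bound $\langle y\rangle^{-1}|p|\ls\beta$ of \eqref{eq:yyv} — coming from $|\dy V_{a,b}|\ls b|y|$ near the center together with $M_{1,1}\ls1$ — which yields $\langle y\rangle^{-2}p^2\ls\beta^2$ and, paired with $\langle y\rangle^{-1}|\dz\dy^2 v|\ls v^{-1}|\dz\dy^2 v|\ls\beta^{33/20}$ from \eqref{eq:y-1} and \eqref{eq:thetaDe}, gives $\beta^{73/20}$ in both regions at once.
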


\begin{proof}[Proof of Lemma~\ref{LM:tidG12}]
We first estimate $\tilde{G}_2$, computing directly to find that
\begin{align}\label{onlyG2}
\Big|\langle y\rangle^{-3}\frac{p^2}{1+p^2+q^2}\partial_{\theta}\partial_{y}^2 v\Big|
\ls\langle y\rangle^{-2} |p|^2 \ \langle y\rangle^{-1}|\partial_{\theta}\partial_{y}^2 v|\ls\beta^{\frac{73}{20}}.
\end{align}
In the final step above, we used the estimates $\langle y\rangle^{-1}\ls v^{-1}$ from~\eqref{eq:y-1} and
$v^{-1}|\partial_{\theta}\partial_{y}^2 v|\ls\beta^{\frac{33}{20}}$ from~\eqref{eq:thetaDe}, as well as the estimate
\begin{equation}\label{eq:yyv}
\begin{split}
\langle y\rangle^{-1} |p|  &\equiv \langle y\rangle^{-1}|\partial_{y}v|
\leq \langle y\rangle^{-1} |\partial_{y}V_{a,b}|\\
&\qquad +\sum_{k=0}^{4} |\beta_{k}|+|\partial_{y}\phi|
\leq \beta+\beta^{\frac{11}{10}}M_{1,1}\ls \beta
\end{split}
\end{equation}
implied by the assumptions on $\beta_{k}$ $(k=0,\cdots,4)$ and the assumption $M_{1,1}\ls 1.$

Now we turn to $\tilde{G}_1.$
The assumptions in ~\eqref{eq:thetaDe}, which are outputs of the first bootstrap machine,
show that its first two terms admit the estimate
$$\langle y\rangle^{-2} |\cdots|\ls \beta^{\frac{33}{10}}.
$$
We treat the third and fourth terms of \eqref{eq:vplus} together to exploit certain
cancellations. By integrating by parts in $\theta$, we obtain
\begin{align*}
\big\langle e^{i\theta},&v^{-2}F_{2}(p,q)\partial_{\theta}^2 v\big\rangle_{\bS^1}-\big\langle e^{i\theta},\ v^{-1}\big\rangle_{\bS^1}\\
& = -i \big\langle e^{i\theta},\ v^{-2}F_{2}(p,q)\partial_{\theta} v\big\rangle_{\bS^1}
+ i \big\langle e^{i\theta},\ v^{-2}\partial_{\theta} v \big\rangle_{\bS^1}
- \big\langle e^{i\theta},\ \partial_{\theta}[v^{-2}F_{2}(p,q)]\ \partial_{\theta}v\big\rangle_{\bS^1}\\
&= -i \big\langle e^{i\theta},\ v^{-2}[F_{2}(p,q)-1]\partial_{\theta} v\big\rangle_{\bS^1}
- \big\langle e^{i\theta},\ \partial_{\theta}[v^{-2}F_{2}(p,q)]\ \partial_{\theta}v\big\rangle_{\bS^1}.
\end{align*}
Recall that $F_2=\frac{1+p^2}{1+p^2+q^2}.$ Controlling the terms that appear here is not hard,
when one takes advantage of the presence of the operator $\partial_{\theta}$, the presence of
sufficiently many factors of $v^{-1}$ (helped by the estimate $\langle y\rangle^{-1}\ls v^{-1}$),
and employs estimates that have been proved and used frequently above. In this way, one finds
that the third and fourth terms are bounded by
$$\|\langle y\rangle^{-2}(\cdots)\|_{L^\infty}\ls \beta^{\frac{33}{10}}.$$
In the same way, we find that the fifth and sixth terms of $\tilde G_1$ also admit the estimate
$$\|\langle y\rangle^{-2}(\cdots)\|_{L^\infty}\ls\beta^{\frac{33}{10}}.$$
Collecting the estimates above, completes the proof.
\end{proof}

Returning to the proof of Lemma~\ref{LM:vpm}, we go back to equation~\eqref{eq:tildeG12}.
We must transform it further, because our objective is to derive an equation for $\xi_{+}$.
The decomposition of $v$ in equation~\eqref{eqn:split2} implies that
\begin{multline*}
\partial_{\tau}\xi_{+}
=-L_0(a)\xi_{+}+e^{-\frac{ay^2}{4}}[a-\partial_{\tau}]\left(\frac{1}{2}\beta_1+\frac{1}{2i}\beta_2\right)\\
	-y e^{-\frac{ay^2}{4}}\partial_{\tau}\left(\frac{1}{2}\beta_3 +\frac{1}{2i}\beta_4\right)
+e^{-\frac{ay^2}{4}}\tilde{G}_1+e^{-\frac{ay^2}{4}}\tilde{G}_2.
\end{multline*}
To derive the simple form above, we repeatedly used the fact that $e^{-\frac{ay^2}{4}}$ and 
$y e^{-\frac{ay^2}{4}}$ are eigenvectors of $L_{0}(a)$.
By the equations for $\beta_{k}$ $(k=1,\dots,4)$ in Lemma ~\ref{LM:traj}, we have 
\begin{equation*}
\big|[-\partial_{\tau}+a][\beta_1-i\beta_2]\big|
+\big|\partial_{\tau}[\beta_3 -i\beta_4  ]\big|\ls \beta^{\frac{33}{10}}.
\end{equation*} 
This leads us to write
$$\partial_{\tau}\xi_{+}=-L_0(a)\xi_{+}+G_1+G_2,$$
where
$$
G_1:=e^{-\frac{ay^2}{4}}[-\partial_{\tau}+a]\left(\frac{1}{2}\beta_1+\frac{1}{2i}\beta_2\right)
-e^{-\frac{ay^2}{4}}\partial_{\tau}\left(\frac{1}{2}\beta_3 y+\frac{1}{2i}\beta_4 y\right)
+e^{-\frac{ay^2}{4}}\tilde{G}_1,
$$
and $$G_2:=e^{-\frac{ay^2}{4}}\tilde{G}_2.$$
To conclude the first part of the proof of Lemma~\ref{LM:vpm}, we apply Lemma~\ref{LM:tidG12} to obtain
$$
\Big\|\langle y\rangle^{-2}e^{\frac{ay^2}{4}}G_1\Big\|_{L^\infty} 
\ls\beta^{\frac{33}{10}}\qquad\text{and}\qquad
\Big\|\langle y\rangle^{-3}e^{\frac{ay^2}{4}}G_2\Big\|_{L^\infty}\ls \beta^{\frac{73}{20}}.
$$
\medskip

\textsc{Part (ii)} We now derive the second evolution equation in~\eqref{eq:eqPhiPm} and prove
its associated estimates.

Our first task is to derive an evolution equation for
$(\dy v)_{+}=\frac{1}{2\pi}\langle e^{i\theta},\ \partial_{y}v\rangle_{\bS^1}$.
(For $(\dy v)_{-}$, it suffices to observe that $\dy v_{-}=\overline{(\dy v)_{+}}$.)
We compute directly to get
$$\dt \dy v_{+}=\partial_{y}^3 v_{+}-ay\partial_{y}^2 v_{+}+\tilde{G}_3+\tilde{G}_4,$$
where
\begin{align*}
\tilde{G}_3
& := \big\langle e^{i\theta},\ [F_1(p,q)-1]\partial_{y}^3 v\big\rangle_{\bS^1}
+ \big\langle e^{i\theta}, \partial_{y}F_{1}(p,q)\partial_{y}^2v\big\rangle_{\bS^1}\\
&\quad + \big\langle e^{i\theta},\ v^{-2}[F_2(p,q)-1]\partial_{y}\partial_{\theta}^2 v\big\rangle_{\bS^1}
+ \big\langle e^{i\theta},\ \partial_{y}[v^{-2}F_{2}(p,q)]\partial_{\theta}^2 v\big\rangle_{\bS^1}\\
&\quad + \big\langle e^{i\theta},\ \partial_{y}[v^{-1}F_{3}(p,q)\partial_{\theta}\partial_{y}v]\big\rangle_{\bS^1}
+ \big\langle e^{i\theta},\ \partial_{y}[v^{-2}F_{4}(p,q)\partial_{\theta}v]\big\rangle_{\bS^1}.
\end{align*}
and
$$
\tilde{G}_4:=\langle e^{i\theta},\ v^{-2}\partial_{\theta}^2\partial_{y}v\rangle_{\bS^1}+\langle e^{i\theta}, v^{-2}\partial_{y}v\rangle_{\bS^1}.
$$
We now proceed to estimate these quantities.

To control $\tilde{G}_4$, we twice integrate by parts in $\theta$, obtaining
\begin{align*}
\tilde{G}_4&=
\big\langle \partial_{\theta}^2[v^{-2}e^{i\theta}],\ \partial_{y}v\big\rangle_{\bS^1}
+\big\langle e^{i\theta},\ v^{-2}\partial_{y}v\big\rangle_{\bS^1}\\
&=\big\langle e^{i\theta} \partial_{\theta}^2 v^{-2}+ie^{i\theta} \partial_{\theta}v^{-2},\ \partial_{y}v\big\rangle_{\bS^1}.
\end{align*}
Using the estimate
$\|\langle y\rangle^{-1}\partial_{y}v\|_{L^\infty}=\cO(\beta)$ coming from ~\eqref{eq:yyv} and the estimate
$v^{-2}|\partial_{\theta}^2 v|=\cO(\beta^{\frac{33}{20}})$ coming from assumption~\eqref{eq:thetaDe},
which is an output of the first bootstrap machine, we conclude that
$$\|\langle y\rangle^{-1} \tilde{G}_4\|_{L^\infty}\ls\beta^{\frac{53}{20}}.$$ 

To control $\tilde{G}_3$, we combine assumptions in~\eqref{eq:yDe} and \eqref{eq:thetaDe} with
the estimate for $\langle y\rangle^{-1}|\partial_{y}v|$ coming from \eqref{eq:yyv} to get
\begin{equation*}
\|\langle y\rangle^{-2}\tilde{G}_3\|_{L^\infty}\ls\beta^{\frac{63}{20}}.
\end{equation*}

To conclude the second part of the proof, we again use the decomposition of $v$ in equation~\eqref{eqn:split2},
this time to decompose the evolution equation for $\partial_{y}\psi_{+}$. The arguments used here are virtually
identical to those which appear in part \textsc{(i)} of this proof. Hence we omit further details.
\end{proof}

\end{document}